\numberwithin{equation}{section}
\definecolor{light-gray}{gray}{0.95}
\newtheorem{theorem}{Theorem}[section]
\newtheorem{lemma}[theorem]{Lemma}
\newtheorem{proposition}[theorem]{Proposition}
\newtheorem{remark}[theorem]{Remark}
\newtheorem{definition}{Definition}
\newtheorem{open}{Open Question}
\newcommand{\mc}[1]{{\mathcal #1}}
\newcommand{\mf}[1]{{\mathfrak #1}}
\newcommand{\mb}[1]{{\mathbf #1}}
\newcommand{\bb}[1]{{\mathbb #1}}
\newcommand{\eps}{\varepsilon}
\newcommand{\Z}{\mathbb{Z}}
\newcommand{\<}{\langle}
\renewcommand{\>}{\rangle}
\newcommand{\p}{\partial}
\newcommand{\pfrac}[2]{\genfrac{}{}{}{1}{#1}{#2}}
\newcommand\topo[2]{\genfrac{}{}{0pt}{}{#1}{#2}}
\newcommand{\Wd}{\partial W_{\rm\text{diag}}}
\newcommand*\mytriangle{%
\hspace{0.8pt}
\begin{tikzpicture}[line join= round]
\begin{scope}
\draw (0,0)--(0,5pt)--(5pt,5pt)--cycle;
\end{scope}
\end{tikzpicture}}
\newcommand{\mca}{\mathcal{A}}
\newcommand{\A}{\Delta_\alpha}
\newcommand{\B}{\nabla_\alpha}
\newcommand{\Y}{\mathcal{Y}}
\def\centerarc[#1](#2)(#3:#4:#5){\draw[#1] ($(#2)+({#5*cos(#3)},{#5*sin(#3)})$) arc (#3:#4:#5);}
\let\oldtocsection=\tocsection
\let\oldtocsubsection=\tocsubsection
\let\oldtocsubsubsection=\tocsubsubsection
\renewcommand{\tocsection}[2]{\hspace{0em}\oldtocsection{#1}{#2}}
\renewcommand{\tocsubsection}[2]{\hspace{1em}\oldtocsubsection{#1}{#2}}
\renewcommand{\tocsubsubsection}[2]{\hspace{2em}\oldtocsubsubsection{#1}{#2}}
\DeclareRobustCommand{\SkipTocEntry}[5]{}
\begin{document}
\title[Non-equilibrium fluctuations for the SSEP with a slow bond]{Non-equilibrium fluctuations\\ for the SSEP with a slow bond}

\author[D. Erhard]{D. Erhard}
\address{UFBA\\
 Instituto de Matem\'atica, Campus de Ondina, Av. Adhemar de Barros, S/N. CEP 40170-110\\
Salvador, Brazil}
\curraddr{}
\email{erharddirk@gmail.com}
\thanks{}

\author[T. Franco]{T. Franco}
\address{UFBA\\
 Instituto de Matem\'atica, Campus de Ondina, Av. Adhemar de Barros, S/N. CEP 40170-110\\
Salvador, Brazil}
\curraddr{}
\email{tertu@ufba.br}
\thanks{}

\author{P. Gon\c{c}alves}
\address{Center for Mathematical Analysis,  Geometry and Dynamical Systems,
Instituto Superior T\'ecnico, Universidade de Lisboa,
Av. Rovisco Pais, 1049-001 Lisboa, Portugal.}
\curraddr{}
\email{patricia.goncalves@math.tecnico.ulisboa.pt}
\thanks{}

\author{A. Neumann}
\address{UFRGS, Instituto de Matem\'atica e Estat\'istica, Campus do Vale,\newline Av. Bento Gon\c calves, 9500. CEP 91509-900, Porto Alegre, Brasil}
\curraddr{}
\email{aneumann@mat.ufrgs.br}
\thanks{}

\author[M. Tavares]{M. Tavares}
\address{UFBA\\
 Instituto de Matem\'atica, Campus de Ondina, Av. Adhemar de Barros, S/N. CEP 40170-110\\
Salvador, Brazil}
\curraddr{}
\email{tavaresaguiar57@gmail.com}
\thanks{}

\subjclass[2010]{60K35}

\begin{abstract} 
We prove the non-equilibrium fluctuations for the one-dimens-ional symmetric simple exclusion process with a slow bond. This generalizes a result of \cite{fgn2,fgn2corrigendum}, which dealt with the equilibrium fluctuations. The foundation stone of our proof is a precise estimate on the  correlations of the system, and that is by itself one of the main novelties of this paper. To obtain these estimates, we first deduce a spatially discrete PDE for the covariance function and we relate it to the local times of a random walk in a non-homogeneous environment via Duhamel's principle. Projection techniques and coupling arguments reduce the analysis to the problem of studying the local times of the classical random walk. We think that the method developed here can be applied to a variety of models, and we provide a discussion on this matter.\vspace{-1cm}
\end{abstract}

\keywords{Non-equilibrium fluctuations, slowed exclusion, local times of random walks, two point correlation function}

\renewcommand{\subjclassname}{\textup{2010} Mathematics Subject Classification}

\maketitle
\tableofcontents

\section{Introduction}\label{s1}

One of the most challenging problems in the field of interacting particle systems is the derivation of the non-equilibrium fluctuations around the hydrodynamic limit and up to now there is not a satisfactory and robust theory that one can apply successfully. The main difficulty that one faces is to understand the precise  asymptotic behaviour of the long range correlations of the system. To be more precise, when letting the interacting system  start from a general measure (typically a non invariant measure for which the hydrodynamic limit can be obtained),   the correlations between any two sites are not null, but decay to zero as the scaling parameter $n$ grows. 
 
 In many situations a uniform bound on the  correlation function of order $O(1/n)$ is sufficient to obtain the non-equilibrium fluctuations of the system (see \cite{MasiSurvey,Ravi1992} for instance). For the model that we are going to describe in the sequel, the uniform bound on the correlation function happens to be of order $O(\log n/n)$, demanding new efforts both on the derivation of such a bound and on the application of such a bound on the proof of  the non-equilibrium fluctuations.

To be more specific, here we study  the symmetric simple exclusion process (SSEP) evolving on $\bb Z$ when a slow bond is added to it. The dynamics of this model is defined as follows. On $\bb Z$, particles at the vertexes of the  bond $\{x,x+1\}$ exchange positions at rate $1$, except at the particular bond $\{0,1\}$, where the rate of exchange is given by $\alpha/n$, with $\alpha\in(0,+\infty)$. Since the rate at the bond $\{0,1\}$ is slower with respect to the rates at other bonds, the bond $\{0,1\}$ coined the name \emph{slow bond}. Particles move on the one-dimensional lattice according to those rates of exchange and they are not created nor annihilated, being the spatial disposition of particles  the object of interest. 

The investigation on the behaviour of this process was initiated in \cite{fgn1} where the hydrodynamic limit was derived (see also \cite{Franco2010,fgn3}). By this we mean that the density of particles of the system converges to   a function $\rho_t(\cdot)$ which is a weak solution to a partial differential equation, called the \emph{hydrodynamic equation}. For the choice of the rates given above, the corresponding hydrodynamic equation is the one-dimensional heat equation with a boundary condition of Robin type:
\begin{equation}\label{robin_0}
\left\{
\begin{array}{ll}
 \partial_t \rho(t,u)\; = \; \partial^2_{uu} \rho(t,u)\,,\quad \text{ for }u\neq 0, \\
 %&t \geq 0,\, u \in \mathbb T\backslash\{0\}\\
\p_u \rho (t,0^+)\;=\;\p_u \rho(t,0^-)\;=\;\alpha\big[\rho(t,0^+)-\rho(t,0^-)\big] , \\
%&t \geq 0,\\
 \rho(0,u) \;= \; \rho_0(u),
 %u \in \mathbb T.
\end{array}
\right.
\end{equation} 
where $0^+$ and $0^-$ denote the side limits at zero from the right and from the left, respectively.

In fact, in \cite{fgn1} a more general choice for the rates was considered, and three different hydrodynamical behaviours were obtained. There, the slow bond was taken as the bond $\{-1,0\}$ instead of $\{0,1\}$, and the rate of exchange at that bond was given by $\frac{\alpha}{n^\beta}$, with $\beta\geq 0$ and $\alpha$ as given above. The choice of the slow bond as $\{0,1\}$ or $\{-1,0\}$ is essentially a matter of notation, having no special relevance. On the other hand, depending on the range of $\beta$, the boundary conditions of the hydrodynamic equation can be  of Neumann type (when $\beta>1$), which corresponds to \eqref{robin_0} with $\alpha=0$; or there is an absence of boundary conditions (when $\beta\in[0,1))$. The model  we approach  here corresponds to the choice $\beta=1$ in \cite{fgn1}. 

The effect  of the slow bond at a microscopic level is obvious: it narrows down the passage of particles across it. At a macroscopic level, its presence leads to boundary conditions in  the partial differential equation. By looking at the hydrodynamic equation \eqref{robin_0}, we see that the boundary conditions characterize the current of the system through the macroscopic position $u=0$. The boundary conditions state that the current is proportional to the difference of  concentration of the intervals $(0,+\infty)$ and $(-\infty,0)$ near the boundary, which is in agreement with \emph{Fick's Law}.

The \textit{equilibrium fluctuations} for this model were presented in \cite{fgn2} and three different Ornstein-Uhlenbeck processes were obtained, which again had the corresponding boundary conditions as seen at the hydrodynamical level. 
%We also observe that the non-equilibrium fluctuations for the one-dimensional SSEP were first presented in \cite{MasiSurvey} and were extended to higher dimensions in \cite{Ravi1992}. 
%The proof of uniqueness of these Ornstein-Uhlenbeck processes was given in \cite{HolleyStroock} when the test functions do not have any boundary conditions and the proof of uniqueness in the case where the process has Robin boundary conditions, was given in \cite{fgn2}. 
We extend here the results of \cite{fgn2}  by allowing the system to start from any measure and not necessarily from the stationary measure, namely the Bernoulli product measure, as required in \cite{fgn2}. 
%(see \cite{MasiSurvey,Ravi1992} for some of the first results on non-equilibrium fluctuations for the SSEP).
The choice of  rates as described above is restricted to $\beta=1$ so that we are in the Robin's regime. 

As the main theorem, we prove the non-equilibrium fluctuations and show that they are given by an \textit{Ornstein-Uhlenbeck process with Robin boundary conditions}.  By an Ornstein-Uhlenbeck process with  Robin boundary conditions it should be understood, in the same spirit as in \cite{fgn2}, that these boundary conditions are encoded in the space of test functions, see  \eqref{eq:bound_beta_1} below. Microscopically, the role of the boundary conditions at the level of the test functions is to force some additive functionals that appear in the Dynkin martingale to vanish as $n$ grows. If we do not impose the boundary conditions of \eqref{eq:bound_beta_1} on the test functions, then we would need some extra arguments to control those additive functionals. This is left to a future work.

The proof's structure is the standard one in the theory of stochastic processes:  tightness for the sequence of density fields together with uniqueness of  limit points. 
Let us discuss next the features of the work, besides the non-equilibrium result itself. And at same time we give the outline of the paper.

The biggest difficulty we face in our proof  is undoubtedly the fact that the  slow bond decreases the speed at which  correlations vanish. In the usual SSEP, where  all bonds have rate one,  correlations are of order $O(1/n)$. In our case however,  correlations are of order $O(\log n / n)$, therefore  bigger than in the usual SSEP.  For sites on the same side of the slow bond this fact is  intuitive:  correlations should actually increase since it is more difficult for particles to cross the slow bond. Curiously, our proof shows that the same happens for sites at different sides of the slow bond, that is, correlations are of order $O(\log n/n)$ on the entire line. An intuition of why this happens is given in Remark~\ref{rm28}, and a discussion of why the bound $O(\log n / n)$ is sharp is made in Subsection~\ref{rm27}.

In Section~\ref{s2} we define the symmetric simple exclusion process in the presence of a slow bond at $\{0,1\}$, we introduce notations and we state the main results of the article. At the end of this section, three related open problems  are presented.

In Section~\ref{s4} we  establish connections between the two-point correlation function and the discrete derivative with the expected occupation time of a site of two-dimensional and one-dimensional random walks, respectively, in an inhomogeneous medium. This is one of the features: the way itself to estimate correlations via local times of random walks, which we believe may be applied to different contexts. The idea behind that is actually simple. We  express both the discrete derivative and the  correlation function as  solutions to some discrete equations, then we use Duhamel's Principle to write each one of these solutions in terms of transition probabilities of random walks, in 1-d when looking at the discrete derivative and in 2-d when looking at the correlation function. Then,  the local times of these random walks show up naturally from these arguments and we need to establish optimal bounds for them.

Since the necessary estimates for local times of random walks were not yet available in the literature, we derive them in Section~\ref{s5} by means of projection of Markov chains (also known as \textit{lumping}) and couplings. The statements of those estimates may look artificial at first glance, but they naturally appear when one looks for estimates on the discrete derivative of the occupation average at a site and for the two-point correlation function, as aforementioned.

An additional feature  is about uniqueness of  the Ornstein-Uhlenbeck process with  Robin boundary conditions in the non-equilibrium setting, where the variance is governed by the PDE \eqref{robin_0}. Suitably  adapting the proofs of \cite{HolleyStroock,kl}, we give a slightly more general version of uniqueness, which permits to consider more general starting measures than the usual slowly varying product measure. The generalization here consists on supposing that the density field associated to the initial measure does not necessarily converge to a \textit{Gaussian field}, but only to \textit{some field}.
Moreover, this proof of uniqueness has a pedagogical importance, since the original proof of uniqueness for the Ornstein-Uhlenbeck process in the non-equilibrium setting, to the best of our knowledge, is not available in the literature.

Finally, in Section~\ref{s3} we  present the proof of the density fluctuations, which  relies  on the estimates  of the  discrete derivative of expected occupation number at a site, and on the two-point correlation function. A small but important detail is  the fact that the estimate on the discrete derivative is sufficient for our purposes. In previous works (\cite{MasiSurvey,Ravi1992}), the proof of non-equilibrium fluctuations was based on the convergence of the spatially discretized heat equation towards the continuum heat equation.  Such an approximation is quite good, of order $O(n^{-2})$, and quite hard to adapt to the non-homogeneous medium set up without some uniform ellipticity assumption as in \cite{jara2008}.   On the other hand, the discrete derivative estimate for the spatially discretized PDE is much easier to reach, as seen here. This idea on making use of the discrete derivative first appeared in \cite{FGN2018}, but its utility  becomes more evident now.

\section{Statement of results}\label{s2}

\subsection{The model}
We fix a parameter $\alpha>0$, and we consider the symmetric simple exclusion process $\{\eta_t:\,t\geq{0}\}$ with a slow bound as  defined in \cite{fgn1}. More precisely, $\{\eta_t:\,t\geq{0}\}$ is the Markov process with state space $\Omega\overset{\text{def}}{=}\{0,1\}^{\bb Z}$, and infinitesimal generator $\mc L_{n}$ acting on local functions $f:\Omega\rightarrow \bb{R}$ via 
\begin{equation}\label{ln}
\begin{split}
(\mc L_n f)(\eta)\;=\;
\sum_{x\in \bb Z}\xi_{x,x+1}^n\Big(f(\eta^{x,x+1})-f(\eta)\Big)
\end{split}
\end{equation}
where 
\begin{equation}\label{xi}
\xi_{x,x+1}^n \;\overset{\text{def}}{=}\; 
\begin{cases}
 1\,, & \textrm{ if } \; x\neq 0\,,\\
\frac{\alpha}{n}\,, & \textrm{ if }\; x=0\,.\\ 
 \end{cases} 
\end{equation}
Here, for any $x\in\bb Z$, the configuration $\eta^{x,x+1}$ is obtained from $\eta$ by exchanging the occupation variables $\eta(x)$ and $\eta(x+1)$, i.e.,
\begin{equation*}
(\eta^{x,x+1})(y)\;=\;\left\{\begin{array}{cl}
\eta(x+1)\,,& \mbox{if}\,\,\, y=x\,,\\
\eta(x)\,,& \mbox{if} \,\,\,y=x+1\,,\\
\eta(y)\,,& \mbox{otherwise,}
\end{array}
\right.
\end{equation*}
see Figure~\ref{fig1} for an illustration of the jump rates.
Given $\eta\in \{0,1\}^\bb Z$, we then say that the site $x\in\bb Z$ is vacant if $\eta(x)=0$ and  occupied if $\eta(x)=1$.
\begin{figure}[!htb]
\centering
\begin{tikzpicture}
%%%%%%arcos%%%%%%%%%%%%%
\centerarc[thick,<-](1.5,0.3)(10:170:0.45);
\centerarc[thick,->](1.5,-0.3)(-10:-170:0.45);
\centerarc[thick,<-](2.5,0.3)(10:170:0.45);
\centerarc[thick,->](2.5,-0.3)(-10:-170:0.45);
\centerarc[thick,->](3.5,-0.3)(-10:-170:0.45);
\centerarc[thick,<-](3.5,0.3)(10:170:0.45);
\centerarc[thick,->](4.5,-0.3)(-10:-170:0.45);
\centerarc[thick,<-](4.5,0.3)(10:170:0.45);
\centerarc[thick,->](5.5,-0.3)(-10:-170:0.45);
\centerarc[thick,<-](5.5,0.3)(10:170:0.45);
%%%%%%%%%%%%%%segmento%%%%%%%%%
\draw (-1,0) -- (8,0);

%%%%%%%%%% particulas em preto %%%%%%%%%%%
\shade[ball color=black](0,0) circle (0.25);
\shade[ball color=black](1,0) circle (0.25);
\shade[ball color=black](3,0) circle (0.25);
%\shade[ball color=black](5,0) circle (0.25);
\shade[ball color=black](6,0) circle (0.25);

%\shade[ball color=black](8,0) circle (0.25);
%\shade[ball color=black](9,0) circle (0.25);

%%%%%%%%%%% particulas em branco %%%%%%%%%%%
\filldraw[fill=white, draw=black]
(2,0) circle (.25)
(4,0) circle (.25)
(5,0) circle (.25)
(7,0) circle (.25)
;

%%%%%%%%%% rotulos %%%%%%%%%%%%%%
\draw (0.3,-0.05) node[anchor=north] {\small  $\bf -\!3$};
\draw (1.3,-0.05) node[anchor=north] {\small $\bf - 2 $};
\draw (2.3,-0.05) node[anchor=north] {\small $\bf - 1 $};
\draw (3.3,-0.05) node[anchor=north] {\small $\bf 0$};
\draw (4.3,-0.05) node[anchor=north] {\small $\bf 1$};
\draw (5.3,-0.05) node[anchor=north] {\small $\bf 2$};
\draw (6.3,-0.05) node[anchor=north] {\small $\bf 3$};
\draw (7.3,-0.05) node[anchor=north] {\small $\bf 4$};
%\draw (8.3,-0.05) node[anchor=north] {\small $\bf 5$};
%\draw (9.3,-0.05) node[anchor=north] {\small $\bf 6$};
\draw (1.5,0.8) node[anchor=south]{$1$};
\draw (1.5,-0.8) node[anchor=north]{$1$};
\draw (2.5,0.8) node[anchor=south]{$1$};
\draw (2.5,-0.8) node[anchor=north]{$1$};
\draw (3.5,0.8) node[anchor=south]{$\alpha/n$};
\draw (4.5,-0.8) node[anchor=north]{$1$};
\draw (4.5,0.8) node[anchor=south]{$1$};
\draw (5.5,-0.8) node[anchor=north]{$1$};
\draw (5.5,0.8) node[anchor=south]{$1$};
\draw (3.5,-0.8) node[anchor=north]{$\alpha/n$};
\end{tikzpicture}
\caption{Jump rates. The bond $\{0,1\}$ has a particular jump rate associated to it, which is given by $\alpha/n$.}\label{fig1}
\end{figure}
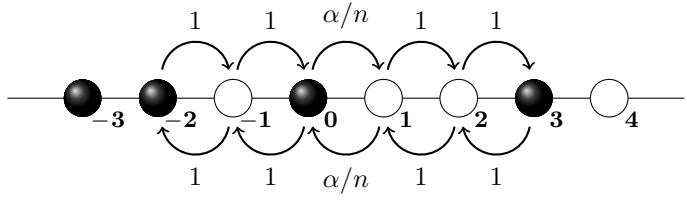

\subsection{Hydrodynamic limit}
Fix a  measurable density profile $ \rho_0: \bb R \rightarrow [0,1 ]$. For each  $n \in \bb N$, let $\mu_n$ be a probability measure on $\Omega$.  We say that the sequence $\{\mu_n\}_{n\in \bb N}$ is \textit{associated} with the profile $\rho_0(\cdot)$ if,  for any $ \delta >0 $ and any continuous function of compact support  $ f: \bb R\to\bb R $, the following holds:
\begin{equation}\label{eq3}
\lim_{n\to\infty}
\mu_{n} \Bigg[ \eta:\, \Big| \frac {1}{n} \sum_{x \in \bb Z} f(\pfrac{x}{n})\, \eta(x)
- \int f(u)\, \rho_0(u)\, du \Big| > \delta\Bigg] \;=\; 0\,.
\end{equation}
Fix $T>0$, and let  $\mc D([0,T],\Omega)$ be  the space of trajectories which are right continuous, with left limits and  taking values in $\Omega$.
Denote by $\bb P_{\mu_{n}} $ the probability measure on $\mc D([0,T],\Omega)$ induced by the SSEP with a slow bond accelerated by $n^2$, i.e., the Markov process with generator $n^2\mc L_n$, and initial measure $\mu_n$. With a slight abuse of notation, we also use the notation $\{\eta_t:t\in[0,T]\}$ for the accelerated process. Denote by $\bb E_{\mu_n}$ the expectation with respect to $\bb P_{\mu_n}$.
In~\cite{fgn1, fgn3} the  \emph{hydrodynamical behaviour} was studied. We note that the process there was studied in \textit{finite volume}, i.e., the model was considered on the discrete torus embedded into the continuous one-dimensional torus. However, since the extension to infinite volume is just a topological issue, the statement below can be obtained via an adaptation of the original approach:
\begin{theorem}[\cite{fgn1, fgn3}]\label{thm21}
Suppose that the 
sequence $\{\mu_n\}_{n\in \bb N}$ is associated to the profile $\rho_0(\cdot)$ in the sense of \eqref{eq3}. 
Then, for each $ t \in [0,T] $, for any $ \delta >0 $ and any continuous function $ f:\bb R\to\bb R $ with compact support, 
\begin{equation*}
\lim_{ n \rightarrow +\infty }
\bb P_{\mu_{n}} \Bigg[\,\eta_{\cdot} : \Big\vert \frac{1}{n} \sum_{x\in \bb Z}
f(\pfrac{x}{n})\, \eta_{t}(x) - \int_{\bb R} f(u)\, \rho(t,u)\, du\, \Big\vert
> \delta\, \Bigg] \;=\; 0\,,
\end{equation*}
 where  $\rho(t,\cdot)$ is the unique weak solution of the 
 heat equation with    Robin boundary conditions given by
\begin{equation}\label{robin}
\left\{
\begin{array}{ll}
 \partial_t \rho(t,u) = \; \partial^2_{uu} \rho(t,u), &t \geq 0,\, u \in \mathbb R\backslash\{0\},\\
\p_u \rho (t,0^+)=\p_u \rho(t,0^-)=\alpha\big[\rho(t,0^+)-\rho(t,0^-)\big],  &t \geq 0,\\
 \rho(0,u) = \; \rho_0(u), &u \in \mathbb R.
\end{array}
\right.
\end{equation}
\end{theorem}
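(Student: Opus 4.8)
The plan is to run the standard scheme for the hydrodynamic limit of a linear, gradient exclusion process — tightness of the empirical measure, identification of the limit points as weak solutions of \eqref{robin}, and uniqueness of such solutions — the only genuinely new point relative to the torus of \cite{fgn1,fgn3} being the passage to the infinite line, which, as already indicated, is essentially a topological matter.

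First I would introduce the empirical measure $\pi^n_t=\tfrac1n\sum_{x\in\mathbb Z}\eta_t(x)\,\delta_{x/n}$, viewed as a random element of $\mathcal D([0,T],\mathcal M_+(\mathbb R))$, where $\mathcal M_+(\mathbb R)$ carries the vague topology. For a test function $G=G_s(u)$ that is $C^1$ in time, $C^2$ in space on each half-line $(-\infty,0]$ and $[0,\infty)$, compactly supported, and subject to $\partial_uG_s(0^+)=\partial_uG_s(0^-)=\alpha\bigl(G_s(0^+)-G_s(0^-)\bigr)$ — the Robin test-function space, in the same spirit as \cite{fgn2} — Dynkin's formula yields that
\begin{equation*}
M^n_t(G)=\langle\pi^n_t,G_t\rangle-\langle\pi^n_0,G_0\rangle-\int_0^t\langle\pi^n_s,\partial_sG_s\rangle\,ds-\int_0^t n^2\mathcal L_n\langle\pi^n_s,G_s\rangle\,ds
\end{equation*}
is a martingale. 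The key computation is that, after summation by parts and isolating the bond $\{0,1\}$,
\begin{equation*}
n^2\mathcal L_n\langle\pi^n_s,G_s\rangle=\langle\pi^n_s,\partial^2_{uu}G_s\rangle+\bigl[\partial_uG_s(0^+)-\alpha\bigl(G_s(0^+)-G_s(0^-)\bigr)\bigr]\bigl(\eta_s(1)-\eta_s(0)\bigr)+O(1/n),
\end{equation*}
with deterministic $O(1/n)$: the two terms of order $n$ produced by the discrete Laplacian at the sites $0$ and $1$ cancel exactly against the order-$n$ part of the slow-bond contribution $(\alpha-n)\bigl(G_s(0)-G_s(1/n)\bigr)\bigl(\eta_s(1)-\eta_s(0)\bigr)$, and the surviving order-one slow-bond term is annihilated precisely by the Robin condition imposed on $G$. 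A routine computation gives $\langle M^n(G)\rangle_t=O(t/n)$ — the slow bond contributing only $O(\alpha t/n)$ to it — so $M^n_t(G)\to0$ in $L^2(\mathbb P_{\mu_n})$. Because the dynamics is linear, no replacement/closure step is needed at this stage.

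Next I would prove tightness. The bound $0\le\eta_t(x)\le1$ gives $\pi^n_t(K)\le|K|+1$ for every compact $K\subset\mathbb R$, uniformly in $n$ and $t$, which is the compact-containment condition in $\mathcal M_+(\mathbb R)$; together with the above identity, which controls $\sup_s|n^2\mathcal L_n\langle\pi^n_s,G_s\rangle|$ by a constant depending only on $G$, Aldous' criterion gives tightness of $(\langle\pi^n_\cdot,G\rangle)_n$ in $\mathcal D([0,T],\mathbb R)$ for each such $G$, hence of $(\pi^n_\cdot)_n$ in $\mathcal D([0,T],\mathcal M_+(\mathbb R))$, with limit points supported on continuous paths. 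For a limit point $\pi$, the density bound forces $\pi_t(du)=\rho(t,u)\,du$ with $0\le\rho\le1$; passing to the limit in the martingale decomposition — using $M^n(G)\to0$, the deterministic error, and the hypothesis \eqref{eq3} (extended from continuous to bounded a.e.-continuous compactly supported functions by approximation, to reach $\langle\pi^n_0,G_0\rangle\to\int G_0\rho_0$) — one obtains that a.s.\ $\rho$ satisfies $\int_\mathbb R G_t\,\rho(t,\cdot)\,du-\int_\mathbb R G_0\,\rho_0\,du=\int_0^t\int_\mathbb R(\partial_s+\partial^2_{uu})G_s\,\rho(s,\cdot)\,du\,ds$ for every Robin test function $G$ and every $t$, which is the weak formulation of \eqref{robin}. (Alternatively one may use test functions smooth across $0$ plus an energy estimate providing the traces $\rho(t,0^\pm)$.) By uniqueness of the weak solution of \eqref{robin} — part of the cited results \cite{fgn1,fgn3} — every limit point equals $\rho_t\,du$, so $(\pi^n_\cdot)_n$ converges in probability to the deterministic path $t\mapsto\rho(t,u)\,du$; testing this against a continuous compactly supported $f$ gives the theorem.

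The step I expect to matter most — though it is not really an obstacle, being already present on the torus — is the exact algebraic cancellation at the slow bond used in the martingale identity: it is what forces the Robin test-function space and is the microscopic mechanism behind the boundary conditions in \eqref{robin}. The points that genuinely need re-examination for the line are topological: the vague topology on $\mathcal M_+(\mathbb R)$, the fact that no mass escapes to infinity (guaranteed by $\eta_t(x)\le1$), and the uniqueness of the weak solution of \eqref{robin} posed on all of $\mathbb R$, which goes through by the same energy argument as on the torus.
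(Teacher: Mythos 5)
The paper gives no proof of Theorem~\ref{thm21}: it is quoted from \cite{fgn1,fgn3}, with only the remark that passing from the torus (where those references work) to the full line is ``just a topological issue.'' So there is no in-text proof to compare against; what you have produced is a reconstruction, and its skeleton (empirical measure, Dynkin martingale, Aldous tightness, identification, PDE uniqueness) is indeed the scheme of \cite{fgn1}.

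The one place where you depart from the cited route is where the Robin condition enters. You build it into the test-function class, so that the boundary defect $\bigl[\partial_uG_s(0^+)-\alpha\bigl(G_s(0^+)-G_s(0^-)\bigr)\bigr]\bigl(\eta_s(1)-\eta_s(0)\bigr)$ cancels algebraically; note this display already uses the constraint $\partial_uG_s(0^+)=\partial_uG_s(0^-)$ to combine the separate defects at sites $0$ and $1$ into a single factor $\eta_s(1)-\eta_s(0)$, so that part of your constraint is doing real work, not just decoration. The reference \cite{fgn1} instead takes test functions smooth across the origin, derives an $H^1$ energy estimate to produce the traces $\rho(t,0^\pm)$, and uses a Replacement Lemma (the one the present paper invokes in the proof of Lemma~\ref{lemma32}) to close the slow-bond term. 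Your Robin-test-function device is closer in spirit to \cite{fgn2} and to the uniqueness argument in Proposition~\ref{prop26} of this paper; it is algebraically cleaner and dispenses with the Replacement Lemma, but it changes the weak formulation of \eqref{robin} that your limit points satisfy. The appeal to ``uniqueness of the weak solution, part of the cited results'' therefore has to be made for \emph{that} formulation (test functions with a Robin jump), not for the energy formulation proved in \cite{fgn1}. One either shows the two formulations equivalent, or one runs the semigroup/Gronwall argument $G_s=T^\alpha_{t-s}f$ directly — but then the test class must be stable under $T^\alpha_t$, which compactly supported $C^2$ functions are not, whereas the paper's $\mc S_\alpha(\bb R)$ is. You flag the alternative energy route in a parenthetical, so the argument is not wrong, but this is the step where a careful write-up would need to commit to one path rather than gesture at both.
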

Here, $\rho(t,0^+)$ and $\rho(t,0^-)$ denote the limit from the right and from the left at zero, respectively. The notation $0^\pm$ will be used throughout the article.

\subsection{Space of test functions and semigroup}
In this section we introduce a space of test functions, that is suitable for our purposes, and which, basically, coincides with the one  in~\cite{fgn2corrigendum}.   Here, functions  are continuous from the left at zero, while in~\cite{fgn2corrigendum} functions are continuous from the right. This subtle difference is due to choice of slow bond's position, which is $\{0,1\}$ here and $\{-1,0\}$ in~\cite{fgn2corrigendum}.

\begin{definition} We denote by $\mc S_\alpha(\bb R )$ the space of functions $f:\bb R\to\bb R$ such that: \begin{enumerate}
\item[(i)]  $f$ is smooth on $\bb R\backslash\{0\}$, i.e. $f \in C^{\infty}(\bb R\backslash\{0\})$,
\item[(ii)]  $f$ is continuous from the left at 0,
\item[(iii)]  for all non-negative integers $k,\ell$, the function $f$ satisfies
\begin{equation}\label{eq2.2}
\Vert f \Vert_{k,\ell}\;:=\; \sup_{u \neq 0} \Big| (1+|u|^\ell) \frac{d^k f}{du^k}(u)\Big| \;< \; \infty\,.
\end{equation}
\item[(iv)] for any integer $k\geq 0$,
\begin{equation} \frac{d^{2k+1} f}{du^{2k+1}}(0^+)\;= \;\frac{d^{2k+1}f}{du^{2k+1}}(0^-)\; =\; \alpha  \Bigg(\frac{d^{2k}f}{du^{2k}}(0^+)-\frac{d^{2k}f}{du^{2k}}(0^-)\Bigg)\,.
\label{eq:bound_beta_1}\end{equation}
\end{enumerate}
Moreover, $\mc S'_\alpha(\bb R)$ denotes the topological dual of $\mc S_\alpha(\bb R)$.
\end{definition}
In plain words, $\mc S_\alpha(\bb R)$ essentially consists of the space of functions in the Schwartz space $\mc S(\bb R)$ that are not necessarily smooth at the origin.
It is a consequence of~\eqref{eq2.2} that $\frac{d^k f}{du^k}(0^+)$ and $\frac{d^k f}{du^k}(0^-)$
exist for all integers $k \ge 0$. As in \cite{fgn2}, one may show that $S_\alpha(\mathbb{R})$ is a Fr\'echet space (this fact was only used when showing tightness, see \cite{Mitoma}). We recall below the explicit  formula for the semigroup that corresponds to the PDE~\eqref{robin}.
\begin{proposition}[\cite{fgn2}]\label{prop23}
Denote by $g_{\text{\rm even}}$ and $g_{\text{\rm odd}}$ the even and odd parts of a function $g:\bb R\to \bb R$, respectively. That is, for $u\in{\mathbb{R}}$,
\begin{equation*}
 g_{\text{\rm even}}(u)=\frac{g(u)+g(-u)}{2}\quad \text{and} \quad g_{\text{\rm odd}}(u)=\frac{g(u)-g(-u)}{2}\,.
\end{equation*}
 The solution of \eqref{robin} with initial condition $g\in\mc S_\alpha(\bb R)$ is given by
  \begin{equation*}
  \begin{split}
  & T_t^\alpha g(u)= \frac{1}{\sqrt{4\pi t}}\Bigg\{\int_{\bb R}
e^{-\frac{(u-y)^2}{4t}} g_{\textrm{{\rm even}}}(y)\,dy \\
    & + e^{2\alpha u}\!\!\int_u^{+\infty} \!\!\! e^{-2\alpha z} \!\!\int_0^{+\infty}\!
\Big[(\pfrac{z-y+4\alpha t}{2t})e^{-\frac{(z-y)^2}{4t}}+(\pfrac{z+y-4\alpha t}{2t})e^{-\frac{(z+y)^2}{4t}}\Big]\,
g_{\textrm{{\rm odd}}}(y)\, dy\, dz\,\Bigg\},\\
  \end{split}
  \end{equation*}
\noindent for $u>0$, and
  \begin{equation*}
  \begin{split}
 & T_t^{\alpha} g(u)= \frac{1}{\sqrt{4\pi t}}\Bigg\{\int_{\bb R}
e^{-\frac{(u-y)^2}{4t}} g_{\textrm{{\rm even}}}(y)\,dy \\
    & - e^{-2\alpha u}\!\!\int_{-u}^{+\infty}\!\!\! e^{-2\alpha z}\!\! \int_0^{+\infty}\!
\Big[(\pfrac{z-y+4\alpha t}{2t})e^{-\frac{(z-y)^2}{4t}}+(\pfrac{z+y-4\alpha t}{2t})e^{-\frac{(z+y)^2}{4t}}\Big]\,
g_{\textrm{{\rm odd}}}(y)\, dy\, dz\,\Bigg\},\\
  \end{split}
  \end{equation*}
\noindent for $u<0$.
\end{proposition}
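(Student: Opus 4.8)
The plan is to \emph{verify} that the displayed expression, call it $\Phi_t g$, is a weak solution of \eqref{robin} with $\Phi_0 g = g$, and then invoke the uniqueness of weak solutions already recorded in Theorem~\ref{thm21}. Write $p(t,u)=\frac{1}{\sqrt{4\pi t}}\,e^{-u^2/(4t)}$ for the standard heat kernel, so that $\frac{u}{2t}\,p(t,u)=-\partial_u p(t,u)$ and $\partial_t p=\partial_{uu}^2 p$. Since \eqref{robin} is linear and $\Phi_t g=\Phi_t^{\rm ev}g_{\text{even}}+\Phi_t^{\rm odd}g_{\text{odd}}$, where $\Phi_t^{\rm ev}g_{\text{even}}=(p(t,\cdot)\ast g_{\text{even}})(u)$ is the first term and $\Phi_t^{\rm odd}g_{\text{odd}}$ is the remainder, it suffices to check that each of these two pieces solves \eqref{robin} with, respectively, datum $g_{\text{even}}$ and $g_{\text{odd}}$.

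For the even piece, $p(t,\cdot)\ast g_{\text{even}}$ solves the heat equation on all of $\bb R$, it is even and smooth, hence has vanishing derivative at $0$ and equal left/right values there; therefore it satisfies the transmission condition in \eqref{robin} trivially (both sides equal $0$) and has initial datum $g_{\text{even}}$. For the odd piece it is enough, by oddness, to analyze $u>0$, where $\Phi_t^{\rm odd}g_{\text{odd}}(u)=\phi(t,u):=e^{2\alpha u}\int_u^{+\infty}e^{-2\alpha z}q(t,z)\,dz$ with
\[
q(t,z)\;=\;\frac{1}{\sqrt{4\pi t}}\int_0^{+\infty}\Big[\tfrac{z-y+4\alpha t}{2t}\,e^{-\frac{(z-y)^2}{4t}}+\tfrac{z+y-4\alpha t}{2t}\,e^{-\frac{(z+y)^2}{4t}}\Big]\,g_{\text{odd}}(y)\,dy .
\]
Using $\frac{z\mp y}{2t}p(t,z\mp y)=-\partial_z p(t,z\mp y)$, and the substitution $y\mapsto-y$ together with the oddness of $g_{\text{odd}}$, one rewrites
\[
q(t,z)\;=\;2\alpha\,H(t,z)-\partial_z N(t,z),
\]
where $H(t,z)=(p(t,\cdot)\ast g_{\text{odd}})(z)$ is the full-line heat solution with odd datum (so $H(t,0)=0$) and $N(t,z)=\int_0^{+\infty}[p(t,z-y)+p(t,z+y)]g_{\text{odd}}(y)\,dy$ is the half-line Neumann solution (so $\partial_z N(t,0)=0$). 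Hence $q(t,0^{+})=0$, and, since $\partial_z$ commutes with the heat operator, $\partial_t q=\partial_{zz}^2 q$ for $z>0$; moreover $q(0^{+},z)=(2\alpha-\partial_z)g_{\text{odd}}(z)$. Now $\partial_u\phi=2\alpha\phi-q(t,u)$, and the transform $q\mapsto e^{2\alpha u}\int_u^{+\infty}e^{-2\alpha z}q\,dz$ intertwines $\partial_{zz}^2$ with $\partial_{uu}^2$ on functions with Gaussian decay (two integrations by parts in $z$, the boundary terms at $+\infty$ vanishing because $g\in\mc S_\alpha(\bb R)$), so $\partial_t\phi=\partial_{uu}^2\phi$ for $u>0$; the identity $\partial_u\phi(t,0^{+})=2\alpha\phi(t,0^{+})$ is exactly $q(t,0^{+})=0$, and letting $t\downarrow0$ in the primitive gives $\phi(0^{+},u)=e^{2\alpha u}\int_u^{+\infty}\!-\partial_z\!\big(e^{-2\alpha z}g_{\text{odd}}(z)\big)dz=g_{\text{odd}}(u)$. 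Odd-reflecting $\phi$ to $u<0$ one gets a function solving the heat equation on $\bb R\setminus\{0\}$ with datum $g_{\text{odd}}$ such that $\partial_u\phi(t,0^{-})=\partial_u\phi(t,0^{+})=2\alpha\phi(t,0^{+})=\alpha[\phi(t,0^{+})-(-\phi(t,0^{+}))]$, i.e. it satisfies the transmission condition of \eqref{robin}. Adding the even piece and applying uniqueness from Theorem~\ref{thm21} identifies $\Phi_t g$ with the solution; specializing the odd reflection to $u<0$ reproduces the second displayed formula.

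The main obstacle is the odd part: one must manage the $e^{2\alpha u}\int_u^{+\infty}e^{-2\alpha z}(\cdots)\,dz$ structure carefully — in particular controlling the boundary terms at $z=+\infty$ in the integrations by parts via the Gaussian decay inherited from $g$ — and, because the solution \emph{jumps} across the origin, one must check that the odd extension genuinely reproduces the transmission condition $\partial_u\rho(t,0^{\pm})=\alpha[\rho(t,0^{+})-\rho(t,0^{-})]$ rather than a naive Neumann or Dirichlet condition at $0$. An essentially equivalent route is to first derive the half-line Robin heat kernel $K(t,x,y)=p(t,x-y)+p(t,x+y)-4\alpha\int_0^{+\infty}e^{-2\alpha w}p(t,x+y+w)\,dw$ (checking $\partial_x K(t,0,y)=2\alpha K(t,0,y)$ and $K(0^{+},x,y)=\delta(x-y)$) and then show that $\int_0^{+\infty}K(t,u,y)g_{\text{odd}}(y)\,dy$ coincides with $\phi(t,u)$; this boils down to the same integrations by parts.
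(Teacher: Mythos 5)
The paper does not actually prove Proposition~\ref{prop23}; it is imported verbatim as a cited result from~\cite{fgn2}, so there is no in-paper proof to compare your argument against. Your verification is nevertheless correct and self-contained. The decisive structural observation is the identity $q(t,z)=2\alpha H(t,z)-\partial_z N(t,z)$, where $H=p(t,\cdot)\ast g_{\text{odd}}$ is the full-line (odd/Dirichlet) solution and $N$ is the half-line Neumann solution, which forces $q(t,0^+)=0$; combined with $\partial_u\phi=2\alpha\phi-q$ this produces precisely the Robin transmission relation, while the elementary integration-by-parts computation showing that the map $q\mapsto e^{2\alpha u}\int_u^{\infty}e^{-2\alpha z}q\,dz$ intertwines $\partial_{zz}^2$ with $\partial_{uu}^2$ (up to Gaussian-decay boundary terms at $+\infty$) yields the interior heat equation, and the $t\downarrow 0$ computation via the primitive $-\partial_z(e^{-2\alpha z}g_{\text{odd}}(z))$ recovers the initial datum. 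The one slightly loose step is the appeal to Theorem~\ref{thm21} for uniqueness: as stated that theorem concerns density profiles $\rho_0:\bb R\to[0,1]$, whereas $g\in\mc S_\alpha(\bb R)$ need not take values in $[0,1]$. This is a gap of attribution rather than substance — the standard $L^2$ energy estimate for the heat equation on $\bb R\setminus\{0\}$ with the Robin transmission condition gives uniqueness for Schwartz-class data with essentially no change — but if you want the argument to be airtight you should either state and prove that uniqueness lemma explicitly or cite a reference that covers signed, rapidly decaying initial data.
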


The next proposition connects $T_t^\alpha$ with the space of test functions $\mc S_\alpha(\bb R)$.

\begin{proposition}[\cite{fgn2corrigendum}] The operator $T_t^\alpha$ 
defines a semigroup $T_t^\alpha:\mc S_\alpha(\bb R) \to\mc S_\alpha(\bb R)$. That is, for any given $g\in \mc S_\alpha(\bb R)$ and any time $t>0$, the solution $T_t^\alpha g$ of the PDE \eqref{robin} starting from $g$  also belongs to $\mc S_\alpha(\bb R)$.
\end{proposition}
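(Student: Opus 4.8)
The plan is to fix $t>0$ and $g\in\mc S_\alpha(\bb R)$ and verify the four defining conditions (i)--(iv) of $\mc S_\alpha(\bb R)$ for the function $f:=T_t^\alpha g$, reading off its expression from Proposition~\ref{prop23}; the algebraic semigroup identities will be handled at the end. Conditions (i) and (ii) are the soft ones. By Proposition~\ref{prop23} together with the uniqueness assertion of Theorem~\ref{thm21}, $f$ is precisely the solution of \eqref{robin}, hence it solves the heat equation on $\bb R\setminus\{0\}$; interior parabolic regularity then gives $f\in C^\infty(\bb R\setminus\{0\})$, which is (i). (Alternatively, one differentiates the two branches of the formula under the integral sign, the integrands being smooth in $u\neq 0$ and the integrals absolutely convergent thanks to the Gaussian factors.) For (ii), the same differentiation shows that all one-sided derivatives $\partial_u^k f(0^+)$ and $\partial_u^k f(0^-)$ exist and are finite, so one simply \emph{defines} $f(0):=f(0^-)$, whereupon left-continuity at $0$ holds by construction.

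For the Schwartz-type decay (iii) I would write $f=P_t g_{\mathrm{even}}+R_tg$, where $P_t$ is the usual heat semigroup acting on $g_{\mathrm{even}}(u)=\tfrac12(g(u)+g(-u))$ and $R_tg$ is the remaining double integral in Proposition~\ref{prop23}. Since $g_{\mathrm{even}}$ is bounded with all its derivatives bounded and rapidly decreasing by \eqref{eq2.2}, splitting the convolution defining $\partial_u^k P_tg_{\mathrm{even}}(u)$ according to $|y|\le|u|/2$ and $|y|>|u|/2$ shows $\|P_tg_{\mathrm{even}}\|_{k,\ell}<\infty$ for all $k,\ell$. For the remainder and $u>0$, I would substitute $z=u+w$ with $w\ge 0$, which turns the delicate prefactor $e^{2\alpha u}\int_u^{+\infty}e^{-2\alpha z}(\cdots)\,dz$ into $\int_0^{+\infty}e^{-2\alpha w}(\cdots)\,dw$; the bracket is then a finite sum of terms of the form (polynomial)$\times$(Gaussian) in $z-y$ or $z+y$ integrated against the rapidly decreasing $g_{\mathrm{odd}}$, hence decays faster than any power of $z=u+w\ge u$, uniformly in $w\ge0$, while $\int_0^{+\infty}e^{-2\alpha w}\,dw<\infty$. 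Differentiating in $u$ only brings down further polynomial$\times$Gaussian factors, so $\|R_tg\|_{k,\ell}<\infty$ on $\{u>0\}$; the case $u<0$ is symmetric. This gives (iii).

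The crux, and the step I expect to be the real obstacle, is the Robin-type condition (iv). For $k=0$ it is exactly the boundary condition in \eqref{robin}, satisfied by $f$ because $f$ is the solution. For $k\ge 1$ the plan rests on two points. First, $\mc S_\alpha(\bb R)$ is closed under $\partial_u^2$: if $g\in\mc S_\alpha(\bb R)$ then $h:=g''$ (extended to $0$ by $h(0):=g''(0^-)$) satisfies (i)--(iii) at once, while (iv) for $h$ at level $j$ is literally \eqref{eq:bound_beta_1} for $g$ at level $j+1$, which holds; iterating, $g^{(2k)}\in\mc S_\alpha(\bb R)$ for every $k$. Second, $\partial_u^2T_t^\alpha g=T_t^\alpha(g'')$: the left-hand side equals $\partial_tT_t^\alpha g$ by the heat equation, it still solves the heat equation on $\bb R\setminus\{0\}$, and it satisfies the same Robin boundary condition, obtained by differentiating in $t$ the boundary identities of \eqref{robin} for $T_t^\alpha g$; since moreover $\partial_tT_t^\alpha g\to g''$ as $t\to0^+$ (visible from the explicit formula, or because $g$, satisfying \eqref{eq:bound_beta_1} with $k=0$, lies in the domain of the generator of $T_t^\alpha$, on which it acts as $\partial_u^2$), it is the solution of \eqref{robin} with initial datum $g''$, and uniqueness — Theorem~\ref{thm21}, whose proof extends to bounded initial data — forces $\partial_u^2T_t^\alpha g=T_t^\alpha(g'')$. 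Iterating yields $\partial_u^{2k}T_t^\alpha g=T_t^\alpha(g^{(2k)})$, and applying the already proven $k=0$ case of (iv) to $g^{(2k)}$ in place of $g$ gives, after writing $\partial_u^{2k+1}=\partial_u\circ\partial_u^{2k}$, precisely \eqref{eq:bound_beta_1} for $f$ at level $k$. This completes the verification that $T_t^\alpha g\in\mc S_\alpha(\bb R)$; and once this invariance is known, $T_0^\alpha=\mathrm{Id}$ is clear and $T_{t+s}^\alpha=T_t^\alpha T_s^\alpha$ on $\mc S_\alpha(\bb R)$ follows again from uniqueness for \eqref{robin}, since $T_{t+s}^\alpha g$ and $T_t^\alpha(T_s^\alpha g)$ both solve it with datum $T_s^\alpha g$.
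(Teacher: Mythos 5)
The paper does not supply a proof of this proposition; it is quoted from \cite{fgn2corrigendum}, so there is no in-text argument to compare against. Evaluated on its own terms, your route is sound in outline, and the pivotal idea is the right one: deduce \eqref{eq:bound_beta_1} for $T_t^\alpha g$ at every level $k$ from the $k=0$ boundary condition of \eqref{robin} together with closure of $\mc S_\alpha(\bb R)$ under $\partial_u^2$ and the commutation $\partial_u^2 T_t^\alpha g = T_t^\alpha(g'')$. Your bookkeeping that \eqref{eq:bound_beta_1} for $g''$ at level $j$ is \eqref{eq:bound_beta_1} for $g$ at level $j+1$ is correct, as is the reduction of the level-$k$ condition to the level-$0$ condition applied to $g^{(2k)}$. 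A small remark on (iii): the decomposition $f = P_t g_{\mathrm{even}} + R_t g$ together with the substitution $z=u+w$ is clean; one may even observe that \eqref{eq:bound_beta_1} forces every odd-order one-sided derivative of $g_{\mathrm{even}}$ to vanish at $0$ and every even-order pair to match, so $g_{\mathrm{even}}$ is a genuine Schwartz function and $P_t g_{\mathrm{even}}\in\mc S(\bb R)$ without splitting the convolution.

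Two steps in your proof of the commutation identity would need to be made honest before the argument is complete. First, you invoke Theorem~\ref{thm21} for uniqueness, but that theorem is phrased as a hydrodynamic-limit statement for density profiles $\rho_0:\bb R\to[0,1]$; what you actually need is uniqueness of weak solutions of \eqref{robin} for initial data in $\mc S_\alpha(\bb R)$ (in particular for the signed function $g^{(2k)}$), and you also need to check that $\partial_t T_t^\alpha g$ is a weak solution in the relevant sense. Your ``whose proof extends to bounded initial data'' is plausible but is an assertion, not an argument. Second, the initial-condition claim $\partial_t T_t^\alpha g \to g''$ as $t\to 0^+$, with the correct one-sided limits at $0$, is the crux and is not a freebie: the parenthetical appeal to ``the generator of $T_t^\alpha$, on which it acts as $\partial_u^2$'' presupposes strong continuity of $T_t^\alpha$ on a space containing $\mc S_\alpha(\bb R)$ and identification of the generator there, which is essentially the content of the proposition you are proving. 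The safer route, which you flag but do not carry out, is to differentiate the explicit kernel of Proposition~\ref{prop23} twice in $u$ (using $\partial_z^2 K_t(z,y)=\partial_y^2 K_t(z,y)$ and integrating by parts against $g_{\mathrm{odd}}$, tracking the $y=0$ boundary terms, which are nonzero since $g_{\mathrm{odd}}(0^+)=\tfrac12(g(0^+)-g(0^-))$ need not vanish) and verify directly that the result is the kernel applied to $g''$. Filling these two steps would turn your sketch into a complete proof.
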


\begin{definition}\label{def:laplacian_operator}
Let $\Delta_\alpha: \mc S_\alpha(\bb R)\rightarrow \mc S_\alpha(\bb R)$   be the Laplacian on $\mc S_\alpha(\bb R)$, i.e., for any $f\in \mc S_\alpha(\bb R)$, 
\begin{equation}\label{laplacian}
\Delta_\alpha f(u)\;=\;\left\{\begin{array}{cl}
\partial_{uu}^2 f(u)\,,& \mbox{if}\,\,\,u\neq0\,,\smallskip\\
\partial_{uu}^2 f(0^+)\,,& \mbox{if} \,\,\,u=0\,.\\
\end{array}
\right.
\end{equation}
The definition of the operator
$\nabla_\alpha: \mc S_\alpha(\bb R)\rightarrow C^{\infty}[0,1]$ is analogous.
\end{definition}

\subsection{Discrete derivatives and covariance estimatives}
Fix an initial measure $\mu_n$ on $\Omega$. For $x\in\bb Z$  and $t\geq 0$, let 
\begin{equation}\label{rho_t}
\rho^n_t(x)\;\overset{\text{def}}{=}\;\mathbb{E}_{\mu_n}\big[\eta_{t}(x)\big]\,.
\end{equation}
 A simple computation shows that $\rho_t^n(\cdot)$ is a solution of the discrete equation 
\begin{equation}\label{disc_heat}
 \partial_t \rho_t^n(x) \;= \; \big(n^2\mca_n \rho_t^n\big)(x)\,, \;\; x\in\bb Z\,,\;\;t \geq 0\,,
\end{equation}
 where the operator $\mca_n$ acts on functions $f:\bb Z\to\bb R$ as
\begin{equation}\label{op_B}
(\mca_nf)(x)\;:=\;\xi_{x,x+1}^n\Big(f(x+1)-f(x)\Big)+\xi_{x-1,x}^n\Big(f(x-1)-f(x)\Big)\,,~~\forall\,  x\in \bb Z\,,
\end{equation}
with $\xi_{x,x+1}$ as defined in \eqref{xi}.
\begin{definition}\label{defcorr}
For $x,y \in \bb Z$, and $t\in[0,T]$,  define the two-point correlation function
\begin{equation}
\varphi_t^n(x,y)\;\overset{\text{\rm def}}{=}\;\bb E_{\mu_n}\big[\eta_t(x)\eta_t(y)\big]-\rho_t^n(x)\rho_t^n(y)\,.
\end{equation}
\end{definition}
We now state two results that are fundamental for the study of density fluctuations, which are interesting by themselves.
\begin{theorem}[Discrete derivative estimate]\label{prop24}
Assume that there exists a constant $c > 0$ that does not depend on $n$ such that
\begin{equation}\label{assumptionone}
\sup_{ x\in \bb Z}|\rho_0^n (x)-\rho_0\big(\pfrac{x}{n}\big)|\;\leq\;\frac{c}{n}\,.
\end{equation}
Then, there exists $\overline{c}>0$ such that, for all $t\in[0,T]$, and all $n\in\bb N$, 
\begin{equation*}
\big|\rho_t^n(x+1)-\rho_t^n(x)\big|\;\leq\;
\begin{cases}
 \frac{\overline{c}}{n}, &\textrm{if }x\neq 0,\\
\overline{c}, &\textrm{if }x =0.
\end{cases}
\end{equation*}
\end{theorem}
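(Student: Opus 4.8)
## Proof proposal

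The plan is to write the solution $\rho^n_t$ of the discrete heat equation~\eqref{disc_heat} via Duhamel's formula relative to the \emph{homogeneous} discrete Laplacian, treating the slow bond as a perturbation, and then to estimate the resulting expressions by identifying them with occupation times of a random walk. Concretely, let $\Delta_n$ denote the standard (rate-one everywhere) discrete Laplacian on $\bb Z$, so that $\mca_n = \Delta_n - (1-\tfrac\alpha n)\,\Delta^{\{0,1\}}$, where $\Delta^{\{0,1\}}$ is the one-bond Laplacian supported on $\{0,1\}$, i.e. $(\Delta^{\{0,1\}} f)(x) = \big(f(1)-f(0)\big)\mathbf 1_{\{x=0\}} + \big(f(0)-f(1)\big)\mathbf 1_{\{x=1\}}$. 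Writing $P_t^n = e^{tn^2\mca_n}$ for the semigroup of the full dynamics and $P_t = e^{tn^2\Delta_n}$ for the homogeneous one, Duhamel's principle gives
\begin{equation*}
\rho^n_t = P_t \rho_0^n + \Big(1-\tfrac{\alpha}{n}\Big) n^2 \int_0^t P_{t-s}^n\, \Delta^{\{0,1\}} P_s \rho_0^n\, ds,
\end{equation*}
or a symmetric variant with the roles of $P^n$ and $P$ swapped. The key observation is that $\Delta^{\{0,1\}} g$ depends on $g$ only through the single discrete derivative $g(1)-g(0)$, so the correction term is really one-dimensional.

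First I would treat the unperturbed term $P_t\rho_0^n$. Using assumption~\eqref{assumptionone} to replace $\rho_0^n$ by the smooth profile $\rho_0(\cdot/n)$ up to an $O(1/n)$ error, and the standard fact that the homogeneous discrete heat semigroup accelerated by $n^2$ converges to the continuum heat semigroup with local-CLT error bounds, one gets that the discrete gradient of $P_t\rho_0^n$ is $O(1/n)$ uniformly in $x$ and $t\in[0,T]$: indeed the continuum solution is smooth, so $\partial_u (T_t \rho_0)$ is bounded, and the discrete gradient is $\tfrac1n$ times (a discretization of) this derivative. Here one must be slightly careful because $\rho_0$ need only be measurable; but $P_{t}$ for $t$ bounded away from $0$ smooths everything, and for small $t$ one uses the gradient bound on the discrete heat kernel $\sum_z |p_{n^2 t}(x+1,z) - p_{n^2 t}(x,z)| = O(1/n)$ directly together with $\|\rho_0^n\|_\infty \le 1$. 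So the bulk estimate for the first term is essentially routine.

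The heart of the matter is the correction term. Taking a discrete gradient in $x$ and using $|\rho_0^n(1)-\rho_0^n(0)|\le \bar c$ (which follows from \eqref{assumptionone}, or bootstrapping: it is the $x=0$ case we are proving, so one must iterate carefully — see below), one is left to control
\begin{equation*}
n^2 \int_0^t \big[ \nabla_x P_{t-s}^n(x,0) - \nabla_x P_{t-s}^n(x,1)\big]\, \big(\rho_s^n(1)-\rho_s^n(0)\big)\, ds.
\end{equation*}
For $x\neq 0$ the gradient $\nabla_x P^n_{t-s}(x,\cdot)$ carries a factor $O(1/n)$ away from the slow bond, while the time-integral of $n^2 P^n_{t-s}(x,0)$ is the expected local time at $0$ of the accelerated walk started at $x$, which for a one-dimensional walk on $\bb Z$ up to time $T n^2$ is $O(n)$. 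Multiplying, $O(1/n)\cdot O(\bar c)\cdot$(something): one has to track constants so that the $\bar c$ on the right does not blow the estimate up, which forces setting up a Grönwall/fixed-point argument: define $R_n(t) := \sup_{s\le t} |\rho^n_s(1)-\rho^n_s(0)|$ and $D_n(t) := \sup_{s\le t}\sup_{x\neq 0} n|\rho^n_s(x+1)-\rho^n_s(x)|$, derive coupled inequalities $D_n(t) \le C_1 + C_2 \int_0^t R_n(s)\,ds$ and $R_n(t) \le C_3 + C_4 \int_0^t R_n(s)\, ds$ (the second from evaluating the Duhamel formula at the bond, where $n^2 P^n_{t-s}(0,0)$ and $n^2 P^n_{t-s}(1,0)$ time-integrate to $O(n)$ but are multiplied by the slow-bond prefactor $\tfrac\alpha n$, producing an $O(1)$ kernel acting on $R_n$), and conclude $R_n(t), D_n(t) = O(1)$ and $O(1)$ respectively by Grönwall. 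This simultaneously yields both cases of the theorem.

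The main obstacle I anticipate is making the random-walk local-time estimates uniform and clean in the presence of the slow bond: one needs that the expected occupation time at site $0$ up to time $Tn^2$, for the \emph{slowed} walk (generator $n^2\mca_n$), started anywhere, is $O(n)$ (not larger), and similarly that $\sum_{s} n^2 |\nabla_x P^n_s(x,0)|$ is $O(1)$ for $x$ away from $0$. Intuitively the slow bond only \emph{reduces} the time spent crossing, so these should hold with the same order as for the homogeneous walk, but proving it requires either a coupling of the slowed walk with the ordinary walk (the slowed walk is a time-change / lazy version near the bond) or a direct reflection/lumping argument as the authors announce in Section~\ref{s5}. Since this local-time input is exactly what the paper develops in Section~\ref{s5}, I would simply invoke it; modulo that, the Duhamel-plus-Grönwall scheme above closes the proof.
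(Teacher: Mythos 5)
Your plan decomposes $\mca_n=\Delta_n-(1-\tfrac{\alpha}{n})\Delta^{\{0,1\}}$ and perturbs around the \emph{homogeneous} discrete heat flow $P_t\rho_0^n$, closing with a Gr\"onwall/fixed-point in the two quantities $R_n,D_n$. This is a genuinely different route from the paper's, which instead introduces $\gamma_t^n(x)=\rho_t^n(x)-\rho_t(\tfrac{x}{n})$ (the error to the \emph{Robin} PDE solution $\rho_t$), shows $\gamma$ solves a discrete heat equation driven by the discretization error $F_t^n$ (which is $O(1/n^2)$ off $\{0,1\}$, $O(1)$ on $\{0,1\}$ by the Robin boundary condition), applies Duhamel with the \emph{slowed} walk $\mb X$, and uses the 1-d local-time bound of Lemma~\ref{prop42} to get $\sup_x|\gamma_t^n(x)|=O(1/n)$; the theorem then follows by adding the $O(1/n)$ spatial increment of the smooth $\rho_t$.

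There is, however, a genuine gap in your approach near the bond, and I do not think it closes as sketched. First, your claim that ``the bulk estimate for the first term $P_t\rho_0^n$ is essentially routine'' fails exactly where it matters: when $\rho_0$ carries a jump at $0$ (the natural and relevant case), the homogeneous heat flow smooths it only at rate $\sqrt t$, so $\big|\nabla^+_x(P_t\rho_0^n)(x)\big|$ is of order $1/(n\sqrt t)$ for $x$ near $0$ and small $t$ --- not $O(1/n)$ uniformly in $t\in[0,T]$. (Contrast with the paper: the \emph{Robin} solution $\rho_t$ has bounded one-sided spatial derivatives uniformly down to $t=0^+$ precisely because the boundary condition pins $\partial_u\rho_t(0^\pm)=\alpha[\rho_t(0^+)-\rho_t(0^-)]$; that is why comparing to $\rho_t$ rather than to $P_t\rho_0^n$ is what makes the bulk term truly routine.) Consequently, the correction integral must \emph{cancel} this blow-up, and that cancellation is not visible in a crude bound. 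Indeed, after the gradient is taken, the kernel of the correction term is $\nabla^+_x\big[P_{t-s}(\cdot,0)-P_{t-s}(\cdot,1)\big](x)=\partial_v p_v(x)\big|_{v=n^2(t-s)}$, which has $\int_0^\infty\partial_v p_v(x)\,dv=0$ for $x\neq 0$ but $\int_0^\infty|\partial_v p_v(x)|\,dv=O(1/|x|)$, i.e.\ $O(1)$ for $|x|=1$. Bounding $|R_n|$ by $\|R_n\|_\infty$ and pulling out the $L^1$ norm of the kernel therefore gives only $O(1)$, not $O(1/n)$, so the naive ``(gradient $O(1/n)$)$\times$(local time $O(n)$)'' heuristic is not correct pointwise and the Gr\"onwall inequalities you propose, $D_n(t)\le C_1+C_2\int_0^tR_n$ and $R_n(t)\le C_3+C_4\int_0^tR_n$, do not arise in this form. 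To make your route rigorous one would have to exploit the vanishing of $\int_0^\infty\partial_v p_v(x)\,dv$ together with a quantitative modulus of continuity of $s\mapsto R_n(s)$, which is itself of the same order as $D_n$ --- a more delicate bootstrap than what you outlined, and one that would additionally have to absorb the $1/(n\sqrt t)$ singularity of the unperturbed term. The paper's choice to subtract the Robin solution from the start sidesteps all of this by construction.
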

Note that the second inequality above is obvious, but we kept in the statement of the theorem for the sake of clarity. 
\begin{theorem}[Correlation estimate]\label{Prop25}
Assume that there exists a constant $c > 0$ that does not depend on $n$ such that 
\begin{equation}\label{ass2}
\sup_{ (x,y)\in V}|\varphi_0^n (x,y)|\;\leq\;\frac{c}{n}\,.
\end{equation} 
Moreover, assume that~\eqref{assumptionone} is satisfied. Then, there exists $\hat{c}>0$ such that for all $n\in\bb N$,
\begin{equation}\label{eq:covarianceest}
\sup_{t\leq T}\sup_{(x,y)\in V} |\varphi_t^n(x,y)|\;\leq\; \frac{\hat{c}\log n}{n}\,,
\end{equation}
where $V:=\{(x,y)\in \bb Z\times \bb Z:\,y\geq x+1\}$.
\end{theorem}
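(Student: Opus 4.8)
The plan is to close an evolution equation for $\varphi_t^n$ on $V$, solve it by Duhamel's principle in terms of a two-dimensional random walk, and reduce \eqref{eq:covarianceest} to estimates on the time this walk spends on the diagonal and at a single exceptional site. \emph{Step 1 (closed equation).} A direct computation with the generator $n^2\mc L_n$ shows that, on $V$,
\[
\partial_t\varphi_t^n(x,y)\;=\;n^2\,(\mathbb L_n\varphi_t^n)(x,y)\;+\;F_t^n(x,y)\,,
\]
where $\mathbb L_n$ is the generator of two independent copies of the nearest-neighbour walk with rates \eqref{xi}, restricted to $V$ in the sense that any jump putting the two walkers on the same site is suppressed, and $F_t^n$ is supported on the diagonal, with $F_t^n(x,x+1)=-\,n^2\,\xi_{x,x+1}^n(\rho_t^n(x+1)-\rho_t^n(x))^2$. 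Indeed, for $y\ge x+2$ the right-hand side is just the two-dimensional discrete Laplacian with rates \eqref{xi} acting in each variable, with no source; on the diagonal the exchange across $\{x,x+1\}$ leaves $\eta(x)\eta(x+1)$ unchanged but still produces the term $n^2\xi_{x,x+1}^n(\rho_t^n(x+1)-\rho_t^n(x))^2$ in $\partial_t(\rho_t^n(x)\rho_t^n(x+1))$.

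\emph{Step 2 (Duhamel and reduction to local times).} By Theorem~\ref{prop24} (which uses \eqref{assumptionone}), $|\rho_t^n(x+1)-\rho_t^n(x)|\le \overline c/n$ for $x\ne 0$, while trivially $(\rho_t^n(1)-\rho_t^n(0))^2\le1$; hence $|F_t^n(x,x+1)|\le \overline c^{\,2}$ for $x\ne0$ and $|F_t^n(0,1)|\le \alpha n$. The decisive feature is that one diagonal site, $(0,1)$, carries a source of order $n$ (coming from the slow bond), and everywhere else the source is uniformly bounded. Since $n^2\mathbb L_n$ is a bounded operator on $\ell^\infty(V)$ generating the Markov semigroup $P_t^n=e^{t n^2\mathbb L_n}$ of a conservative chain $\{Z_u\}$ on $V$, one has $\varphi_t^n=P_t^n\varphi_0^n+\int_0^t P_{t-s}^n F_s^n\,ds$. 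As $P_t^n$ is an $\ell^\infty$-contraction, the first term is $\le\|\varphi_0^n\|_\infty\le c/n$ by \eqref{ass2}; for the second, using positivity of $P_t^n$ and writing $D:=\{(x,x+1):x\in\bb Z\}$,
\[
\Big|\int_0^t (P_{t-s}^n F_s^n)(z)\,ds\Big|\;\le\;\overline c^{\,2}\,\bb E_z\!\Big[\int_0^t\mathbf 1\{Z_u\in D\}\,du\Big]\;+\;\alpha n\,\bb E_z\!\Big[\int_0^t\mathbf 1\{Z_u=(0,1)\}\,du\Big]\,,
\]
uniformly in $z\in V$. Thus \eqref{eq:covarianceest} follows as soon as, uniformly in $z$,
\[
\bb E_z\!\Big[\int_0^t\mathbf 1\{Z_u\in D\}\,du\Big]\le\frac{C}{n}\,,\qquad \bb E_z\!\Big[\int_0^t\mathbf 1\{Z_u=(0,1)\}\,du\Big]\le\frac{C\log n}{n^2}\,.
\]

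\emph{Step 3 (the local time estimates; the main obstacle).} These two bounds are the heart of the matter and give exactly the orders claimed, see Remark~\ref{rm28}: the distance of $Z_u$ to $D$ behaves like a one-dimensional walk run at speed $n^2$, so it visits $0$ for a total time $O(1/n)$ before time $t$; and once on $D$ the position along $D$ is spread over $O(n)$ sites, so the logarithmic recurrence of the planar walk turns this into $O(\log n/n^2)$ for the single site $(0,1)$ --- which, multiplied by the order-$n$ source there, is precisely the mechanism producing the $\log n$ in \eqref{eq:covarianceest}. The difficulty is that $Z$ is not translation invariant: the rates \eqref{xi} create two ``slow barriers'' in the plane meeting exactly at $(0,1)$, and $Z$ lives on the half-space $\{y\ge x+1\}$ with a reflecting-type constraint on the diagonal, so a planar local central limit theorem is not directly available. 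I would prove the two displays in Section~\ref{s5} by lumping $Z$ onto suitable lower-dimensional chains --- the distance to $D$, and the one-dimensional slowed walk --- and coupling those with the classical random walk, for which the needed local-time bounds can be obtained by hand; carrying this out uniformly in the starting point and with no loss beyond the single logarithm is the technical core.
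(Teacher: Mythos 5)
Your Steps 1 and 2 reproduce exactly the paper's argument (Proposition~\ref{prop:decorrprop}): the two-point function solves a discrete Duhamel equation driven by a source supported on $D$, with $|F_t^n(x,x+1)|=O(1)$ for $x\neq 0$ and $|F_t^n(0,1)|=O(n)$ by Theorem~\ref{prop24}, and the $\ell^\infty$-contractivity and positivity of the semigroup of $\mb B_n$ reduce \eqref{eq:covarianceest} to the two expected-occupation-time estimates you display, which after the change of time scale $u=s/n^2$ are precisely the bounds in Proposition~\ref{prop:2dlocaltimes}. The bookkeeping that makes the order-$n$ source at $(0,1)$ produce a $\log n/n$ (while the order-one source on the rest of $D$ produces an $O(1/n)$) is also exactly the paper's.

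Step 3 is where the genuine gap lies, and it is not just a matter of filling in routine details. You propose to ``lump $Z$ onto the distance to $D$'' and then compare with a classical walk, but the lumping hypothesis~\eqref{eq46} of Proposition~\ref{prop:lumping} fails for the walk with generator $\mb B_n$: two states $(0,y)$ with $y\ge 2$ and $(x,x+2)$ with $x$ far from the origin have the same distance to $D$, yet the first has a slow edge of rate $\alpha/n$ emanating from it and the second does not, so the rates out of these two states do not project consistently and $Y-X$ is not a Markov chain. Consequently the ``the distance to $D$ behaves like a 1D walk'' heuristic cannot be made rigorous by a single lumping. What the paper actually does is first remove the inhomogeneity before lumping: Lemma~\ref{lem:crossings} shows that the expected number of crossings of slow edges up to time $Tn^2$ is bounded \emph{uniformly} in the starting point, which lets one decompose the trajectory into excursions; between consecutive crossings $Z$ is coupled (Proposition~\ref{prop:monotonicity}) with the \emph{homogeneous} reflected walk on the triangle $W$, and it is only this homogeneous walk that is lumped (Lemma~\ref{lem:localtimetriangle}, by two successive reflections) onto the classical planar walk, for which the local times on $D$ and at a single site are handled by Propositions~\ref{prop:1dwalk} and~\ref{prop:2dwalk}. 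The bounded-crossing lemma and the attendant excursion/coupling decomposition are the nonobvious ingredients your sketch is missing; without them, a direct projection or a naive comparison with the free walk does not give a bound uniform in the starting point, which is exactly the difficulty you flagged at the end.
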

\begin{remark}
Note that by the symmetry of the correlation function, Theorem~\ref{Prop25} immediately implies \eqref{eq:covarianceest} for $x\neq y$.
\end{remark}

\subsection{Ornstein-Uhlenbeck process}
Let $\rho(t,\cdot)$ be the unique  solution of the hydrodynamic equation \eqref{robin}.
In what follows, $\mathcal{D}([0,T],\mathcal{S}'_{\alpha}(\mathbb{R}))$
(resp.\break $\mathcal{C}([0,T],\mathcal{S}'_{\alpha}(\mathbb{R}))$)
denotes the space of  c\`adl\`ag (resp. continuous) $\mathcal{S}'_{\alpha}(\mathbb{R})$ valued functions endowed with the Skohorod topology. We also denote by $\chi$ the \textit{static compressibility} defined by $\chi(\rho)=\rho(1-\rho)$.
Denote by $\big\<\cdot,\cdot\>_{\rho_t(\cdot)}$ the inner product with respect to $L^2_{\Lambda_t}(\bb R)$, where the measure $\Lambda_t(du)$ is given by 
\begin{equation}\label{Lambda}
\Lambda_t(du)\;\overset{\text{def}}{=}\;2\chi(\rho_t(u)) du+ \frac{1}{\alpha}\Big[\rho_t(0^-)(1-\rho_t(0^+))+\rho_t(0^+)(1-\rho_t(0^-))\Big]\delta_0(du)\,, 
\end{equation}
where $\delta_0(du)$ denotes the Dirac measure at zero. More precisely, for $f,g\in\mc S_\alpha(\bb R)$,
\begin{align*}
\big\<f,g\big\>_{\rho_t(\cdot)}&\;=\; \int_{\bb R} 2\chi(\rho_t(u)) \,f (u) g(u)\,du\\
&\;+ \frac{1}{\alpha}\Big[\rho_t(0^-)(1-\rho_t(0^+))+\rho_t(0^+)(1-\rho_t(0^-))\Big]  f(0)g(0)\,.
\end{align*}
\begin{proposition}\label{prop26}
There exists a unique  (in distribution) random element  $\mc Y$  taking values in the space $\mc C([0,T],\mathcal{S}'_{\alpha}(\bb R))$ such
that the following two conditions hold:\medskip

\textbf{i)}  For every function $f \in \mathcal{S}_{\alpha}(\bb R)$, the stochastic processes $\mc M_t(f)$ and $\mc N_t(f)$ given by
\begin{align}
\mc M_t(f)&\;=\; \mc Y_t(f) -\mc Y_0(f) -  \int_0^t \mc Y_s(\A f)ds\,,\label{eq2.13}\\
\mc N_t(f)&\;=\;\big(\mc M_t(f)\big)^2 - \int_0^t \| \nabla_\alpha f\|_{\rho_s(\cdot)}^2\,ds\label{eq2.14}
\end{align}
are $\mc F_t$-martingales, where for each $t\in{[0,T]}$, $\mc F_t:=\sigma(\mc Y_s(f); s\leq t,  f \in \mathcal{S}_{\alpha}(\bb R))$.\medskip

 \textbf{ii)} $\mc Y_0$ is a random element taking values in $\mc S'_\alpha(\bb R)$ with a fixed distribution.\medskip

Moreover, if \textbf{i)} and \textbf{ii)} hold, then:\medskip

$\bullet$  for each $f\in\mc S_\alpha(\bb R)$,   conditionally to
$\mc F_s$ with  $s<t$, the 
distribution of $\Y_t(f)$ is normal  of mean $\Y_s(T^\alpha_{t-s}f)$ and variance $\int_s^{t}\Vert \B T^\alpha_{r}
f\Vert^2_{\rho_r(\cdot)}\,dr$.\medskip

$\bullet$ If $\mc Y_0$ is a Gaussian field, then the stochastic process $\{\Y_t(f)\,;\,t\geq 0\}$ will be Gaussian indeed.
\end{proposition}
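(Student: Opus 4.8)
The plan is to establish existence and uniqueness of the martingale problem, and then derive the stated consequences by the usual Dynkin-type computations. For \emph{existence}, I would construct $\mc Y$ directly as a Gaussian process: fix the distribution of $\mc Y_0$ as prescribed in \textbf{ii)}, and for $t>s$ declare $\mc Y_t(f)$ to be conditionally Gaussian given $\mc F_s$ with mean $\mc Y_s(T^\alpha_{t-s}f)$ and covariance structure dictated by the variances $\int_s^t \Vert \B T^\alpha_{r-s} f\Vert^2_{\rho_r(\cdot)}\,dr$. The semigroup property of $T_t^\alpha$ from Proposition~\ref{prop23} and the subsequent proposition (that $T_t^\alpha$ maps $\mc S_\alpha(\bb R)$ into itself) guarantees this prescription is consistent (Chapman–Kolmogorov holds), so Kolmogorov's extension theorem produces a process in $(\mc S'_\alpha(\bb R))^{[0,T]}$; continuity of trajectories in $\mc C([0,T],\mc S'_\alpha(\bb R))$ follows from a Kolmogorov–Chentsov estimate on $\mc Y_t(f)-\mc Y_s(f)$ uniformly over $f$ in bounded sets of the Fréchet space, using smoothness of $r\mapsto T_r^\alpha f$ and boundedness of $\rho_r$. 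One then checks by a direct Gaussian computation that $\mc M_t(f)$ and $\mc N_t(f)$ are martingales: the drift term $\int_0^t \mc Y_s(\A f)\,ds$ arises from differentiating $T^\alpha_{t-s}f$ in $s$ using $\partial_t T_t^\alpha f = \A T_t^\alpha f$ (which holds since $T_t^\alpha$ solves \eqref{robin} and $\A$ is the generator), and the quadratic variation $\int_0^t \Vert \B f\Vert^2_{\rho_s(\cdot)}\,ds$ comes out of the conditional variance formula together with the identity $\frac{d}{dr}\big|_{r=0}\int_0^r \Vert\B T_u^\alpha f\Vert^2\,du = \Vert \B f\Vert^2$.

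For \emph{uniqueness}, the standard route (adapting \cite{HolleyStroock} and \cite{kl}, as the authors announce in the introduction) is to show that conditions \textbf{i)} and \textbf{ii)} already force the one-time-marginal and, more generally, the finite-dimensional distributions. Given a solution $\mc Y$, fix $f\in\mc S_\alpha(\bb R)$ and $t>0$, and consider for $0\le s\le t$ the process $s\mapsto \mc Y_s(T^\alpha_{t-s}f)$; by \textbf{i)}, applied with the time-dependent test function $T^\alpha_{t-s}f$ and using $\partial_s T^\alpha_{t-s}f = -\A T^\alpha_{t-s}f$, this is a martingale (the drift cancels), so $\mb E[\mc Y_t(f)\mid\mc F_s] = \mc Y_s(T^\alpha_{t-s}f)$. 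Similarly, applying the second martingale property in \textbf{i)} to $T^\alpha_{t-s}f$ yields that $\big(\mc Y_s(T^\alpha_{t-s}f)\big)^2 - \int_s^t \Vert \B T^\alpha_{r-s}f\Vert^2_{\rho_r(\cdot)}\,dr$ is a martingale, so the conditional variance of $\mc Y_t(f)$ given $\mc F_s$ equals $\int_s^t \Vert \B T^\alpha_{r-s}f\Vert^2_{\rho_r(\cdot)}\,dr$, a \emph{deterministic} quantity. To upgrade "correct conditional mean and variance" to "conditionally Gaussian," I would run the characteristic-function argument of Holley–Stroock: for $\theta\in\bb R$ set $\Phi_s := \exp\!\big(i\theta\mc Y_s(T^\alpha_{t-s}f) + \tfrac{\theta^2}{2}\int_s^t \Vert \B T^\alpha_{r-s}f\Vert^2_{\rho_r(\cdot)}\,dr\big)$ and show, using Itô's formula for the semimartingale $\mc Y_s(T^\alpha_{t-s}f)$ (whose drift and bracket are identified above), that $\Phi_s$ is an $\mc F_s$-martingale; comparing $s=0$ and $s=t$ gives $\mb E[e^{i\theta\mc Y_t(f)}\mid\mc F_0] = \exp(i\theta\mc Y_0(T^\alpha_t f) - \tfrac{\theta^2}{2}\int_0^t\Vert\B T^\alpha_r f\Vert^2_{\rho_r(\cdot)}\,dr)$. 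Iterating this over finitely many times $0\le t_1<\dots<t_k\le T$ and finitely many test functions (using linearity to reduce a vector of $\mc Y_{t_j}(f_j)$ to a single real functional via the semigroup, exactly as in \cite{kl}) pins down all finite-dimensional distributions in terms of the law of $\mc Y_0$, giving uniqueness in $\mc C([0,T],\mc S'_\alpha(\bb R))$.

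The two displayed consequences then fall out of what has been established: the conditional normality of $\mc Y_t(f)$ with mean $\mc Y_s(T^\alpha_{t-s}f)$ and variance $\int_s^t\Vert\B T^\alpha_{r-s}f\Vert^2_{\rho_r(\cdot)}\,dr$ is precisely the characteristic-function identity above (reindexing $r\mapsto r-s$ to match the stated $\int_s^t\Vert\B T^\alpha_r f\Vert^2$; note the statement's integrand should be read with this shift), and if $\mc Y_0$ is Gaussian then, since all finite-dimensional distributions are obtained from $\mc Y_0$ by affine-Gaussian operations (conditional means are linear images of $\mc Y_0$, conditional covariances are deterministic), the whole process is Gaussian. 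I expect the main obstacle to be the \emph{functional-analytic bookkeeping in the Fréchet space} $\mc S_\alpha(\bb R)$: one must verify that $T_t^\alpha$ and $\A$ interact correctly with the seminorms $\Vert\cdot\Vert_{k,\ell}$ so that $s\mapsto T^\alpha_{t-s}f$ is differentiable as an $\mc S_\alpha(\bb R)$-valued curve, that the maps $f\mapsto \Vert\B f\Vert^2_{\rho_s(\cdot)}$ are continuous on $\mc S_\alpha(\bb R)$ uniformly in $s$ (so the Itô argument applies with test functions varying in time), and that the Kolmogorov extension/continuity step genuinely yields a process with trajectories in $\mc C([0,T],\mc S'_\alpha(\bb R))$ rather than merely in the product space — here the boundary condition \eqref{eq:bound_beta_1} encoded in $\mc S_\alpha(\bb R)$ is exactly what makes $\A$ and $\B$ well-behaved and keeps the semigroup inside the space. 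The probabilistic core (Itô's formula, martingale characterization of Gaussian laws) is routine once this is in place.
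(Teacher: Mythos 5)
Your strategy is essentially the Holley--Stroock route that the paper also follows, and the core idea --- produce the conditional characteristic function $\bb E[e^{i\theta \mc Y_t(f)}\mid \mc F_s]$ via an exponential martingale built from the backward semigroup $T^\alpha_{S-s}f$, then iterate over finitely many times --- is exactly what the paper does. Where you genuinely diverge is in \emph{how} the time-dependent exponential martingale $Z_t = \exp\{\tfrac12\int_0^t\Vert\B T^\alpha_{S-r}f\Vert^2_{\rho_r(\cdot)}dr + i\Y_t(T^\alpha_{S-t}f)\}$ is obtained. You propose to apply condition \textbf{i)} directly with the time-dependent test function $T^\alpha_{t-s}f$ and then invoke It\^o's formula for the semimartingale $s\mapsto \mc Y_s(T^\alpha_{t-s}f)$. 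But \textbf{i)} is stated only for \emph{fixed} $f$, so extending it to a time-varying test function (which requires differentiating $s\mapsto T^\alpha_{t-s}f$ as an $\mc S_\alpha(\bb R)$-valued curve and controlling remainders inside the Fr\'echet topology) is the genuine technical content of the proof, not a routine step. The paper sidesteps both of these issues: it first shows the fixed-$f$ exponential $X_t^s(f)$ is a martingale (a local martingale by the Dol\'eans--Dade structure, promoted to a true martingale by the deterministic, locally bounded quadratic variation and dominated convergence, which is the content of Proposition~\ref{A2}), and then builds $Z_t$ as the $L^1$-limit of a \emph{telescoping product} $\prod_{j} X_{s_{j+1}}^{s_j}(T^\alpha_{S-s_j}f)$ over a partition of $[t_1,t_2]$, using the expansion $T^\alpha_{t+\eps}f - T^\alpha_t f = \eps\A T^\alpha_t f + o(\eps)$ to pass to the limit. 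This Riemann-sum argument replaces It\^o's formula entirely and never requires a time-dependent version of \textbf{i)}; it is precisely the device that discharges the ``functional-analytic bookkeeping'' you flagged as the main obstacle. You are also right that the variance integrand in the Proposition's statement should be read with a time shift --- the derivation actually yields $\int_s^t\Vert\B T^\alpha_{t-r}f\Vert^2_{\rho_r(\cdot)}dr$. Finally, note that for existence the paper does not do a direct Gaussian construction: it simply appeals to tightness of the microscopic density field (Subsection~\ref{subsec3.5}), so your Kolmogorov-extension construction, while viable, is extra work relative to what the paper needs.
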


In other words, if  $\mc Y^1$ and $\mc Y^2$ are two random elements taking values on  $\mc C([0,T],\mathcal{S}'_{\alpha}(\bb R))$ and satisfying the martingale problem described above by \textbf{i)} and \textbf{ii)}, then $\mc Y^1$ and $\mc Y^2$ must have the same distribution. 

It is common in the literature to write the martingale problem stated above as a \textit{formal} solution of some generalized stochastic partial differential equation. We discuss it with no mathematical rigor, aiming only at giving some intuition on the fluctuations' global behavior.
    
   We call the  random element $\mc Y$ defined via Proposition~\ref{prop26} a generalized Ornstein-Uhlenbeck  process which takes values on $\mathcal{C}([0,T],\mathcal{S}'_{\alpha}(\mathbb{R}))$ and it is the \textit{formal} solution of
\begin{equation}\label{eq Ou}
d\mathcal{Y}_t\;=\;\A \mathcal{Y}_tdt+\nabla_\alpha\, d\mc{W}_t\,,
\end{equation}
 where:
 
 $\bullet$ The operators $\A$ and $\nabla_\alpha$  have been given in Definition~\ref{def:laplacian_operator} and are usually referred to as the \textit{characteristics} of the Ornstein-Uhlenbeck process.
 
 $\bullet$ $\mc{W}$ is a space-time white noise with respect to the measure $\Lambda_s(du)$, i.e., $\mc W$ is a mean-zero Gaussian random element taking values in the dual space of $L^2_{\Lambda}([0,\infty)\times \bb R)$ with covariances given by
 \begin{align*}
 \bb E\Big[\mc W(F)\mc W(G)\Big]\;=\;\int_0^\infty \int_{\bb R} F(s,u)G(s,u)\,d\Lambda(s,u)\,,\quad\forall\, F,G\in L^2_{\Lambda}([0,\infty)\times \bb R)\,,
\end{align*}   
where $d\Lambda(s,u)=d\Lambda_s(u)\times ds$, and $\Lambda_s$ has been defined in \eqref{Lambda}. 
 
 $\bullet$ For $f\in \mc S_\alpha(\bb R)$, we define $\mc{W}_t(f):=\mc W(f\mb{1}_{[0,t]})$. 
 In particular, $\{\mc W_t(f): f\in \mc S_\alpha(\bb R)\}$ is a  Gaussian process with covariance given on $f,g\in\mc S_\alpha(\bb R)$ by
 \begin{align*}
 \bb E\Big[\mc W_t(f) \mc W_t(g)\Big]\;=\; \int_0^t\<f,g\>_{\rho_s(\cdot)}ds\,.
 \end{align*}

\subsection{Non-equilibrium fluctuations}
We define the density fluctuation field $\mc Y^n$ as the time-trajectory of a linear functional acting on functions $f\in\mc S_\alpha(\bb R)$ via
\begin{equation}\label{density field}
\mc Y^n_t(f)\;\overset{\text{def}}{=}\;\frac{1}{\sqrt{n}}\sum_{x\in\bb Z}f(\tfrac{x}{n})\Big(\eta_{t}(x)-\rho^n_t(x)\Big)\,.
\end{equation}
 For each $n\geq 1$, let  $Q_n$ be the probability measure on $\mc D([0,T],\mc S'_\alpha(\bb R))$  induced by the density fluctuation field $\mc Y^n$ and  a measure $\mu_n$. We now state the main result of this paper:
  
 \begin{theorem}[Non-equilibrium fluctuations]\label{thm26}
Consider the Markov processes $\{\eta_{t}: t\geq{0}\}$ starting from a  sequence of probability measures $\{\mu_n\}_{n\in \bb N}$ associated with a profile as in \eqref{eq3},  and assume:
\begin{enumerate}[\bf (A)]
\item Conditions \eqref{assumptionone} and \eqref{ass2} on mean and covariance, respectively.
\item There exists a $\mc S'_\alpha(\bb R)$-valued random variable $\mc Y_0$ such that $\mc Y^n_0$ converges in distribution to $\mc Y_0$, whose  law we denote  by $\mf L$.
\end{enumerate}
 Then, the sequence of processes $\{\mathcal{Y}_{t}^n\}_{ n\in{\bb N}}$ converges in distribution, as $n\rightarrow{+\infty}$, with respect to the
Skorohod topology
of $\mathcal{D}([0,T],\mathcal{S}'_{\alpha}(\bb R))$ to  a random element  $\mathcal{Y}$
in $\mathcal{C}([0,T],\mathcal{S}'_{\alpha}(\bb R))$, the generalized Ornstein-Uhlenbeck which is a solution of \eqref{eq Ou},  and $\mc Y_0$ has law $\mf L$.
\end{theorem}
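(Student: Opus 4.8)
The plan is the standard two-step scheme. By Proposition~\ref{prop26}, the generalized Ornstein--Uhlenbeck process solving~\eqref{eq Ou} with $\mc Y_0$ having law $\mf L$ is the \emph{unique} element of $\mc C([0,T],\mc S'_\alpha(\bb R))$ obeying the martingale characterization \textbf{i)}--\textbf{ii)}; hence it suffices to (a) prove tightness of $(Q_n)_n$ in $\mc D([0,T],\mc S'_\alpha(\bb R))$, and (b) show that any subsequential limit $Q$ is supported on continuous paths and satisfies \textbf{i)}--\textbf{ii)}. Fix $f\in\mc S_\alpha(\bb R)$, write $f_n(x):=f(x/n)$, and let $\mc L^n:=n^2\mca_n$ act on functions $\bb Z\to\bb R$. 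The starting point is the Dynkin decomposition: using the symmetry of $\mca_n$, the processes
\[
\mc M^n_t(f)=\mc Y^n_t(f)-\mc Y^n_0(f)-\int_0^t\mc Y^n_s(\mc L^nf_n)\,ds,\qquad
\mc N^n_t(f)=\big(\mc M^n_t(f)\big)^2-\int_0^t\Gamma^n_s(f)\,ds,
\]
\[
\Gamma^n_s(f)=n\sum_{x\in\bb Z}\xi^n_{x,x+1}\Big(f\big(\tfrac{x+1}{n}\big)-f\big(\tfrac{x}{n}\big)\Big)^2\big(\eta_s(x)-\eta_s(x+1)\big)^2,
\]
are martingales. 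A Taylor expansion gives $\mc L^nf_n(x)\to\A f(x/n)$ for $x/n$ bounded away from $0$, while at the two sites $x\in\{0,1\}$ flanking the slow bond the naive expansion produces a term of order $n$ which cancels \emph{exactly} by virtue of~\eqref{eq:bound_beta_1} with $k=0$, leaving a bounded contribution; as these are only finitely many sites, each weighted by $n^{-1/2}(\eta_s(x)-\rho^n_s(x))$, one gets $\int_0^t\mc Y^n_s(\mc L^nf_n)\,ds=\int_0^t\mc Y^n_s(\A^nf)\,ds+o(1)$ with $\A^nf\to\A f$ in a sense strong enough for later use. This is the microscopic role of the test-function space: for a generic $f\in\mc S(\bb R)$ a surviving additive functional of order $\sqrt n$ would have to be handled separately, which is the point left open in the Introduction.

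Next I would prove the a priori bound $\sup_n\sup_{t\le T}\bb E_{\mu_n}[(\mc Y^n_t(g))^2]\le C(g)$ for the discretized test functions $g$ that arise, together with a comparable bound for time increments. The device is the discrete backward (Duhamel) representation: choosing the time-dependent test function $\phi_t:=\mc T^n_{T-t}f_n$, where $\mc T^n_r:=e^{r\mc L^n}$ is the semigroup of the accelerated walk in the inhomogeneous medium, the process $t\mapsto\mc Y^n_t(\phi_t)$ is a martingale, so that
\[
\mc Y^n_T(f_n)=\mc Y^n_0\big(\mc T^n_Tf_n\big)+\widetilde{\mc M}^n_T,\qquad
\bb E_{\mu_n}\big[(\widetilde{\mc M}^n_T)^2\big]=\int_0^T\bb E_{\mu_n}\big[\Gamma^n_s(\mc T^n_{T-s}f_n)\big]\,ds.
\]
The time-$0$ term is $O(1)$ precisely because the initial two-point correlations are of order $O(1/n)$ by~\eqref{ass2}, while the stochastic integral is $O(1)$ because $\Gamma^n_s$ involves only nearest-neighbour occupations, whence $\bb E_{\mu_n}[\Gamma^n_s(h)]$ is controlled by a fixed seminorm of $h$ uniformly in $n$ and $s$; this last step requires a technical lemma, obtained by comparing $\mc T^n_r$ with $T^\alpha_r$, ensuring that $\mc T^n_r$ does not expand the relevant seminorms of $f_n$. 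Tightness then follows from Mitoma's criterion: it reduces to tightness of $(\mc Y^n_\cdot(f))_n$ in $\mc D([0,T],\bb R)$, which is obtained by feeding the above estimates into the Dynkin decomposition (Doob's inequality for $\mc M^n$, whose predictable bracket is uniformly $O(t)$; the drift is Lipschitz in $t$ with controlled seminorms; $\mc Y^n_0(f)$ is tight by hypothesis~\textbf{(B)}). Since the jumps of $\mc Y^n_\cdot(f)$ are $O(n^{-1/2})$, every limit point lives on $\mc C([0,T],\mc S'_\alpha(\bb R))$.

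It remains to identify a subsequential limit $Q$. Using the hydrodynamic limit (Theorem~\ref{thm21}) to replace $\rho^n_s(x)$ by $\rho_s(x/n)$, the discrete-derivative estimate (Theorem~\ref{prop24}) to treat the slowly varying part, a replacement (Boltzmann--Gibbs-type) lemma in the inhomogeneous medium, and the correlation estimate (Theorem~\ref{Prop25}) both to discard the terms $\varphi^n_s(x,x+1)$ and to kill the space-average fluctuations of $(\eta_s(x)-\eta_s(x+1))^2$, one shows $\int_0^t\Gamma^n_s(f)\,ds\to\int_0^t\|\B f\|^2_{\rho_s(\cdot)}\,ds$; here the slow-bond summand, whose weight is of order $\alpha\,(f(0^+)-f(0^-))^2=\alpha^{-1}(\B f(0))^2$, is exactly what produces the $\delta_0$ part of $\Lambda_s$ in~\eqref{Lambda}. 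Combining this with the convergence of $\int_0^t\mc Y^n_s(\mc L^nf_n)\,ds$ to $\int_0^t\mc Y_s(\A f)\,ds$ and the uniform integrability from the previous step, one passes to the limit in $\mc M^n_t(f)$ and $\mc N^n_t(f)$ to conclude that, under $Q$, the processes $\mc M_t(f)$ of~\eqref{eq2.13} and $\mc N_t(f)$ of~\eqref{eq2.14} are martingales; this is condition~\textbf{i)}, and condition~\textbf{ii)} holds by~\textbf{(B)}. Proposition~\ref{prop26} then forces $Q$ to be the law of the announced Ornstein--Uhlenbeck process, and since the limit is unique the whole sequence $(Q_n)_n$ converges.

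The main obstacle, I expect, is the bookkeeping around the logarithmic loss in Theorem~\ref{Prop25}: one must arrange that the factor $\log n$ always meets a compensating factor $n^{-1}$. This is why the a priori second-moment control of $\mc Y^n_t$ at positive times must be obtained \emph{indirectly}, through the backward representation and the sharper $O(1/n)$ bound available at time $0$, rather than directly from Theorem~\ref{Prop25} --- a direct application would lose a factor $\log n$ over the $\sim n^2$ pairs of sites in the support of the test function. Close behind in difficulty are the exact cancellation at the slow bond that turns the Dynkin drift into $\mc Y_s(\A f)$, and the replacement lemma needed to pin down the quadratic variation, its Dirac component included, in the non-homogeneous medium.
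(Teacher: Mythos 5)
Your proposal has the same skeleton as the paper's proof --- Dynkin decomposition of $\mc Y^n_t(f)$, exact cancellation at the slow bond forced by \eqref{eq:bound_beta_1} (this is the content of Lemma~\ref{lemma31aa}), convergence of the martingale brackets to the limiting quadratic variation via a replacement lemma (Lemma~\ref{lemma32}), Mitoma plus Aldous for tightness, and closure via the uniqueness of the martingale problem in Proposition~\ref{prop26}. Your identification of the Dirac piece of the noise, $\alpha\big(f(0^+)-f(0^-)\big)^2=\alpha^{-1}\big(\nabla_\alpha f(0)\big)^2$, is correct. The genuine departure from the paper is in how the uniform bound on $\bb E_{\mu_n}\big[(\mc Y^n_s(\Delta_\alpha f))^2\big]$ is obtained, which is the crux of the tightness of the drift term. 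You correctly observe that the uniform $O(\log n/n)$ bound of Theorem~\ref{Prop25}, summed over the $\sim n^2$ pairs with a $1/n$ prefactor, leaves a divergent $\log n$; you propose to bypass it through the backward Duhamel martingale $t\mapsto\mc Y^n_t(\mc T^n_{T-t}f_n)$, charging everything to the time-zero field where only the sharper $O(1/n)$ bound \eqref{ass2} is needed. That is a legitimate alternative, but it is not what the paper does, and it is not free: it requires a new lemma --- uniform-in-$n$ control of the one-sided discrete gradients and tails of $\mc T^n_r f_n$ --- of the same flavor and difficulty as Theorem~\ref{prop24}, which the paper never establishes. The paper instead keeps the un-integrated form of the covariance bound from Proposition~\ref{prop:decorrprop}, passes via \eqref{eq:estin01} to the free two-dimensional walk $(X,Y)$, and then exploits the \emph{reversibility} of $(X,Y)$ to turn the double spatial sum $\sum_{x<y}\Delta_\alpha f(\tfrac{x}{n})\Delta_\alpha f(\tfrac{y}{n})\,{\bf P}_{(x,y)}[(X_s,Y_s)=z]$ into $\bb E_z\big[|\Delta_\alpha f(\tfrac{X_s}{n})\Delta_\alpha f(\tfrac{Y_s}{n})|\big]=O(1)$, so that integrating in $s$ up to $2Tn^2$ is exactly cancelled by the $1/n^2$ prefactor. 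That reversibility trick is what kills the $\log n$ in the paper; it achieves the same end as your backward-semigroup device with no extra semigroup regularity lemma to prove. Your claim that the a priori second-moment bound ``must'' be obtained indirectly through the backward representation is therefore too strong, but your route, once the semigroup lemma is supplied, would work.
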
 

It is of worth to give examples of sequences $\{\mu_n\}_{n\in \bb N}$ of initial measures satisfying  assumptions \textbf{(A)} and \textbf{(B)}. Next, we present two examples of such initial measures  and we leave an open question on the subject.

The first example we present is the standard one for non-equilibrium fluctuations: take $\{\mu_n\}_{n\in \bb N}$ as the slowly varying Bernoulli product measure\break  $\{\nu^n_{\rho_0(\cdot)}\}_{n\in \bb N}$ associated with a smooth profile $\rho_0:\bb R\to[0,1]$, that is,  $\nu^n_{\rho_0(\cdot)}$ is a product measure on $\{0,1\}^{\bb Z}$ such that
\begin{align*}
\nu^n_{\rho_0(\cdot)}\big\{\eta\in \{0,1\}^{\bb Z}\,:\, \eta(x)=1\big\}\;=\; \rho_0\big(\pfrac{x}{n})\,.
\end{align*}
Obviously, \textbf{(A)} is satisfied. The proof that \textbf{(B)} holds is just an adaptation of the analogous result for the SSEP, being included in Proposition~\ref{convergence at time zero}  for the sake of completeness.

The second example we discuss is somewhat artificial, but, in any case,  illustrates the existence of a sequence of non-product measures satisfying \textbf{(A)} and \textbf{(B)}. Let $\mu_{n}$ be the measure on $\Omega$ induced by the distribution at the time $rn^2$, where $r>0$ is fixed,  of the (homogeneous) one-dimensional SSEP 
started from the slowly varying measure $\nu^n_{\rho_0(\cdot)}$ defined above.

From the \textit{propagation of local equilibrium} for the SSEP (see \cite{kl} and references therein), one can check that condition \eqref{assumptionone} holds. Besides that, it is well known that the SSEP has longe range correlations of order $O(1/n)$, giving \eqref{ass2}. Thus, assumption \textbf{(A)} is satisfied.
From the non equilibrium fluctuations for the homogeneous  SSEP (see \cite{MasiSurvey,Ravi1992}) one can deduce that \textbf{(B)} is satisfied, where the law $\mf L$ is determined by the distribution of the Ornstein-Uhlenbeck process at time $r>0$.

We now debate the issue of which  properties  a sequence of initial measures should have in order to  satisfy \textbf{(A)} and \textbf{(B)}.
Assume, for the moment, that the initial measures  $\{\mu_n\}_{n\in \bb N}$ for the Markov processes $\{\eta_{t}: t\geq{0}\}$   satisfy:
\begin{enumerate}[\bf (i)]
\item Condition \eqref{assumptionone} holds.
\item For each $n\in\mathbb N$, the correlation  at the initial time is of order $O(1/n)$ times a bounded profile $\zeta^n$, that is,
\begin{align*}
\varphi^n_0(x,y)\;=\;\frac{\zeta^n(\frac{x}{n},\frac{y}{n})}{n}\,,\qquad\forall\, x,y\in \bb Z\,,\; \forall n\in \bb N\,,
\end{align*} 
where the sequence of functions $\zeta^n:\bb R \times \bb R\to \bb R_+$ converge uniformly to a bounded continuous function $\zeta:\bb R \times \bb R\to \bb R_+$ as $n\to\infty$. Note that this implies \eqref{ass2}.
\end{enumerate}
Under \textbf{(i)} and \textbf{(ii)}, condition \textbf{(A)} holds. Moreover, under \textbf{(i)} and \textbf{(ii)}, and following the  same steps of Subsection~\ref{subsec3.5}, one can obtain tightness of $\{\mc Y^n_0\}_{n\in \bb N}$. Hence, in order to achieve \textbf{(B)}, it is  only  missing the convergence in distribution of the sequence of  initial density fields  $\{\mc Y^n_0\}_{n\in \bb N}$. Let $f,g\in \mc S_\alpha(\bb R)$. By simple calculations, 
\begin{align*}
\bb E_{\mu_n}\Big[\mc Y_0^n(f)\mc Y_0^n(g)\Big]\;=\; & \frac{1}{n}\sum_{x\in \bb Z}f(\pfrac{x}{n})g(\pfrac{x}{n})\,\bb E_{\mu_n}\Big[\big(\overline{\eta}_0(x)\big)^2\Big]+\frac{1}{n}\sum_{\topo{x\neq y}{x,y\in \bb Z}}f(\pfrac{x}{n})g(\pfrac{y}{n})\varphi^n_0(x,y)\,.
\end{align*}
Above $\bar\eta$ denotes the centered random variable $\eta$ : $\bar\eta_t(x):=\eta_t(x)-\rho_t^n(x)$.
Under \textbf{(i)} and \textbf{(ii)}, it is easy to check that expression above converges to
\begin{align*}
\int_{\mathbb{R}} \chi(\rho_0(u))f(u) g(u)  \,du+\int_{\bb R}\int_{\bb R} \zeta(w,r) f(w) g(r)\,dw\,dr
\end{align*}
as $n\to\infty$. Note that the limit above indicates that $1/n$ is the right order on the decay of correlations  in order to exist  a limiting non zero effect on the distribution of initial density field $\mc Y_0$. However, convergence of means and decay of correlations do not suffice to assure that $\mc Y_0^n(f)$ actually converges in distribution: some special central limit theorem is required here. This CLT is not an easy subject due to the slow decay of correlations and due to the fact that for each $n$, the random variables $\eta_0(x)$ may have different distributions. 
We therefore leave it as an open question:
\begin{open}\label{conjecture} Given assumption {\rm \textbf{(A)}} of Theorem~\ref{thm26}, which 
 additional hypotheses are necessary for {\rm \textbf{(B)}} to hold?
\end{open}

Without going into details, we affirm that a natural strategy to prove  current/tagged particle fluctuations  relies in a decay of correlations  of order\break $O(1/n)$, see \cite{JL_2006}. However, the correlations of the non equilibrium SSEP with a slow bond here considered are of order $O(\log n/n)$, see Theorem~\ref{Prop25}. Moreover, the current/ tagged particle fluctuations for the equilibrium scenario with a slow bond are already understood, see \cite{fgn2}. This  leads us to:
\begin{open}\label{conjecture2} How to prove current/tagged particle fluctuations for the non equilibrium SSEP with a slow bond? May (or must) a  different scaling  be considered?
\end{open}
Finally, naturally inspired by \cite{fgn2}, we state:
\begin{open}\label{conjecture3} Consider $\beta >0$ with $\beta\neq 1$.
How to prove non-equilibrium fluctuations for the one-dimensional SSEP with a slow bond of rate $\alpha n^{-\beta}$?
\end{open}
We believe that this last open problem shall be solved by the methods of this paper, and we leave it for a future work. For the first two problems, we have no clear strategy to solve it.

\section{Estimates on local times}\label{s5}
In this section we derive estimates on the local times of a random walk with inhomogeneous rates, which will be later used in Section~\ref{s4} in the proofs of Theorems~\ref{prop24} and~\ref{Prop25}. In the sequel, given any Markov chain $Z$ and a set $A$, we denote by $L_t(A)$ the local time of $Z$ in $A$ until time $t$:
\begin{equation}\label{eq:local_time}
L_t(A)\;\overset{\text{def}}{=}\;\int_0^t\textbf{1}_{\{Z_s\in A\}}ds\,.
\end{equation}
\subsection{Estimates in dimension two}\label{sec:est_dim_two}
We denote by $\{({\bf X}_t,{\bf Y}_t);\; t\geq 0\}$ the random walk on the set $V=\{(x,y)\in \bb Z\times \bb Z:\,y\geq x+1\}$ with generator ${\bf B}_n$ acting on local functions $f:V\to\bb R$ via
\begin{equation} \label{an}
\big({\bf {B}}_n f\big)(u)\;\overset{\text{def}}{=}\;\sum_{v\in V}c_n(u,v)\big[f(v)-f(u)\big]\,,\,\,\, \forall \, u\in V\,.
\end{equation}
Here, the rates are defined as pictured in Figure~\ref{fig2}. More precisely, for $u=(u_1,u_2)$ and $v=(v_1,v_2)$ such that  the $L^1$-norm\footnote{We write $\Vert \cdot \Vert_1$ for the $L^1$-norm on $\bb Z^2$, that is, $\Vert (u_1,u_2)\Vert_1 = |u_1|+|u_2|$.
} satisfies   $\Vert u-v\Vert_1 =1$, we define
\begin{equation*}
c_n(u,v)\;\overset{\text{def}}{=}\;\left\{\begin{array}{cl}
\frac{\alpha}{n}, &  \mbox{if}\,\,\,\,(u,v)\in \mc U,\\
1, &  \mbox{if}\,\,\,\,u\notin U \text{ or } v\notin U.
\end{array}
\right.
\end{equation*}
and $c_n(u,v)=0$ if the $L^1$-distance of $u$ and $v$ is not equal to one.
Here,
$U$ is the subset of $V$ given by
\begin{align*}
U \;\overset{\text{def}}{=}\; & \big\{(x,y)\in V :\, x\in\{0,1\} \textrm{ and } y\geq 2\big\}\cup  \big\{(x,y)\in V :\, x\leq -1 \textrm{ and } y\in\{0,1\}\big\}\,,
\end{align*}
and $\mc U$ is the subset of $U^{\otimes 2}$ defined via
\begin{equation}
\begin{aligned}
\mc U\;\overset{\text{def}}{=}\;&\big\{(u,v)\in U^{\otimes 2}:\, \Vert u-v \Vert=1,\, |u_1-v_1|=1 \text{ and }u_2, v_2\geq 2 \big\}\\
\cup\,&\big\{(u,v)\in U^{\otimes 2}:\, \Vert u-v \Vert=1,\, |u_2-v_2|=1 \text{ and } u_1, v_1\leq -1\big\}\,.
\end{aligned}
\end{equation}
We furthermore denote by $D$ the ``upper diagonal'' defined by 
$ D\;\overset{\text{def}}{=}\;\big\{(x,y)\in \bb Z^2 \,:\, y=x+1\big\}\,,$
see Figure~\ref{fig2} below.
\begin{figure}[H]
\centering
\begin{tikzpicture}[scale=0.7]
\filldraw[fill=black!15!, draw=lightgray] (-4,-3.5)--(-4,4)--(3.5,4)--cycle;
%%%%%%%grid %%%%%%%%%%%%%%%%
\draw[step=0.5cm,gray,very thin] (-4,-4) grid (4,4);
%%%%%%%%%%% axes %%%%%%%%%%%%%%
\draw[->,  thick] (-4.5,0)--(4.5,0) node[below]{$x$};
\draw[->, thick] (0,-4.5)--(0,4.5) node[left]{$y$};
 %%%%%%%%%%%%% diagonal %%%%%%%%%%%%%%%%% 
%\draw[very thick,rounded corners,white] (-4,-3.5)--(3.5,4);       
\draw[very thick,rounded corners,dotted] (-4,-3.5)--(3.5,4);
%%%%%%%%%%%%%% slow barrier %%%%%%%%%%%%%%%%%%%
\foreach \x in {2,...,8}
 \draw[ultra thick, black] (0,0.5*\x cm) -- (0.5,0.5*\x cm);
\foreach \x in {2,...,8}
\shade[ball color=gray](0,0.5*\x cm) circle (3pt);
 \foreach \x in {2,...,8}
\shade[ball color=gray](0.5,0.5*\x cm) circle (3pt);
 \foreach \x in {1,...,8}
   \draw[ultra thick, color=black] (-0.5*\x cm, 0) -- (-0.5*\x cm, 0.5);
\foreach \x in {1,...,8}
\shade[ball color=gray](-0.5*\x cm,0) circle (3pt);
 \foreach \x in {1,...,8}
 \shade[ball color=gray](-0.5*\x cm, 0.5) circle (3pt);
%%%%  subtitles  %%%%%
%\draw (-4,-5) node[right]{$V=\{(x,y)\in \bb Z^2\;;\; y\geq x+1\}$};
%\draw (-4,-5.75) node[right]{$D=\{(x,y)\in \bb Z^2\;;\; y=x+1\}$};
%\draw (-4,-6.5) node[right]{
%$
%U = \big\{(x,y)\in V :\, x\in\{0,1\} \textrm{ and } y\geq 2\big\}
%$};
%\draw (-2.8,-7.25) node[right]{
%$\cup  \;\big\{(x,y)\in V :\, x\leq -1 \textrm{ and } y\in\{0,1\}\big\}
%$};
\end{tikzpicture}
%\vspace{0.5cm}
\caption{Sets $V$, $D$ and $U$ and $\mc U$. Sites of $V$ are the ones laying on the light gray region.  Sites in $D$ lay on the dotted line and  sites of $U$ are marked as gray balls.
Elements of $\mc U$ are edges marked with a thick black segment having jump rate equal to $\alpha/n$ (slow bonds). Any other edges have rate $1$.}
\label{fig2}
\end{figure}
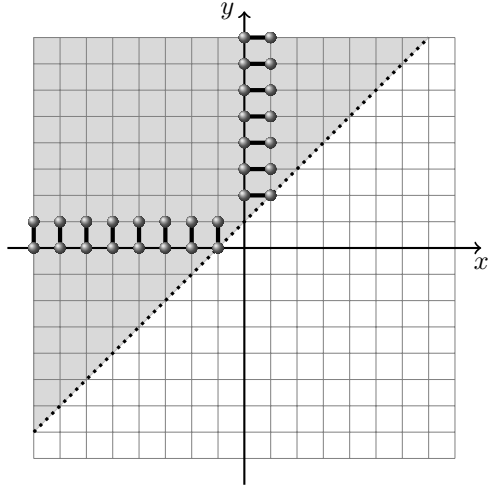
\noindent By $\bf E_{(x,y)}$, and $\bf P_{(x,y)}$ we  denote the corresponding probability and expectation when starting from $(x,y)\in V$.
The goal of this section is to prove the following result.
\begin{proposition}
	\label{prop:2dlocaltimes}
	There exists a constant $c>0$ such that for all $(x,y)\in V$, all $n\in\bb N$, and all $t\geq 0$,
	\begin{equation}
	\begin{aligned}
	{\bf E}_{(x,y)}\Big[L_{tn^2}  \Big(D\backslash\{(0,1)\}\Big)\Big]  &\;\leq\; cn\sqrt{t}, \quad\text{and}\\
	 {\bf E}_{(x,y)}\Big[L_{tn^2}\Big( \{(0,1)\}\Big)\Big]  &\;\leq\; c\log (tn^2).	
	\end{aligned}
	\end{equation}
\end{proposition}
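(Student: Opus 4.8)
The plan is to reduce the two-dimensional estimate to the one-dimensional random walk via a projection (lumping) argument. The key observation is that for the random walk $({\bf X}_t,{\bf Y}_t)$ on $V$, what matters for hitting the diagonal $D$ is essentially the difference coordinate $d_t := {\bf Y}_t - {\bf X}_t \geq 1$, while the slow bonds in $\mc U$ only affect the dynamics in a region bounded away from the diagonal (namely $U$, where one coordinate is in $\{0,1\}$ and the walk is far from $D$). So I would first set up a coupling/comparison with a \emph{homogeneous} random walk on $V$ (all rates $1$) and show that the extra time the inhomogeneous walk spends away from $D$ because of the slow bonds does not decrease the occupation time of $D$ by more than a constant factor — if anything, slow bonds make it \emph{harder} to leave the region near the slow edges, but near the diagonal the dynamics are unchanged, so the diagonal local time of the slowed walk is dominated by that of a suitably time-changed homogeneous walk.

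\textbf{Step 1 (projection to the difference coordinate).} For the homogeneous walk on $V$, the coordinate $d_t = {\bf Y}_t - {\bf X}_t$ is itself a Markov chain: a simple random walk on $\{1,2,3,\dots\}$ with a partially reflecting boundary at $1$ (reflection because the walk cannot cross $D$), run at rate $2$. Moving within a fixed diagonal (i.e.\ $d_t$ constant) corresponds to the orthogonal coordinate, say $s_t = {\bf X}_t + {\bf Y}_t$. The set $D$ is exactly $\{d_t = 1\}$, and $D\setminus\{(0,1)\}$ is $\{d_t=1\}$ minus one site on that diagonal. Thus $L_{tn^2}(D) = \int_0^{tn^2}{\bf 1}_{\{d_s=1\}}\,ds$ is the local time at $1$ of a one-dimensional reflected random walk. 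By the standard local limit theorem / reflection-principle bound for the simple random walk, the expected local time at a fixed site up to time $tn^2$ is $O(\sqrt{tn^2}) = O(n\sqrt t)$, giving the first inequality for the homogeneous walk. The slowed walk is handled by noting the slow edges lie in $U$, which is disjoint from a neighborhood of $D$; a coupling shows that the slowed walk's excursions away from $D$ are stochastically longer, so $L_{tn^2}(D)$ for the slowed walk is no larger than for the homogeneous walk run for the same time, up to a constant (here I would invoke the one-dimensional estimates of Section~\ref{s5} that are stated for exactly this purpose).

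\textbf{Step 2 (the single site $(0,1)$ — the logarithmic bound).} The second, sharper estimate requires understanding why a \emph{single} site on the diagonal is visited only $O(\log(tn^2))$ times rather than $O(n\sqrt t)$ times. Once the walk is at $(0,1)$, to return to $(0,1)$ it must either move along $D$ and come back — but $(0,1)$ sits at the corner where the slow edges attach, so leaving along $D$ toward the $U$-region means crossing a slow bond of rate $\alpha/n$, which takes time of order $n$ — or it must leave $D$ into the bulk and return to $D$ at precisely the site $(0,1)$, which for a two-dimensional-like walk has probability $O(1/\log(\text{distance}))$ by the recurrence/Green's function behavior of the planar walk. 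More precisely: restricted to the region near $(0,1)$, the difference coordinate performs a reflected 1-d walk while the sum coordinate performs a free 1-d walk, and returning to the \emph{point} $(0,1)$ requires both coordinates to return simultaneously, which is a two-dimensional return event. The expected number of returns to the origin of a planar-type walk in time $m$ is $O(\log m)$, hence $L_{tn^2}(\{(0,1)\}) \leq c\log(tn^2)$. The slow bonds at $(0,1)$ only help (they make it harder to come back along $D$), so the bound is not spoiled.

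\textbf{Main obstacle.} The hard part is \textbf{Step 2}, and specifically making rigorous the claim that returns to the single site $(0,1)$ behave two-dimensionally in this inhomogeneous, half-space geometry. The walk is on a wedge $V$ with a reflecting diagonal, and the slow edges break translation invariance exactly at the corner site $(0,1)$; one cannot directly quote a clean planar-walk Green's function estimate. I expect the argument to go through a decomposition of an excursion from $(0,1)$: with probability bounded below the walk immediately crosses a slow bond (costing time $\gtrsim n$, which already limits the number of returns by time $tn^2$ to $O(tn)$ — too weak by itself), and otherwise it enters the homogeneous bulk, where I would couple with a genuine planar simple random walk reflected at a line and use the known $O(\log m)$ bound on the expected number of returns to a boundary point, transferred back by the coupling of Section~\ref{s5}. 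Reconciling the ``crossed a slow bond'' branch (which individually gives only a polynomial bound) with the ``stayed in the bulk'' branch (which gives the logarithm) into a single clean $O(\log(tn^2))$ estimate — via a renewal argument on successive returns to $(0,1)$ — is the delicate point; the lumping must be set up so that the number-of-returns random variable is dominated by a geometric-type random variable with success probability $\gtrsim 1/\log(tn^2)$.
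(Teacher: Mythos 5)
Your Step~1 projection to the difference coordinate $d_t=\mathbf{Y}_t-\mathbf{X}_t$ is sound for the \emph{homogeneous} reflected walk and is morally parallel to what the paper does (the paper reflects the wedge into a quadrant, then into the full plane, and along the way also observes that $X-Y$ is a one-dimensional simple random walk). The genuine gap is in how you handle the slow bonds, and it occurs already in Step~1. Your premise that ``the slow bonds only affect the dynamics in a region bounded away from the diagonal'' is false: the slow bonds $\{(0,2),(1,2)\}$ and $\{(-1,0),(-1,1)\}$ each have an endpoint, namely $(1,2)$ and $(-1,0)$, lying \emph{on} $D$. These slow bonds change the difference coordinate, so $d_t$ fails to be Markov precisely because of transitions adjacent to $D$. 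Moreover, the assertion that ``slow bonds only help'' and that the slowed walk's local time on $D$ is dominated by the homogeneous walk's is not justified and is, if anything, pointing the wrong way: at a site like $(1,2)$ the total exit rate drops from $2$ to $1+\alpha/n$, so the slowed walk \emph{lingers longer} on that diagonal site. Your coupling sketch does not control this, and it is exactly the mechanism responsible for the $\log n$ enhancement of the correlations in this model.

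The ingredient you are missing is the paper's Lemma~\ref{lem:crossings}: the expected number $C_{tn^2}$ of slow-bond crossings up to time $tn^2$ is bounded uniformly in $n$, in $t$, and in the starting point. With that, the proof decomposes $[0,tn^2]$ into excursions between successive crossings; in each excursion the walk stays inside one of the three sub-regions delimited by the slow-bond ``walls'', where it can be \emph{exactly} coupled with the homogeneous reflected walk (Lemma~\ref{lem:localtimetriangle}). Summing an $O(\sqrt{t}\,n)$ per-excursion bound over an $O(1)$-expectation number of excursions gives the first estimate, and the same decomposition plus the lumping to the free planar walk and a local CLT (Proposition~\ref{prop:2dwalk}) gives the $O(\log(tn^2))$ bound on $\{(0,1)\}$. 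Your Step~2 renewal sketch tracks successive returns to $(0,1)$ and tries to dominate them by a geometric with success probability $\gtrsim 1/\log(tn^2)$; the paper instead tracks successive slow-bond \emph{crossings}, which sidesteps the awkward case split you identify (``crossed a slow bond'' versus ``stayed in the bulk'') and makes the reduction to the homogeneous walk clean. As written, your proposal does not close this gap, which you yourself flag as the delicate point.
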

To prove Proposition~\ref{prop:2dlocaltimes}, we estimate first in Lemma~\ref{lem:localtimetriangle} the local time of a simple random walk confined to the boundary of the set $$W\;\overset{\text{def}}{=}\;\{(x,y)\in \bb Z^2:\, 0\leq x\leq y\}\,,$$
which is $V$ intersected with the first quadrant shifted by the vector $(1,2)$. In plain words we identified the vertex $(1,2)$ in $V$ with the origin, which is a change only of notational nature. Its proof consists on  a comparison argument, which is the content of Proposition~\ref{prop:monotonicity}. Afterwards, in Lemma~\ref{lem:crossings}, we show that the expected number of jumps over the set of slow bonds (i.e., those with rates $\alpha/n$) is finite. Finally, with all that at hand, we are able to finish the proof.

We denote by $({\bf X}^{\mytriangle}, {\bf Y}^{\mytriangle})$ the continuous time simple random walk on $W$ that jumps from a site $z_1\in W$ to any fixed neighbouring site $z_2\in W$ at rate $1$, i.e., the simple random walk reflected at the boundary of $W$ (which takes a triangular shape, see Figure~\ref{fig3b}). In particular the total jump rate out of $z_1\in W$ is equal to the number of nearest neighbours of $z_1$ that lay inside $W$. Expectation with respect to $({\bf X}^{\mytriangle}, {\bf Y}^{\mytriangle})$ conditioned to start at $(x,y)\in W$ is denoted by ${\bf E}_{(x,y)}^{\mytriangle}$. 
\begin{figure}[H]
	\centering
	\begin{tikzpicture}
	\begin{scope}[scale=0.7,xshift=0cm]
	\filldraw[fill=lightgray, draw=lightgray] (0,0)--(0,6)--(6,6)--cycle;
	%%%%%%%grid %%%%%%%%%%%%%%%%
	\draw[step=1cm,gray,very thin] (0,0) grid (6,6);
	\filldraw[fill=white, draw=white] (0,0)--(6.2,0)--(6.2,6)--(6,6)--cycle;
	%%%%%%%%%%% axes %%%%%%%%%%%%%%
	\draw[->, thick] (-0.5,0)--(6.5,0) node[below]{$x$};
	\draw[->, thick] (0,-0.5)--(0,6.5) node[left]{$y$};
	%%%%%%%%%%%%% diagonal %%%%%%%%%%%%%%%%% 
	\draw[ thick] (0,0)--(6,6);
	
	%       %%%%%%%%%%%%% rates %%%%%%%%%%%%%%%%% 
	\begin{scope}[xshift=1cm,yshift=1cm]
	\draw[very thick,->] (3,3)--(3,4);
	%\draw[very thick,->] (3,3)--(3,2);
	%\draw[very thick,->] (3,3)--(4,3);
	\draw[very thick,->] (3,3)--(2,3);
	\draw (2.4,3) node[below]{\small $1$};
	\draw (3,3.6) node[right]{\small $1$};
	%\draw (2.3,3) node[above]{$\pfrac{1}{2}$};
	%\draw (3.7,3) node[below]{$\pfrac{1}{2}$};
	\end{scope}
	
	\begin{scope}[xshift=-3cm,yshift=0cm]
	\draw[very thick,->] (3,3)--(3,4);
	\draw[very thick,->] (3,3)--(3,2);
	\draw[very thick,->] (3,3)--(4,3);
	\draw (3,2.3) node[right]{\small $1$};
	\draw (3,3.7) node[right]{\small $1$};
	\draw (3.7,3) node[below]{\small $1$};
	\end{scope}

	\end{scope}
	\end{tikzpicture}
	\caption[Illustration]{For  $({\bf X}^{\mytriangle},{\bf Y}^{\mytriangle})$
		any jump rate is equal to $1$.}
	\label{fig3b}
\end{figure}
\begin{lemma}\label{lem:localtimetriangle}
	There exists a constant $c>0$ such that for all $t\geq 0$ and all $(x,y)\in W$,
	\begin{equation}
	{\bf E}_{(x,y)}^{\mytriangle}\big[L_t(\partial W)\big]\;\leq\; c\sqrt{t}\,.
	\end{equation}
\end{lemma}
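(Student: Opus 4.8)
The plan is to reduce the statement to a uniform pointwise bound on the probability of being on $\partial W$ at time $s$, and then to obtain that bound from a Gaussian estimate on the transition kernel of the reflected walk.

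Write $\tilde S_s=({\bf X}^{\mytriangle}_s,{\bf Y}^{\mytriangle}_s)$ and let $p_s(z,z')$ be its transition kernel; note that $\tilde S$ is simply the rate-one-per-edge random walk on the induced subgraph $W$ of $\bb Z^2$, hence non-explosive and reversible with respect to the counting measure on $W$. By \eqref{eq:local_time} and Tonelli,
\[
{\bf E}^{\mytriangle}_{(x,y)}\big[L_t(\partial W)\big]\;=\;\int_0^t\sum_{z'\in\partial W}p_s\big((x,y),z'\big)\,ds\,.
\]
The first step is to prove that there is $c>0$ with $\sum_{z'\in\partial W}p_s(z,z')\le c/\sqrt s$ for every $z\in W$ and every $s\ge 1$. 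Since $\sum_{z'\in\partial W}p_s(z,z')\le 1$ for all $s$, this already yields ${\bf E}^{\mytriangle}_{(x,y)}[L_t(\partial W)]\le\int_0^{1\wedge t}1\,ds+\int_{1\wedge t}^{t}\tfrac{c}{\sqrt s}\,ds\le(1+2c)\sqrt t$, using $t\le\sqrt t$ for $t\le 1$, which is the claim.

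For the pointwise bound the essential input is the Gaussian estimate
\[
p_s(z,z')\;\le\;\frac{C_1}{s}\,\exp\!\Big(-\frac{\Vert z-z'\Vert^2}{C_2\,s}\Big),\qquad z,z'\in W,\ s\ge 1\,.
\]
Granting it, one observes that $\partial W$ is contained in the union of the two half-lines $\{(0,k):k\ge 0\}$ and $\{(k,k):k\ge 0\}$, and that summing the Gaussian tail over either half-line is at most $C_3\sqrt s$ for $s\ge 1$ (comparison with a Gaussian integral); hence $\sum_{z'\in\partial W}p_s(z,z')\le 2C_1C_3/\sqrt s$. To obtain the Gaussian estimate itself I would argue in the two classical steps. (i) The on-diagonal bound $p_s(z,z)\le C/s$ for $s\ge 1$, uniformly in $z\in W$: this follows from a Nash inequality for the graph $W$, which I would deduce from the Nash inequality on $\bb Z_{\ge 0}\times\bb Z_{\ge 0}$ --- itself a consequence, via the folding $k\mapsto|k|$, of the one on $\bb Z$ --- by extending functions on $W$ to $\bb Z_{\ge 0}^2$ through reflection across the diagonal $\{x=y\}$, an operation under which the Dirichlet form, the $L^1$-norm and the square of the $L^2$-norm grow only by a bounded factor. (ii) The upgrade from the on-diagonal bound to the off-diagonal Gaussian bound, by the Carne--Varopoulos/Davies method. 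Alternatively, one may invoke directly the two-sided Gaussian bounds of Delmotte, which apply here because the graph $W$ has the same (quadratic) volume growth and a Poincaré inequality comparable to those of $\bb Z^2$.

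The main obstacle is precisely this Gaussian estimate, and within it the apex of $W$, whose interior angle is the acute angle $\pi/4$: one cannot unfold $W$ into a half-plane, so the heat kernel is not produced by a single reflection. The point is that the apex is nonetheless benign --- reflecting $W$ across its two sides tiles a neighbourhood of the apex by eight congruent copies of $W$, so that $|B_W(z,r)|\ge c\,r^2$ and the Nash (or Poincaré) inequality hold uniformly in $z\in W$, merely with a worse constant --- and the reflection step in (i) is exactly what turns this heuristic into a proof. Everything else (the one-line reductions, the summation of the Gaussian over $\partial W$, and the Carne--Varopoulos step) is routine.
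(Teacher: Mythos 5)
Your proposal is correct, but it takes a genuinely different route from the paper. The paper \emph{unfolds} the wedge $W$ probabilistically: Proposition~\ref{prop:lumping} is used twice, first to identify the reflected walk on $W$ with the image (under folding across $\Wd$) of a rate-$\tfrac12$ walk on the quarter plane $\bb Z_{\geq 0}^2$, after Proposition~\ref{prop:monotonicity} absorbs the resulting rate mismatch on $\Wd$; and then a second lumping identifies that walk with the image of a free rate-$\tfrac12$ walk $(X,Y)$ on $\bb Z^2$. The problem then becomes bounding the local time of $(X,Y)$ on three one-dimensional sets $A_1,A_2,A_3$, which collapses to Proposition~\ref{prop:1dwalk} because $X-Y$ (and $X$, $Y$ separately) are one-dimensional simple random walks, so the local time of the diagonal is just the local time of a one-dimensional walk at the origin. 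You instead keep the process on $W$ and aim directly at a uniform Gaussian upper bound for its heat kernel, obtained by a Nash inequality (proved by extending test functions by reflection across $\Wd$ and the axes, the ``inverse'' of the paper's lumping) plus Carne--Varopoulos/Davies, and then sum the Gaussian over the two boundary half-lines to get the $O(1/\sqrt s)$ on-boundary probability. Both are sound; the main thing to be careful about on your end is to route the Nash tensorization through the equivalent on-diagonal heat-kernel bound (which tensorizes trivially), rather than tensorizing the Nash inequality directly, and that the $\pi/4$ apex is harmless because eight reflected copies of $W$ tile a neighbourhood of it, so volume growth and the Dirichlet-form comparison stay uniform. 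The trade-off is clear: the paper's argument is short, self-contained and purely probabilistic (coupling, lumping, one-dimensional local time), tailored to the fact that $\pi/4$ divides $\pi$ so the wedge unfolds exactly onto $\bb Z^2$; your argument imports heavier analytic machinery but would work verbatim for any graph-wedge with quadratic volume growth and a Poincar\'e inequality, and it yields a stronger intermediate result, namely the full Gaussian heat-kernel upper bound on $W$.
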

To prove the above lemma we will need two additional results. To introduce the first one, we remind the reader that a continuous time Markov chain on a countable set $\mc E$ can be constructed from a transition probability $p$ on $\mc E$ and a bounded function $\lambda:\mc E\to (0,\infty)$ as follows:
\begin{itemize}
	\item[(1)] sample a discrete time Markov chain $(\xi_n)_{n\geq 0}$ with transition probability~$p$;
	\item[(2)] sample a sequence of independent random variables $(\tau_n)_{n\geq 0}$ such that $\tau_n$ is exponentially distributed with rate $\lambda(\xi_n)$ and define the successive sequence of jump times via $T_0=0$ and $T_n=\tau_n +T_{n-1}$ for $n\geq 1$;
	\item[(3)] finally, define the continuous time Markov chain $Z$ via $$Z_t\;=\;\xi_n\textbf{1}_{\{T_n\leq t< T_{n+1}\}}\,.$$
\end{itemize} 
To continue, we fix a transition probability $p$ on $\mc E$ and for any $a,b$ such that $0 < a\leq b <\infty$, we  denote by $Z^{[a,b]}$ the continuous time Markov chain with transition probability $p$ and such that its field of rates $(\lambda_{[a,b]}(x))_{x\in G}$ is such that $\lambda_{[a,b]}(x)\in[a,b]$ for all $x\in G$. We denote the expectation with respect to $Z^{[a,b]}$ started in $z\in \mc E$ by ${\bf E}_z^{[a,b]}$.
\begin{proposition}\label{prop:monotonicity}
Fix $0 < a<b <c < d <\infty$, and define $\Lambda\overset{\text{def}}{=}\sup_{x\in \mc E}\frac{\lambda_{[c,d]}(x)}{\lambda_{[a,b]}(x)}$.  For any  $A\subseteq \mc E$ and any $z\in \mc E$,
\begin{equation}\label{claim}
{\bf E}_z^{[c,d]}\big[L_t(A)\big]\;\leq\; {\bf E}_z^{[a,b]}\big[L_{\Lambda t}(A)\big]\,.
\end{equation}
\end{proposition}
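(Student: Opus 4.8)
The plan is to realise both chains $Z^{[a,b]}$ and $Z^{[c,d]}$ on one common probability space via a coupling on which the inequality $L_t^{[c,d]}(A)\le L_{\Lambda t}^{[a,b]}(A)$ holds \emph{pathwise}; taking expectations then yields \eqref{claim}. Since both chains share the transition probability $p$, I would sample a single discrete skeleton $(\xi_k)_{k\ge 0}$ with transitions $p$ and a single i.i.d.\ sequence $(e_k)_{k\ge0}$ of rate-$1$ exponential variables, and then \emph{define} the holding times by $\tau_k^{[c,d]}:=e_k/\lambda_{[c,d]}(\xi_k)$ and $\tau_k^{[a,b]}:=e_k/\lambda_{[a,b]}(\xi_k)$, with jump times $T_n^{[\cdot]}:=\sum_{k<n}\tau_k^{[\cdot]}$; both resulting continuous-time processes then have the correct law, and the construction is non-explosive because the rates are bounded. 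Writing $r_k:=\lambda_{[c,d]}(\xi_k)/\lambda_{[a,b]}(\xi_k)$ one has $\tau_k^{[a,b]}=r_k\,\tau_k^{[c,d]}$, and the hypotheses $b<c$ and $\Lambda\le d/a<\infty$ give $1\le r_k\le \Lambda$ for every $k$.

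Next I would introduce the time change $\phi:[0,\infty)\to[0,\infty)$ that is continuous, piecewise linear, satisfies $\phi(T_n^{[c,d]})=T_n^{[a,b]}$ for all $n$, and has slope $r_n$ on each interval $[T_n^{[c,d]},T_{n+1}^{[c,d]})$; it is strictly increasing with $1\le\phi'\le\Lambda$ away from the (discrete) set of breakpoints. The key observation is that $\phi$ maps $[T_n^{[c,d]},T_{n+1}^{[c,d]})$ onto $[T_n^{[a,b]},T_{n+1}^{[a,b]})$, so the fast chain at time $s$ and the slow chain at time $\phi(s)$ occupy the same skeleton state: $Z_s^{[c,d]}=Z_{\phi(s)}^{[a,b]}$ for all $s\ge 0$. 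Hence
\begin{equation*}
L_t^{[c,d]}(A)=\int_0^t\mathbf 1_{\{Z_s^{[c,d]}\in A\}}\,ds=\int_0^t\mathbf 1_{\{Z_{\phi(s)}^{[a,b]}\in A\}}\,ds\,,
\end{equation*}
and the substitution $u=\phi(s)$, together with $\phi'\ge 1$, bounds the last integral by $\int_0^{\phi(t)}\mathbf 1_{\{Z_u^{[a,b]}\in A\}}\,du=L^{[a,b]}_{\phi(t)}(A)$.

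It then remains to check $\phi(t)\le\Lambda t$. Letting $M$ be the number of jumps of the fast chain strictly before time $t$ (so $T_M^{[c,d]}\le t<T_{M+1}^{[c,d]}$), one has
\begin{equation*}
\phi(t)=\sum_{k<M}r_k\,\tau_k^{[c,d]}+r_M\,(t-T_M^{[c,d]})\;\le\;\Lambda\Big(\sum_{k<M}\tau_k^{[c,d]}+(t-T_M^{[c,d]})\Big)=\Lambda t\,.
\end{equation*}
Since $L^{[a,b]}_{\cdot}(A)$ is nondecreasing, this yields $L_t^{[c,d]}(A)\le L^{[a,b]}_{\Lambda t}(A)$ pathwise on the coupled space, and applying ${\bf E}_z^{[c,d]}$ finishes the proof.

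The argument is largely bookkeeping; the one point that needs genuine care is the identity $Z_s^{[c,d]}=Z_{\phi(s)}^{[a,b]}$ and, relatedly, keeping the direction of the change-of-variables inequality correct --- it is essential that $b\le c$, which is exactly what forces $r_k\ge 1$ and hence $\phi'\ge 1$. An equivalent route, avoiding the time change, is to write $L_t^{[c,d]}(A)=\sum_{k<M}\mathbf 1_{\{\xi_k\in A\}}\tau_k^{[c,d]}+\mathbf 1_{\{\xi_M\in A\}}(t-T_M^{[c,d]})$ and to verify directly, using $1\le r_k\le\Lambda$, that by time $\Lambda t$ the slow chain has already completed its first $M$ holding times and has spent at least $t-T_M^{[c,d]}$ time units at $\xi_M$; comparing term by term then gives the same bound.
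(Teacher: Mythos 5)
Your proof is correct and follows essentially the same strategy as the paper: couple $Z^{[a,b]}$ and $Z^{[c,d]}$ through a common skeleton chain and deterministically rescaled holding times, then compare local times pathwise. The paper states the pathwise comparison informally via two bullet-point properties and asserts the claim follows; your explicit piecewise-linear time change $\phi$ with $1\le\phi'\le\Lambda$ (relying crucially on $b<c$) is a clean and complete way of carrying out exactly that last step.
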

The second result is about projections (also called lumping) of continuous time Markov chains.
\begin{proposition}\label{prop:lumping}
Let $\mc E$ be a countable set, and consider a bounded function $\zeta:\mc E\times \mc E\to[0,\infty)$. Let $(X_t)_{t\geq 0}$ be the continuous time Markov chain with state space $\mc E$  and jump rates $\{\zeta(x,y)\}_{x,y\in \Omega}$. 
Fix an equivalence relation $\sim $ on $\mc E$ with equivalence classes $\mc E^{\sharp}=\{[x]:\, x\in \mc E\}$ and assume that $\xi$ satisfies
\begin{align}\label{eq46}
\sum_{y'\sim y} \zeta (x,y')\;=\; \sum_{y'\sim y} \zeta (x', y')
\end{align}
whenever $x\sim x'$. Then, $([X_t])_{t\geq 0}$ is a Markov chain with state space $\mc E^{\sharp}$  and jump rates $\zeta ([x], [y])= \sum_{y'\sim y} \zeta (x,y')$.
\end{proposition}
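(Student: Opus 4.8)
The plan is to prove the statement at the level of generators, exploiting the fact that hypothesis \eqref{eq46} is precisely the condition under which the putative lumped rates $\bar\zeta([x],[y]):=\sum_{y'\sim y}\zeta(x,y')$ are well defined, i.e.\ independent of the representative $x$ chosen for the class $[x]$. Let $\pi:\mc E\to\mc E^{\sharp}$ denote the canonical projection $\pi(x)=[x]$; let $\mc L$ be the generator of $(X_t)_{t\geq 0}$, acting on bounded $f:\mc E\to\bb R$ by $(\mc L f)(x)=\sum_{y}\zeta(x,y)(f(y)-f(x))$; and let $\mc L^{\sharp}$ be the operator acting on bounded $G:\mc E^{\sharp}\to\bb R$ by $(\mc L^{\sharp}G)(c)=\sum_{c'\in\mc E^{\sharp}}\bar\zeta(c,c')(G(c')-G(c))$. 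Since every chain considered in this paper has total jump rate out of any state bounded by a constant (here $4$), both $X$ and the chain generated by $\mc L^{\sharp}$ are non-explosive; note also that $\sum_{c'}\bar\zeta(c,c')=\sum_{y'}\zeta(x,y')$, so the lumped rates have bounded total rate as well.

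The first step I would carry out is the intertwining identity: for every bounded $G:\mc E^{\sharp}\to\bb R$, writing $g:=G\circ\pi$,
\begin{equation*}
(\mc L g)(x)\;=\;(\mc L^{\sharp}G)([x])\qquad\text{for every }x\in\mc E.
\end{equation*}
This is obtained by grouping the sum defining $(\mc L g)(x)=\sum_{y}\zeta(x,y)\big(G([y])-G([x])\big)$ according to the equivalence class of $y$ and then using \eqref{eq46} to identify $\sum_{y:[y]=c}\zeta(x,y)$ with $\bar\zeta([x],c)$; this is the only place where the hypothesis is used. In particular, $\mc L$ maps functions constant on equivalence classes to functions of the same type, and there its action is conjugate to $\mc L^{\sharp}$ through $\pi$.

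Next I would invoke the Dynkin martingale: since $\zeta$ has bounded total rates, for every bounded $g:\mc E\to\bb R$ the process $g(X_t)-g(X_0)-\int_0^t(\mc L g)(X_s)\,ds$ is a martingale for the natural filtration $\mc F_t=\sigma(X_s:s\leq t)$. Applying this with $g=G\circ\pi$ and using the intertwining identity yields that
\begin{equation*}
M_t^{G}\;:=\;G([X_t])-G([X_0])-\int_0^t(\mc L^{\sharp}G)([X_s])\,ds
\end{equation*}
is an $\mc F_t$-martingale for every bounded $G:\mc E^{\sharp}\to\bb R$. Because $M_t^{G}$ is measurable with respect to the smaller filtration $\mc G_t:=\sigma([X_s]:s\leq t)\subseteq\mc F_t$, a routine application of the tower property upgrades it to a $\mc G_t$-martingale. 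Hence $([X_t])_{t\geq 0}$, together with $(\mc G_t)$, solves the martingale problem for $\mc L^{\sharp}$ with the initial law of $[X_0]$.

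To conclude, since $\mc L^{\sharp}$ has bounded rates, the martingale problem for $\mc L^{\sharp}$ has a unique solution, namely the (non-explosive) continuous-time Markov chain on $\mc E^{\sharp}$ with jump rates $\bar\zeta$ built by the usual hold-and-jump recipe; therefore $([X_t])_{t\geq 0}$ is exactly that chain, which is the assertion. The only genuinely delicate points are the reduction of the filtration from $\mc F_t$ to $\mc G_t$ and the appeal to uniqueness of the martingale problem; the rest is the algebraic identity above. As an alternative that avoids the martingale-problem machinery, one can propagate $\mc L(G\circ\pi)=(\mc L^{\sharp}G)\circ\pi$ through Kolmogorov's backward equation to obtain $P_t(G\circ\pi)=(P_t^{\sharp}G)\circ\pi$ for the respective semigroups, which is precisely the claimed lumping property at the level of transition probabilities.
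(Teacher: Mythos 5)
Your proof is correct, but it takes a genuinely different route from the paper's. The paper works with the jump-chain decomposition: it first shows that the embedded (skeleton) discrete-time chain of $([X_t])$ is Markov, citing the discrete-time lumping criterion from Levin--Peres--Wilmer, and then separately verifies that the holding times are exponential with the lumped rates; both facts follow directly from \eqref{eq46}. You instead work at the level of generators: you derive the intertwining identity $\mc L(G\circ\pi)=(\mc L^{\sharp}G)\circ\pi$ (which is exactly where \eqref{eq46} enters, as the well-definedness of $\bar\zeta$), feed it into the Dynkin martingale, shrink the filtration to $\sigma([X_s]:s\leq t)$ via the tower property, and then appeal to well-posedness of the martingale problem for bounded-rate generators. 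The paper's route is more elementary and self-contained modulo the cited discrete-time lemma, and makes the mechanics of ``why lumping works'' very concrete; your route is slightly heavier machinery (uniqueness of the martingale problem) but is more conceptual and generalizes cleanly to, say, non-conservative or time-inhomogeneous settings, and you correctly flag the two delicate points (the filtration reduction and the appeal to uniqueness). One remark worth keeping in mind: the hypothesis in the proposition only asserts that $\zeta$ itself is bounded, not that the total exit rates $\sum_y\zeta(x,y)$ are; your argument (and in fact the very definition of the chain) requires the stronger boundedness of the exit rates, which you correctly note holds for all the nearest-neighbour chains actually used in the paper, but it would be better to state this as an explicit standing assumption rather than an observation en passant.
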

We first prove Proposition~\ref{prop:monotonicity}, afterwards Proposition~\ref{prop:lumping} and finally we prove Lemma~\ref{lem:localtimetriangle}.
\begin{proof}[Proof of Proposition~\ref{prop:monotonicity}]
To prove \eqref{claim} we use a coupling argument.
We do so by first sampling the discrete time Markov chain $(\xi_n)_{n\geq 0}$ as alluded above, and we intend to construct $Z^{[a,b]}$ and $Z^{[c,d]}$ both from the same realization of $(\xi_n)_{n\geq 0}$.
To that end, we consider an independent field of Poisson clocks $(N_x^{[c,d]})_{x\in \mc E}$ such that for any $x\in \mc E$ the rate of $N_x^{[c,d]}$ equals $\lambda_{[c,d]}(x)$. We further define $$N_x^{[a,b]}(t)\;\overset{\text{def}}{=}\; N_x^{[c,d]}\Big(\frac{t}{\lambda_{[c,d]}(x)}\lambda_{[a,b]}(x)\Big)$$ and it readily follows that for each $x\in \mc E$ the process $N_x^{[a,b]}$ is a Poisson process with rate $\lambda_{[a,b]}(x)$.
Hence, it follows from the construction outlined before the statement of Proposition~\ref{prop:monotonicity} that the construction above yields indeed a coupling of $Z^{[a,b]}$ and $Z^{[c,d]}$.

This coupling has the following two properties, which immediately proves \eqref{claim}. Denote by $Z_{[0,t]}^{[c,d]}$ the sequence of visited points by the process $Z_{[0,t]}$ until time $t$, with an analogous definition for $Z^{[a,b]}_{[0,\Lambda t]}$.
\begin{itemize}
	\item[(1)] There exists some $u\in [0,\Lambda t]$ such that $Z_{[0,t]}^{[c,d]}=Z^{[a,b]}_{[0,u]}$. That is, the sequence $Z_{[0,t]}^{[c,d]}$ is an initial piece of  $Z^{[a,b]}_{[0,\Lambda t]}$.
	\item[(2)] Given $x \in Z_{[0,t]}^{[c,d]}$, then at its $k$-th visit to $x$ the holding time at that point of $Z^{[a,b]}$ is larger than the one of $Z^{[c,d]}$.
\end{itemize}
	\end{proof}
	\begin{proof}[Proof of Proposition~\ref{prop:lumping}]
Let $P$ be the transition matrix of the skeleton chain of $(X_t)_{t\geq 0}$ (i.e., of the underlying discrete time Markov chain). Assumption \eqref{eq46} implies that
\begin{align*}
P(x, [y])\;=\; P(x', [y])
\end{align*}
 whenever $x\sim x'$. It then follows from \cite[Lemma 2.5, pp. 25]{yuval} that the skeleton chain of $([X_t])_{t\geq 0}$ is a discrete time Markov chain with transition matrix given by  $P^{\sharp}([x], [y]):=P(x, [y])$. Thus, it remains to show that the holding times of $([X_t])_{t\geq 0}$ are exponentially distributed with rates $\Big\{\sum_{[y]}\zeta([x],[y])\Big\}_{[x]\in \mc E^\sharp}$. Yet, this is, as well, a consequence of \eqref{eq46}. Hence, we can conclude the proof.
\end{proof}
\begin{proof}[Proof of Lemma~\ref{lem:localtimetriangle}]
	The proof comes in two steps.
		\begin{figure}[H]
	\centering
	\begin{tikzpicture}
	\begin{scope}[scale=0.7]
	\filldraw[fill=lightgray, draw=lightgray] (0,0)--(0,6)--(6,6)--(6,0)--cycle;
	%%%%%%%grid %%%%%%%%%%%%%%%%
	\draw[step=1cm,gray,very thin] (0,0) grid (6,6);
	%%%%%%%%%%% axes %%%%%%%%%%%%%%
	\draw[->, thick] (-0.5,0)--(6.5,0) node[below]{$x$};
	\draw[->, thick] (0,-0.5)--(0,6.5) node[left]{$y$};
	%       %%%%%%%%%%%%% rates %%%%%%%%%%%%%%%%% 
	\begin{scope}[xshift=1cm,yshift=1cm]
	\draw[very thick,->] (3,3)--(3,4);
	\draw[very thick,->] (3,3)--(3,2);
	\draw[very thick,->] (3,3)--(4,3);
	\draw[very thick,->] (3,3)--(2,3);
	\draw (3,2.25) node[left]{$\pfrac{1}{2}$};
	\draw (3,3.75) node[right]{$\pfrac{1}{2}$};
	\draw (2.3,3) node[above]{$\pfrac{1}{2}$};
	\draw (3.7,3) node[below]{$\pfrac{1}{2}$};
	\end{scope}
	
	\begin{scope}[xshift=-3cm,yshift=-1cm]
	\draw[very thick,->] (3,3)--(3,4);
	\draw[very thick,->] (3,3)--(3,2);
	\draw[very thick,->] (3,3)--(4,3);
	\draw (3,2.2) node[right]{$\pfrac{1}{2}$};
	\draw (3,3.8) node[right]{$\pfrac{1}{2}$};
	\draw (3.8,3) node[below]{$\pfrac{1}{2}$};
	\end{scope}
	%%%%  subtitles  %%%%%
	\draw (3,-1.5) node[align=left]{Rates of $\big({\bf X}^{\square},{\bf Y}^{\square}\big)$ are \\ everywhere equal to $1/2$.};
	\end{scope}

	\begin{scope}[scale=0.7,xshift=8cm]
	\filldraw[fill=lightgray, draw=lightgray] (0,0)--(0,6)--(6,6)--cycle;
	%%%%%%%grid %%%%%%%%%%%%%%%%
	\draw[step=1cm,gray,very thin] (0,0) grid (6,6);
	\filldraw[fill=white, draw=white] (0,0)--(6.2,0)--(6.2,6)--(6,6)--cycle;
	%%%%%%%%%%% axes %%%%%%%%%%%%%%
	\draw[->, thick] (-0.5,0)--(6.5,0) node[below]{$x$};
	\draw[->, thick] (0,-0.5)--(0,6.5) node[left]{$y$};
	%%%%%%%%%%%%% diagonal %%%%%%%%%%%%%%%%% 
	\draw[ thick] (0,0)--(6,6);
	
	%       %%%%%%%%%%%%% rates %%%%%%%%%%%%%%%%% 
	\begin{scope}[xshift=1cm,yshift=1cm]
	\draw[very thick,->] (3,3)--(3,4);
	%\draw[very thick,->] (3,3)--(3,2);
	%\draw[very thick,->] (3,3)--(4,3);
	\draw[very thick,->] (3,3)--(2,3);
	\draw (2.4,3) node[below]{\small $1$};
	\draw (3,3.6) node[right]{\small $1$};
	%\draw (2.3,3) node[above]{$\pfrac{1}{2}$};
	%\draw (3.7,3) node[below]{$\pfrac{1}{2}$};
	\end{scope}
	
	\begin{scope}[xshift=-3cm,yshift=0cm]
	\draw[very thick,->] (3,3)--(3,4);
	\draw[very thick,->] (3,3)--(3,2);
	\draw[very thick,->] (3,3)--(4,3);
	\draw (3,2.2) node[right]{$\pfrac{1}{2}$};
	\draw (3,3.8) node[right]{$\pfrac{1}{2}$};
	\draw (3.8,3) node[below]{$\pfrac{1}{2}$};
	\end{scope}
	
	%%%%  subtitles  %%%%%
	\draw (3,-2) node[align=left]{$\big([{\bf X}^{\square}],[{\bf Y}^{\square}]\big)$ only lives  on the upper \\ triangle. The  rates out of   diagonal \\ are doubled, but  the rates \textit{towards} \\ the  diagonal remain $1/2$.};
	\end{scope}
	\end{tikzpicture}
	\caption{Relation between $\big({\bf X}^{\square},{\bf Y}^{\square}\big)$ and $\big([{\bf X}^{\square}],[{\bf Y}^{\square}]\big)$.}
	\label{fig3}
\end{figure}

\noindent	\textbf{1st Step.} In this step we show that it is sufficient to estimate the local time of a simple random walk on $$\bb Z_{\geq 0}^2\;\overset{\text{def}}{=}\;\{(x,y)\in\bb Z^2:\, x,y\geq 0\}\,,$$ and we refer the reader to Figure~\ref{fig3b} and~\ref{fig3}  for an illustration of the various random walks that will appear in this part of the proof.
	To that end, let $({\bf X}^{\Box}, {\bf Y}^{\Box})$ be a simple random walk defined on $\bb Z_{\geq 0}^2$ that jumps from $z_1\in\bb Z_{\geq 0}^2$ to a fixed neighbouring site $z_2\in\bb Z_{\geq 0}^2$ at rate $\frac12$. Write
	\begin{equation}
	\Wd\;\overset{\text{def}}{=}\;\{(x,y)\in W:\, x=y\}\,.
	\end{equation}
		Our aim is to show that for any $(x,y)\in W$, 
	\begin{equation}
	\label{eq:localtimeXbar}
	{\bf E}_{(x,y)}^{\mytriangle}\big[L_t(\partial W)\big]\;\leq\; {\bf E}_{(x,y)}^{\Box}\big[L_{2 t}(\partial \bb Z_{\geq 0}^2)\big] + {\bf E}_{(x,y)}^{\Box}\big[L_{2 t}(\Wd)\big]\,,
	\end{equation}where the expectations on the right hand side of  the display above denote the  expectation with respect to  $({\bf X}^{\Box}, {\bf Y}^{\Box})$ started at $(x,y)$.
	To see that~\eqref{eq:localtimeXbar} is true we consider the function $T: \bb Z_{\geq 0}^2\to \bb Z_{\geq 0}^2$ that maps each $z\in \bb Z_{\geq 0}^2$ to its reflection with respect to the diagonal $\Wd$. Note in particular that $T$ is its own inverse, so that we can define an equivalence relation on $\bb Z_{\geq 0}^2$ via
	\begin{equation}
	z_1\sim z_2 \quad \Longleftrightarrow\quad \exists\, n\in \{1,2\}\mbox{ such that } T^n(z_1)=z_2\,. 
	\end{equation}
	Writing $\bf{P}^{\Box}$ for the transition matrix corresponding to the underlying discrete time random walk of $({\bf X}^{\Box}, {\bf Y}^{\Box})$, and by $\{\zeta(x,y)\}_{x,y\in \bb Z_{\geq 0}^2}$ its field of rates, it is easy to see that for any $z_1\sim z_2$ and any $z_3\in\bb Z_{\geq 0}^2$
	\begin{equation}
	\sum_{z_3'\sim z_3}\zeta(z_1,z_3')\;=\;\sum_{z_3'\sim z_3}\zeta(z_2, z_3')\,.
	\end{equation}
	Hence, by Proposition~\ref{prop:lumping} and the fact that the set $\bb Z_{\geq 0}^2/\!\!\sim$ equals $W$, the process $([{\bf X}^{\Box}], [ {\bf Y}^{\Box}])$ can be identified with a simple random walk on $W$ such that its jump rate out of each fixed edge equals $\frac12$ except for those attached to $\Wd$, where the jump rate is $1$. Note in particular that this makes the set of edges directed. Indeed, the jump rate from $x\in \Wd$ to any neighbour $y$ is $1$, whereas the jump  rate  from $y$ to $x$ is $\frac12$. 
	
	Denoting by ${\bf E}_{([x],[y])}^{[\Box]}$ the expectation  with respect to $([{\bf X}^{\Box}], [ {\bf Y}^{\Box}])$ when\break started at $([x],[y])$, we see that as a consequence of Proposition~\ref{prop:monotonicity},
	\begin{equation}
{\bf E}_{(x,y)}^{\mytriangle}\big[L_t(\partial W)\big]\;\leq\; {\bf E}_{([x],[y])}^{[\Box]}\big[L_{2t}([\partial W])\big]\,.
	\end{equation}
	Thus,~\eqref{eq:localtimeXbar} readily follows from last  inequality.\bigskip
	
		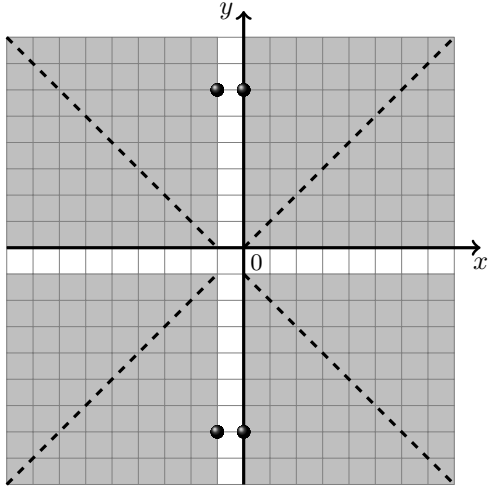
\begin{figure}[!htb]
	\centering
	\begin{tikzpicture}[scale=0.7]
	\filldraw[fill=lightgray, draw=lightgray] (0,0)--(4,0)--(4,4)--(0,4)--cycle;
	
	\begin{scope}[xshift=-4.5cm]
	\filldraw[fill=lightgray, draw=lightgray] (0,0)--(4,0)--(4,4)--(0,4)--cycle;
	\end{scope}
	
	\begin{scope}[xshift=-4.5cm,yshift=-4.5cm]
	\filldraw[fill=lightgray, draw=lightgray] (0,0)--(4,0)--(4,4)--(0,4)--cycle;
	\end{scope}
	
	\begin{scope}[xshift=0cm,yshift=-4.5cm]
	\filldraw[fill=lightgray, draw=lightgray] (0,0)--(4,0)--(4,4)--(0,4)--cycle;
	\end{scope}
	
	%%%%%%%grid %%%%%%%%%%%%%%%%
	\draw[step=0.5cm,gray,very thin] (-4.5,-4.5) grid (4,4);
	
	%%%%%%%%%%% axes %%%%%%%%%%%%%%
	\draw[->, very thick] (-4.5,0)--(4.5,0) node[below]{$x$};
	\draw[->, very thick] (0,-4.5)--(0,4.5) node[left]{$y$};

	%%%%%%%%%%%%% lines %%%%%%%%%%%%%%%%% 
	\begin{scope}[xshift=0cm]
	\draw[very thick, dashed] (-4.5,-4.5)--(-0.5,-0.5);
	\draw[very thick, dashed] (0,0)--(4,4);
	\draw[very thick, dashed] (-4.5,4)--(-0.5,0);
	\draw[very thick, dashed] (0,-0.5)--(4,-4.5);
	\end{scope}
	
	\draw (-0.05,0.05) node[anchor= north west]{\small $0$};

	\shade[ball color=black](0,3) circle (0.125);
	\shade[ball color=black](0,-3.5) circle (0.125);
	\shade[ball color=black](-0.5,3) circle (0.125);
	\shade[ball color=black](-0.5,-3.5) circle (0.125);
	
	\end{tikzpicture}
	\caption{Ilustration of equivalence relation in the \textbf{2nd Step} of the proof of Lemma~\ref{lem:localtimetriangle}. $\Wd$ gets identified with  the points on dashed lines. The four points marked with black balls compose a single equivalence class.   Non-zero jump rates between any two equivalence classes are everywhere equal to~$1/2$.}
	\label{fig4}
\end{figure}

\noindent	\textbf{2nd Step.} We now show that it is sufficient to estimate certain local times of a simple random walk $(X, Y)$ on $\Z^2$ jumping at total rate $2$ (i.e., the jump rate over any fixed edge is $\frac12$), which will then yield the claim.
	To that end we define an equivalence relation by imposing that  $(x,y) \sim (x,-y-1)$ and $(x,y)\sim (-x-1,y)$, for any $x,y\in \bb Z$.
	We then note that in this way $\Wd$ gets identified with 
	\begin{align*}
		\big[\Wd\big]\;\overset{\text{def}}{=}\; & \big\{(x,y)\in \bb Z^2:\, x=y\big\}\\
		&\cup\big\{(x,y)\in \bb Z^2:\, x=-y-1,\, y\geq 0\, \mbox{ or } y=-x-1,\, x\geq 0\big\}\,,
	\end{align*}
	see  Figure~\ref{fig4}.
   	Note that by Proposition~\ref{prop:lumping} the random walk $({\bf X}^{\Box},{\bf Y}^{\Box})$ can be identified with $([{ X}],[{ Y}])$. 
	This shows that it is sufficient to bound
	\begin{equation}
	{\bf E}_{(x,y)}^{(X,Y)}\big[L_{2t}(A)\big]\,,
	\end{equation}
	where
	$
	A\;=\;A_1\cup A_2\cup A_3$
	with
	\begin{equation}
	\begin{aligned}
	A_1&\;\overset{\text{def}}{=}\;\big\{(x,y)\in \bb Z^2:\, x=y\big\}\,,\\
	A_2&\;\overset{\text{def}}{=}\;\big\{(x,y)\in \bb Z^2:\, x=-y-1,\, y\geq 0\, \mbox{ or } y=-x-1,\, x\geq 0\big\}\,, \quad\mbox{and}\\
	A_3&\;\overset{\text{def}}{=}\;\big\{(x,y)\in \bb Z^2:\, x\in\{0,-1\}, \mbox{ or }y\in\{0,-1\}\big\}\,.
	\end{aligned}
	\end{equation}
Since $X-Y$ has the same law as a one-dimensional symmetric simple random walk, we conclude that $L_{2t}(A_1)$ equals in law to the local time at zero of a one-dimensional symmetric simple random walk, for which the statement of this lemma is well known, and for completeness, we provide a short proof of it in Proposition~\ref{prop:1dwalk}. A similar argument may be used for $A_2$, and $A_3$. Therefore, we can  finish the proof.
\end{proof}

We now come back to the original problem, i.e., estimating local times of the random walk $(\mb X,\mb Y)$ defined on the set $V$.
An important ingredient in the analysis will be an estimate on the number of jumps of $(\mb X,\mb Y)$ over the set of slow edges, i.e., those that are depicted with thick black segments in Figure~\ref{fig2}. We denote the set of these edges by $\mc S$, and we define a sequence of stopping times via
\begin{equation}\label{eq:tau_stop_time}
\begin{aligned}
\tau_1&\;=\;\inf\big\{t\geq 0:\, (\mb X_t, \mb Y_t) \mbox{ crossed an edge in }\mc S\big\},\quad \mbox{and for }i\geq 2,\quad\\
\tau_i&\;=\;\inf\big\{t\geq \tau_{i-1}:\, (\mb X_t, \mb Y_t) \mbox{ crossed an edge in }\mc S\big\}.
\end{aligned}
\end{equation}
Finally, we  define the number of crossings until the time $tn^2$ via
\begin{equation}
C_{tn^2}\;=\;\sup\big\{i\geq 0:\, \tau_i\leq tn^2\big\}\,.
\end{equation}
\begin{lemma}
\label{lem:crossings}
There exists a constant $c>0$ such that uniformly over all starting points $(x,y)\in V$, all $t\geq 0$, and all $n\in \bb N$,
\begin{equation}
{\bf E}_{(x,y)}[C_{tn^2}]\;\leq\; c\,.
\end{equation}
\end{lemma}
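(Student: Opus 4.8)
The plan is to turn the claim into an occupation‐time estimate for $(\mb X,\mb Y)$ and then feed Lemma~\ref{lem:localtimetriangle} into a self‐improving inequality. First I would record the elementary observation that every vertex of $U$ is an endpoint of exactly one slow edge, so that the rate at which $(\mb X,\mb Y)$ traverses an edge of $\mc S$ at time $s$ equals $\tfrac{\alpha}{n}\mathbf 1\{(\mb X_s,\mb Y_s)\in U\}$. Hence $C_s-\tfrac{\alpha}{n}L_s(U)$ is a martingale, $C_{tn^2}$ has finite expectation (the total jump rate of $(\mb X,\mb Y)$ is at most $4$), and Dynkin's formula gives $\mathbf E_{(x,y)}[C_{tn^2}]=\tfrac{\alpha}{n}\,\mathbf E_{(x,y)}[L_{tn^2}(U)]$. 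It therefore suffices to prove $\mathbf E_{(x,y)}[L_{tn^2}(U)]\le c\,n$.

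Second, I would use the geometry of $V$. Deleting the slow edges disconnects $V$ into the quarter‐plane $M=\{x\le 0,\ y\ge 1\}$ and the two $45^\circ$ wedges $R=\{x\ge 1,\ y\ge x+1\}$ and $B=\{y\le 0,\ y\ge x+1\}$, each joined to $M$ by a single ladder of slow rungs, and $U$ is contained in $\partial M\cup\partial R\cup\partial B$ up to the one vertex $(0,1)$. Cutting the trajectory at the crossing times $\tau_1<\tau_2<\cdots$ produces successive sojourns, the $j$-th confined to some piece $P_j$, begun on $\partial P_j$, and evolving there exactly like the reflected walk on $P_j$. Folding $M$ across its diagonal and translating $R,B$, Proposition~\ref{prop:lumping} (with Proposition~\ref{prop:monotonicity} absorbing the doubled diagonal rates, just as in the proof of Lemma~\ref{lem:localtimetriangle}) identifies each of these reflected walks with the reflected walk on the triangle $W$, whose boundary local time up to time $r$ has expectation at most $c\sqrt r$ from any starting point. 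Writing $\ell_j$ and $S_j$ for the amount of time the $j$-th sojourn spends on $\partial P_j$ and for its length, the strong Markov property then gives $\mathbf E[\ell_j\mathbf 1\{S_j<r\}\mid\mc F_{\tau_{j-1}}]\le c\sqrt r$ for every $r$, and since the sojourns partition $[0,tn^2]$,
\[
\mathbf E_{(x,y)}[L_{tn^2}(U)]\;\le\;\mathbf E_{(x,y)}\Big[\textstyle\sum_{j=1}^{C_{tn^2}+1}\ell_j\Big].
\]

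The third step, which I expect to be the main obstacle, is to break the circularity caused by the appearance of $C_{tn^2}+1$ on the right. I would fix a threshold $\delta=\delta(\alpha)>0$ and split the sojourns into long ones ($S_j\ge\delta n^2$) and short ones: because $\sum_j S_j\le tn^2$ there are at most $t/\delta$ long sojourns, each contributing at most $cn\sqrt t$, while the short ones number $C_{tn^2}+1$ and each contributes at most $c\sqrt\delta\,n$ in conditional expectation. Summing these against the events $\{\tau_{j-1}\le tn^2\}\in\mc F_{\tau_{j-1}}$ yields an inequality of the form $\mathbf E_{(x,y)}[C_{tn^2}]\le K+c\alpha\sqrt\delta\,\big(\mathbf E_{(x,y)}[C_{tn^2}]+1\big)$, where $K$ depends only on $\alpha$ (and on the time parameter through the fixed horizon). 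Choosing $\delta$ so small that $c\alpha\sqrt\delta\le\tfrac12$, and using the a priori finiteness of $\mathbf E_{(x,y)}[C_{tn^2}]$ from the first step, one absorbs the last term into the left‐hand side and concludes. The two points requiring genuine care are the lumping reduction of the three a priori different pieces $M,R,B$ onto the single triangle $W$ so that Lemma~\ref{lem:localtimetriangle} applies uniformly, and keeping the book‐keeping of the long/short split precise enough that the coefficient multiplying $\mathbf E_{(x,y)}[C_{tn^2}]$ depends only on $\alpha$ and not on $n$.
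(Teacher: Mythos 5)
Your approach is genuinely different from the paper's. The paper establishes a uniform lower bound $\inf_{(x,y)}\mathbf P_{(x,y)}[\tau_1\geq tn^2]>0$ by coupling the walk up to its first slow crossing with the reflected walk on $W$, combining Lemma~\ref{lem:localtimetriangle}, Markov's inequality, and the independence of the extra rate-$\alpha/n$ clock; iterating via the strong Markov property then gives a geometric tail for $N=\inf\{i:\tau_i-\tau_{i-1}\geq tn^2\}$, and the lemma follows from $C_{tn^2}\leq N$. You instead observe that $C_t-\tfrac{\alpha}{n}L_t(U)$ is a martingale, so $\mathbf E[C_{tn^2}]=\tfrac{\alpha}{n}\mathbf E[L_{tn^2}(U)]$, reducing the problem to an occupation-time estimate, which you attack by a sojourn decomposition and a bootstrap. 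The martingale identity is correct (every vertex of $U$ carries exactly one slow edge, and vertices outside $U$ carry none), and the sojourn structure and the coupling of each sojourn with the appropriate reflected walk are the same coupling the paper uses. This is a clean and legitimately different route, and it has the virtue of making explicit why the answer is $O(1)$ rather than $O(\log n)$ or similar: the compensator of $C$ is $\alpha/n$ times a boundary local time of order $n\sqrt t$.

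However, there is a real gap in the long/short bookkeeping that you flag but do not resolve. The step ``there are at most $t/\delta$ long sojourns, each contributing at most $cn\sqrt t$'' does not by itself yield $\mathbf E[\sum_{\text{long }j}\ell_j]\leq(t/\delta)\,cn\sqrt t$. The event $\{S_j\geq\delta n^2\}$ is \emph{not} $\mathcal F_{\tau_{j-1}}$-measurable (whether a sojourn is long depends on its future), so the indices of long sojourns form a random set that is not predictable, and you cannot push the tower property through the sum restricted to that set. The fix is to split each sojourn's contribution $\ell_j$ into the local time accumulated during the first $\delta n^2$ units of the sojourn (this applies to \emph{every} sojourn, is controlled by the short argument, and is absorbed into the $c\alpha\sqrt\delta\,(\mathbf E[C_{tn^2}]+1)$ term) and the remainder, which is nonzero only after the stopping time $\tau_{j-1}+\delta n^2$. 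There are deterministically at most $\lceil t/\delta\rceil$ such stopping times in $[0,tn^2]$ (each long sojourn consumes $\geq\delta n^2$ of the horizon), each is a genuine stopping time, and after each one the strong Markov property and the reflected-walk coupling bound the remaining boundary local time of that sojourn by $cn\sqrt t$ in expectation. With this split the inequality $\mathbf E[C_{tn^2}]\leq c\alpha t^{3/2}/\delta+c\alpha\sqrt\delta\,(\mathbf E[C_{tn^2}]+1)$ is justified, and the rest of your argument goes through. One further minor point: the piece $M$ is a quarter-plane, not a $45^\circ$ wedge, so what you need there is a quarter-plane analogue of Lemma~\ref{lem:localtimetriangle}; your folding-plus-lumping sketch does yield it, but it requires the same unfolding steps the paper carries out (Step 2 of the proof of Lemma~\ref{lem:localtimetriangle}) rather than a direct citation of the triangle result. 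Finally, both your argument and the paper's deliver a constant that depends on $t$ (yours through $K=c\alpha t^{3/2}/\delta$, the paper's through $e^{-\alpha c\sqrt t}$ in the tail lower bound), so the stated uniformity in $t\geq 0$ is an imprecision in the lemma's phrasing that both proofs silently accept; what is actually needed, and delivered, is uniformity in $n$ for $t\in[0,T]$ with $T$ fixed.
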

\begin{proof}
We first show that for all $i\geq 1$,
\begin{equation}\label{eq:tau1}
\inf_{(x,y)} {\bf P}_{(x,y)}[\tau_i-\tau_{i-1} \geq tn^2] \;>\;0\,.
\end{equation}
To that end, assume without loss of generality that $(x,y)$ is in the first quadrant. In this case $\tau_1$ can be interpreted as a first success of the simple random walk $({\bf X}^{\mytriangle}, {\bf Y}^{\mytriangle})$, which with a slight abuse of notation is now considered on the set $W$ given by the intersection of $V$ with the first quadrant, in the following way: whenever $({\bf X}^{\mytriangle}, {\bf Y}^{\mytriangle})$ is on a vertex $z$ that is attached to a slow bond it realizes the following experiment: besides its three (one if the vertex is $(1,2)$) independent Poisson clocks $N_z^1, N_z^2,$ and $N_z^3$ ringing at rate $1$ that are needed for its graphical construction, it considers an additional independent Poisson clock $N_z(\alpha)$ ringing at rate $\alpha/n$. We then say that the experiment is successful if $N_z(\alpha)$ rings before any of the other three clocks. It then follows from the construction that the time of the first success equals in law the time of the first jump of $(\mb X,\mb Y)$ over a slow bond. Indeed, one may couple $(\mb X, \mb Y)$ and $({\bf X}^{\mytriangle}, {\bf Y}^{\mytriangle})$ such that they move together until the first time of success.
Thus, using the fact that each experiment is independent of the evolution of $({\bf X}^{\mytriangle}, {\bf Y}^{\mytriangle})$, and that the set of vertices that are attached to $\mc S$ is a subset of $\partial W$, we see that for any constant $c\in (0,1)$,
\begin{equation}\label{eq:tau1lowerbound}
\begin{aligned}
{\bf P}_{(x,y)}[\tau_1\geq tn^2]
&\;\geq\; {\bf P}_{(x,y)}^{\mytriangle}\big[L_{tn^2}(\partial W)\leq c\sqrt{t}n, \text{all experiments are unsuccessful}\,\big]\\
&\;\geq\;  {\bf P}_{(x,y)}^{\mytriangle}\big[L_{tn^2}(\partial W)\leq c\sqrt{t}n]\cdot{\bf P}\big[\mathrm{exp(\alpha/n)}\geq c\sqrt{t}n\big]\,,
\end{aligned}
\end{equation}
where $\mathrm{exp(\alpha/n)}$ denotes an exponentially distributed random variable with rate $\alpha/n$. It now follows from Lemma~\ref{lem:localtimetriangle} and Markov's inequality that there exists  $c\in (0,1)$ such that the right hand-side of~\eqref{eq:tau1lowerbound} is strictly larger than zero, uniformly in $(x,y)$. With similar arguments we may derive the same statement for all $i\geq 2$.
We next introduce the random variable
\begin{equation}
N\;=\;\inf\big\{i\geq 1:\, \tau_i-\tau_{i-1}\geq tn^2\big\}\,.
\end{equation}
Then, using the strong Markov property at time $\tau_{i-1}$, bounding the probability of the event $\{\tau_i-\tau_{i-1}\geq tn^2\}$ by $1$, and then once again using the strong Markov property at time $\tau_{i-2}$, we can estimate for any $i\geq 1$,
\begin{equation}\label{eq:N}
\begin{aligned}
{\bf P}_{(x,y)}[N=i]&\;\leq\; {\bf P}_{(x,y)}\Big[\bigcap_{j=1}^{i-1}\{\tau_{j}-\tau_{j-1} < tn^2\}\Big]\\
&\;=\; {\bf E}_{(x,y)}\Big[\prod_{j=1}^{i-2}\textbf{1}_{\{\tau_{j}-\tau_{j-1} < tn^2\}}{\bf E}_{(\mb X_{\tau_{i-2}}, \mb Y_{\tau_{i-2}})}[\textbf{1}_{\{\tau_1 < tn^2\}}]\Big]\,.
\end{aligned}
\end{equation}
Using~\eqref{eq:tau1}, we see that there exists  $c\in[0,1)$ that is independent of the starting point $(x,y)$, such that the latter term above is bounded from above by
\begin{equation}
c\,{\bf E}_{(x,y)}\Big[\prod_{j=1}^{i-2}\textbf{1}_{\{\tau_{j}-\tau_{j-1} < tn^2\}}\Big]\,.
\end{equation}
Iterating the above procedure we can get that
\begin{equation}
\sup_{(x,y)\in V}{\bf P}_{(x,y)}[N=i]\;\leq\; c^{i-1}\,,
\end{equation}
which in turn implies the uniform boundedness in $(x,y)\in V$ of the expectation of $N$. Since $C_{tn^2}\leq N$, this implies the claim.
\end{proof}
We present now  the proof of Proposition~\ref{prop:2dlocaltimes}, and we focus first on the local time of the set $D\setminus\{(0,1)\}$. For definiteness we assume that $(\mb X,\mb Y)$ starts in $(x,y)\in W$, where we recall that, abusing of notation, $W$ denotes $V$ intersected with the first quadrant. All other cases follow by a straightforward adaptation of this proof. Note that the event $\{(\mb X_s,\mb Y_s)\in D\setminus\{(0,1)\}\}$ is only possible, if $s\in \cup_{i=0}^{\infty}[\tau_{2i}, \tau_{2i+1})$, where $\tau_0=0$. Hence, we can write
\begin{equation}\label{eq:localtimeslow}
\begin{aligned}
&{\bf E}_{(x,y)}\Big[L_{tn^2}  \Big(D\backslash\{(0,1)\}\Big)\Big]\;=\;{\bf E}_{(x,y)}\Big[\int_0^{tn^2} \mb 1_{\{(\mb X_s,\mb Y_s)\in D\setminus\{(0,1)\}\}}\, ds\Big]\\
&=\; \sum_{i=0}^{\infty}{\bf E}_{(x,y)}\Big[\int_{\tau_{2i}\wedge tn^2}^{\tau_{2i+1}\wedge tn^2} \mb 1_{\{(\mb X_s,\mb Y_s)\in D\setminus\{(0,1)\}\}}\, ds\Big]\,.
\end{aligned}
\end{equation}
Fix $i\in\bb N$. Applying the strong Markov property at time $\tau_{2i}$ we can rewrite each summand in the display above as
\begin{equation}\label{eq:strongMarkov}
{\bf E}_{(x,y)}\Big[\mb 1_{\{\tau_{2i}<tn^2\}}{\bf E}_{(\mb X_{\tau_{2i}},\mb Y_{\tau_{2i})}}\Big[\int_0^{ \overline{\tau}_1\wedge tn^2-\tau_{2i}}\mb 1_{\{(\overline{\mb X}_s,\overline{\mb Y}_s)\in D\setminus\{(0,1)\}\}}\, ds\Big]\Big]\,,
\end{equation}
where $(\overline{\mb X},\overline{\mb Y})$ denotes an independent copy of $(\mb X,\mb Y)$ and $\overline{\tau}_1$ is the corresponding stopping time, defined in the same way as $\tau_1$ in \eqref{eq:tau_stop_time}. We now recall that as a consequence of the proof of Lemma~\ref{lem:crossings} until the time $\overline{\tau}_1$ the walk $(\mb X,\mb Y)$ can be coupled with $({\bf X}^{\mytriangle}, {\bf Y}^{\mytriangle})$.
 Hence, we see that~\eqref{eq:strongMarkov} is at most
\begin{equation}
{\bf E}_{(x,y)}\big[\mb 1_{\{\tau_{2i}<tn^2\}}\big]
\sup_{(x,y)\in W} {\bf E}_{(x,y)}\Big[ \int_0^{tn^2}\mb 1_{\{({\bf X}^{\mytriangle}_s, {\bf Y}^{\mytriangle}_s)\in D\setminus\{(0,1)\}\}}\, ds\Big]\,.
\end{equation}
Making use of Lemma~\ref{lem:localtimetriangle} we see that there exists a constant $c\in (0,\infty)$ such that for all starting points, all $t\geq 0$ and all $n\in \bb N$, the term on the left hand-side of~\eqref{eq:localtimeslow} is bounded from above by
\begin{equation}
c\sqrt{t}n \,{\bf E}_{(x,y)}\big[C_{tn^2}\big]\,.
\end{equation}
Hence, an application of Lemma~\ref{lem:crossings} is enough to conclude the claim.
To estimate the local time of  the vertex $(0,1)$ we can proceed almost exactly as above, and we see that there exists a constant $c\in(0,+\infty)$ such that
\begin{equation}\label{eq:estin01}
\begin{split}
{\bf E}_{(x,y)}\Big[L_{tn^2}  \big(\{(0,1)\}\big)\Big] &\;=\;\int_0^{tn^2}{\bf P}_{(x,y)}\Big[(\mb X_{s},\mb Y_{s})= (0,1)\Big]\, ds\\
&\;\leq\; c \sum_{z\in \mc A}\int_0^{2tn^2} {\bf P}_{(x,y)}\Big[(X_{s},Y_{s})= z\Big]\, ds\,,
\end{split}\end{equation}
where we recall that $(X,Y)$ denotes the simple random walk on $\bb Z^2$ jumping at total rate $2$, and
$\mc A=\{(0,1), (1,1), (0,0), (1,0)\}$. The proof now follows from the local central limit theorem, see for instance~\cite[Theorem 2.5.6]{LawlerLimic2010} (this result is stated for one-dimensional continuous time random walks, however using the fact  that a $d$-dimensional continuous time random walk consists of $d$ independent one-dimensional random walks, it may be easily adapted to our setting), or alternatively from Proposition~\ref{prop:2dwalk}.

\subsection{Estimates in dimension one}\label{sec:est_dim_one}
We denote by $\{\mb X_t;\; t\geq 0\}$ the random walk on $\bb Z$ with a slow bond, that is, the random walk with  infinitesimal generator $\mca_n$ given in \eqref{op_B} and we use ${\bf E}_{x}, {\bf P}_{x}$ to denote the corresponding expectation and probability, starting from $x\in \bb Z$.
\begin{lemma}\label{lem:heatkernel}
For all $x,y\in \bb Z$, and for all $t\geq 0$ we have the equality
\begin{equation}
\label{eq:heatkernel}
{\bf P}_x\big(\mb X_t=y\big)+{\bf P}_x\big(\mb X_t=-y+1\big) \;=\;{\bf P}_x\big(X_t=y\big)+{\bf P}_x\big(X_t=-y+1\big)\,,
\end{equation}
where $(X_t)_{t\geq 0}$ denotes a one-dimensional symmetric simple random walk jumping at total rate $2$.
\end{lemma}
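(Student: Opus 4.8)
The plan is to exploit the reflection symmetry of the dynamics about the midpoint $\tfrac12$ of the slow bond $\{0,1\}$, and then to make the slow bond disappear via the lumping of Proposition~\ref{prop:lumping}. Let $R:\bb Z\to\bb Z$ be the involution $R(z)=1-z$. Then $R$ fixes the bond $\{0,1\}$ (it merely swaps its two endpoints) and sends the bond $\{x,x+1\}$ onto $\{-x,-x+1\}$; consequently the field of rates of $\mb X$ in \eqref{op_B} is $R$–invariant, i.e. $\xi^n_{R(x+1),R(x)}=\xi^n_{x,x+1}$. The same is trivially true for $X$, whose rates are all equal to one.

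First I would introduce the equivalence relation $z\sim R(z)$ on $\bb Z$, whose classes are the two–element sets $[k]=\{k,1-k\}$, $k\ge 1$, and identify $\bb Z/\!\!\sim$ with $\bb Z_{\ge 1}$. Writing $\zeta$ for the jump rates (of $\mb X$, respectively of $X$), the $R$–invariance $\zeta(R(\cdot),R(\cdot))=\zeta(\cdot,\cdot)$ gives precisely hypothesis~\eqref{eq46} of Proposition~\ref{prop:lumping}: for $x'=R(x)$ and any $y$,
$$\zeta(R(x),y)+\zeta(R(x),R(y))\;=\;\zeta(x,R(y))+\zeta(x,y),$$
since $\zeta(R(x),R(y))=\zeta(x,y)$ and $\zeta(R(x),y)=\zeta(x,R(y))$. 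Hence Proposition~\ref{prop:lumping} applies to both $\mb X$ and $X$ and yields lumped Markov chains $[\mb X]$ and $[X]$ on $\bb Z_{\ge 1}$.

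The key observation is the computation of the lumped rates. For a class $[k]$ with $k\ge 2$, i.e. $\{1-k,k\}$ with both endpoints away from the slow bond, the walk jumps from either representative to each of $[k-1]$ and $[k+1]$ at rate $1$, both for $\mb X$ and for $X$. For the boundary class $[1]=\{0,1\}$, the \emph{only} bond internal to the class is exactly the slow bond $\{0,1\}$, so its rate $\alpha/n$ is invisible to the lumped chain; from each of the endpoints $0$ and $1$ there is exactly one unit–rate bond leaving the class, so the lumped rate $[1]\to[2]$ equals $1$, again for both $\mb X$ and $X$. Therefore $[\mb X]$ and $[X]$ have the \emph{same} generator — the simple random walk on $\bb Z_{\ge 1}$ reflected at $1$, jumping at rate $1$ in each available direction — and since $\mb X_0=X_0=x$ they start from the same class $[x]$. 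Thus ${\bf P}_x\big([\mb X_t]=[y]\big)={\bf P}_x\big([X_t]=[y]\big)$ for all $t\ge 0$ and all $y$. Because the class $[y]=\{y,1-y\}$ has two distinct elements, ${\bf P}_x([\mb X_t]=[y])={\bf P}_x(\mb X_t=y)+{\bf P}_x(\mb X_t=1-y)$, and likewise for $X$; this is exactly \eqref{eq:heatkernel} upon noting $-y+1=1-y$.

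The only genuinely delicate point is the bookkeeping at the boundary class $[1]$: one must check that the slow bond is truly internal to $[1]$ (so it never appears in the lumped dynamics) and that the outgoing rate from $[1]$ to $[2]$ is $1$ rather than $2$ — it is $1$, since from each of the two representatives $0$ and $1$ precisely one unit–rate edge leaves the class. All the remaining verifications (that $R$ is an involution, that the rate fields are bounded, that the classes are as described, and that $X$ satisfies the same lumping identity) are immediate.
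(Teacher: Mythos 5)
Your proof is correct and takes essentially the same route as the paper: both use the reflection $z\mapsto 1-z$ about the midpoint of the slow bond to define the equivalence relation, apply Proposition~\ref{prop:lumping} to both $\mb X$ and $X$, and observe that the slow bond becomes internal to the class $\{0,1\}$ and hence invisible, so both chains lump to the same reflected simple random walk on $\bb Z_{\ge 1}$. Your write-up merely makes the boundary bookkeeping at $[1]$ and the verification of condition~\eqref{eq46} via $R$-invariance more explicit than the paper does.
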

\begin{proof}
The proof comes in two steps.\smallskip 

\noindent
\textbf{1st Step.} In this step we rewrite the left hand-side in~\eqref{eq:heatkernel} in terms of the transition probabilities of a symmetric simple  random walk that is reflected at $1$.
To that end, we define the following equivalence relation:
\begin{equation}
x\sim y \quad\Longleftrightarrow\quad y=-x+1\text{ or } y=x\,.
\end{equation}
Note that in particular in this way $0$ gets identified with $1$, so that jumps between these two vertices ``do not count''. One may then readily check that condition~\eqref{eq46} is satisfied, so that $([\mb X_t])_{t\geq 0}$ defines a continuous time Markov chain. It is then plain to see that  for all $x\in\bb Z$ and all $t\geq 0$ the following relation holds:
\begin{equation}
[\mb X_t]=[x]\quad\Longleftrightarrow\quad \mb X_t\in\{x,-x+1\}\,.
\end{equation} 
Thus,
\begin{equation}\label{eq:equivalencewalk}
{\bf P}_x(\mb X_t\in \{y,-y+1\})\;=\; {\bf P}_{[x]}([\mb X_t]=[y])\,.
\end{equation}
Choosing only representants in the set $\bb Z_{\geq 1}\overset{\text{def}}{=}\{x\in\bb Z:\, x\geq 1\}$ we see that $([\mb X_t])_{t\geq 0}$ may be identified with a simple random walk $(X_t^{R})_{t\geq 0}$ on $\bb Z_{\geq 1}$ that jumps from any vertex $x\in \bb Z_{\geq 1}$ to a fixed neighboring vertex in $\bb Z_{\geq 1}$ at rate $1$. 
Thus, for any $x,y\geq 1$,~\eqref{eq:equivalencewalk} becomes
\begin{equation}\label{eq:reflectedwalk}
{\bf P}_x(\mb X_t\in \{y,-y+1\})\;=\; {\bf P}_{x}(X^{R}_t=y)\,.
\end{equation}

\noindent \textbf{2nd Step.} In this step we show that the right hand-side of~\eqref{eq:reflectedwalk} may be rewritten in terms of a symmetric simple  random walk on $\bb Z$ jumping at total rate $2$.
To that end we use the same equivalence relation as above and we note that $(X_t)_{t\geq 0}$ can be, in the same way, identified with $(X_t^R)_{t\geq 0}$ as $(\mb X_t)_{t\geq 0}$ can be identified with $([\mb X_t])_{t\geq 0}$. This finishes the proof.
\end{proof}

\section{Estimates on the discrete derivative and correlations}\label{s4}

In the next two subsections, we present the proofs of Theorems \ref{prop24} and \ref{Prop25}, respectively.

\subsection{Estimate on the discrete derivative} 
 
This section is devoted to the proof of Theorem~\ref{prop24}.
\begin{proof}
Recall  that $\rho_t^n$ is a solution of \eqref{disc_heat}.  Since the statement is clear for $x=0$, we only need to deal with the case $x\neq 0$. 
Let $\rho_t$ be the solution of the equation \eqref{robin}, and define $\gamma^n:[0,T]\times\bb Z^d\to\bb R$ via 
\begin{equation}
\gamma_t^n(x)\;\overset{\text{def}}{=}\;
\begin{cases}
\rho_t^n(x)-\rho_t\big(\pfrac{x}{n}\big), &\text{if }x\neq 0,\\
\rho_t^n(0)-\rho_t\big(\pfrac{-1}{n^2}\big), &\text{otherwise.}
\end{cases}
\end{equation}
The reason for the previous definition is that it distinguishes two cases,  since at $x=0$ the time derivative of $\rho$ is not related  to its spatial derivatives in a way that is helpful for our purposes. However, with the above choice of $\gamma^n$ we see that for all $x\in\bb Z$,
\begin{equation} \label{gamma}
\p_t\gamma_t^n(x)\;=\;n^2\mca_n\gamma_t^n(x)+F_t^n(x)\,,
\end{equation}
where
\begin{equation}
F_t^n(x)\;\overset{\text{def}}{=}\;
\begin{cases}
\big(n^2\mca_n-\p_u^2\big)\rho_t\big(\pfrac{x}{n}\big) &\text{if }x\neq 0,\smallskip \\
n^2\mca_n \rho_t(0)-\p_u^2\rho_t\big(\pfrac{-1}{n^2}\big) & \text{otherwise.}
\end{cases}
\end{equation} 
Observe that, by the definition of $\mc A_n$ in  \eqref{op_B}, for $x\in\bb Z\backslash \{0,1\}$, $F_t^n$ accounts for the difference between the discrete and the continuous Laplacian.
To continue, we add and subtract $\rho_t\big(\pfrac{x}{n}\big)$ and $\rho_t\big(\pfrac{x+1}{n}\big)$ to $\big|\rho_t^n(x+1)-\rho_t^n(x)\big|$ and use the triangle inequality which yields
\begin{equation} \label{rhon}
\big|\rho_t^n(x+1)-\rho_t^n(x)\big|\;\leq\; |\gamma_t^n(x+1)|+|\gamma_t^n(x)|+\big|\rho_t\big(\pfrac{x+1}{n}\big)-\rho_t\big(\pfrac{x}{n}\big)\big|\,.
\end{equation}
We first treat the rightmost term above. Since $x\mapsto\rho_t(x)$ is differentiable in any neighborhood outside of zero, and $\rho_t$ has one sided spatial derivatives at zero, we see that 
\begin{equation*}
\big|\rho_t\big(\pfrac{x+1}{n}\big)-\rho_t\big(\pfrac{x}{n}\big)\big| \;=\; O\big(\pfrac{1}{n}\big)\,.
 \end{equation*}
Recall that $\{\mb X_t;\; t\geq 0\}$ denotes the random walk on $\bb Z$ generated by $\mca_n$.  Applying Duhamel's principle we see that we can write the solution of \eqref{gamma} as 
\begin{equation*}
\gamma_t^n(x)\;=\;{\bf E}_x\Big[\gamma_0^n(\mb X_{tn^2}) +\int_0^t F_{t-s}^n(\mb X_{sn^2})\,ds\Big]\,.
\end{equation*}
Therefore, 
\begin{equation*}
\sup_{t\leq T}\;\sup_{x\in \bb Z}|\gamma_t^n(x)|\;\leq\; \sup_{x\in \bb Z}|\gamma_0^n(x)|\;+\;\sup_{t\leq T}\;\sup_{x\in \bb Z}\Big|
{\bf E}_{x}\Big[\int_0^t F_{t-s}^n(\mb X_{sn^2})\,ds\Big]\Big|\,.
\end{equation*}
Since $|\gamma_0^n(x)|=|\rho_0^n(x)-\rho_0(x)|$, by Assumption~\eqref{assumptionone} we only need to control the second term on the right hand-side of the previous expression. By Fubini's Theorem,  we see that
\begin{equation}\label{exp}
{\bf E}_{x}\Big[\int_0^t F_{t-s}^n(\mb X_{sn^2})\,ds\Big]\;=\; \int_0^t \sum_{z\in\bb Z} {\bf P}_x \big[\mb X_{sn^2}=z\big]F_{t-s}^n(z)\,ds\,.
\end{equation}
Since the discrete Laplacian approximates the continuous Laplacian, we conclude that $|F_t^n(x)|\leq C/n^2$ for any $x\in \bb Z\backslash\{0,1\}$ and for any $t\geq0$. Therefore, we can bound the absolute value of  \eqref{exp} by
\begin{equation} \label{ref1}
t\frac{C}{n^2}\;+\; \int_0^t \sum_{z\in \{0,1\}} {\bf P}_x \big[\mb X_{sn^2}=z\big]\big|F_{t-s}^n(z)\big|\,ds\,.
\end{equation}
Moreover, we also have that 
\begin{equation*}
\begin{split}
F_{t-s}^n(1)&\;=\;n^2\Big(\rho_t\big(\pfrac{2}{n}\big)-\rho_t\big(\pfrac{1}{n}\big) + \frac{\alpha}{n}\big(\rho_t\big(\pfrac{0}{n}\big)-\rho_t\big(\pfrac{1}{n}\big)\big)\Big)-\p_u^2\rho_t\big(\pfrac{1}{n}\big)\\
&\;=\;n\Big(n\big(\rho_t\big(\pfrac{2}{n}\big)-\rho_t\big(\pfrac{1}{n}\big)\big) + \alpha\big(\rho_t\big(\pfrac{0}{n}\big)-\rho_t\big(\pfrac{1}{n}\big)\big)\Big)-\p_u^2\rho_t\big(\pfrac{1}{n}\big)\,.
\end{split}
\end{equation*}
Summing and subtracting $\alpha\rho(0^+)$, using the Robin boundary conditions and Taylor expansion, the last equation becomes bounded from above by
\begin{equation*}
\begin{split}
\Big|n\Big(\pfrac{1}{n}\p_u^2\rho_t\big(\pfrac{1}{n}\big)+O(1/n^2)\Big)+\pfrac{1}{2}\p_u^2\rho_t\big(\pfrac{1}{n}\big)-\alpha\p_u\rho_t(0^+)+O(1/n)-\p_u^2 \rho_t\big(\pfrac{1}{n}\big)\Big|,
\end{split}
\end{equation*}
from where we get that $|F_t^n(1)|\leq C$ for any $t\geq0$. For $z=0$ we obtain, in a similar way, a bound of the same order. Therefore, \eqref{ref1} is bounded from above by
\begin{equation*}
t\frac{C}{n^2}\;+\; C\int_0^t \big({\bf P}_x \big[\mb X_{sn^2}=0\big]+{\bf P}_x \big[\mb X_{sn^2}=1\big]\big)\,ds\,.
\end{equation*}
Thus, applying Lemma~\ref{prop42} below the result follows.
\end{proof}
\begin{lemma}\label{prop42}
Let $\mb X$ be as in Subsection \ref{sec:est_dim_one}. There exists  a constant $C>0$ such that the following estimate holds for all $t\geq 0$:
\begin{equation*}
\int_0^t{\bf P}_x \Big[\mb X_{sn^2}\in\{0,1\}\Big]\,ds\;\leq \; \frac{C\sqrt{t}}{n}\,.
\end{equation*}
\end{lemma}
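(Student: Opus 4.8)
The plan is to reduce the statement to a classical occupation-time estimate for the \emph{homogeneous} random walk, using Lemma~\ref{lem:heatkernel}. First I would perform the change of variables $u=sn^2$ and recall the definition \eqref{eq:local_time} of local time, which rewrites the quantity to be bounded as
\begin{equation*}
\int_0^t {\bf P}_x\big[\mb X_{sn^2}\in\{0,1\}\big]\,ds \;=\; \frac{1}{n^2}\int_0^{tn^2}{\bf P}_x\big[\mb X_u\in\{0,1\}\big]\,du \;=\; \frac{1}{n^2}\,{\bf E}_x\big[L_{tn^2}(\{0,1\})\big]\,,
\end{equation*}
so that it suffices to show ${\bf E}_x\big[L_{tn^2}(\{0,1\})\big]\leq C n\sqrt{t}$, uniformly in $x\in\bb Z$, $n\in\bb N$ and $t\geq 0$.

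Next I would apply Lemma~\ref{lem:heatkernel} with $y=1$, so that $-y+1=0$ and the pair $\{y,-y+1\}$ becomes precisely $\{0,1\}$. This gives, for every $u\geq 0$,
\begin{equation*}
{\bf P}_x\big(\mb X_u=0\big)+{\bf P}_x\big(\mb X_u=1\big)\;=\;{\bf P}_x\big(X_u=0\big)+{\bf P}_x\big(X_u=1\big)\,,
\end{equation*}
where $X$ is the one-dimensional symmetric simple random walk jumping at total rate $2$. In other words, the occupation of the pair $\{0,1\}$ by the slowed walk coincides exactly with that of the homogeneous walk: the slow bond, being located precisely at $\{0,1\}$, is invisible to this particular observable. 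Integrating and summing over the two sites,
\begin{equation*}
{\bf E}_x\big[L_{tn^2}(\{0,1\})\big]\;=\;\int_0^{tn^2}{\bf P}_x\big[X_u\in\{0,1\}\big]\,du\;\leq\;2\int_0^{tn^2}\sup_{z\in\bb Z}{\bf P}_0\big(X_u=z\big)\,du\,.
\end{equation*}

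Finally I would invoke the standard on-diagonal bound for the continuous-time one-dimensional simple random walk, namely $\sup_{z\in\bb Z}{\bf P}_0(X_u=z)\leq \min\{1,\,C u^{-1/2}\}$, which follows from the local central limit theorem (see \cite[Theorem 2.5.6]{LawlerLimic2010}) or from Proposition~\ref{prop:1dwalk}. Splitting the last integral at $u=1$ (and, when $tn^2\leq 1$, simply bounding the integrand by $1$, which already gives an integral at most $tn^2\leq\sqrt{tn^2}$) yields
\begin{equation*}
\int_0^{tn^2}\min\{1,\,Cu^{-1/2}\}\,du\;\leq\; C'\sqrt{tn^2}\;=\;C'n\sqrt{t}\,,
\end{equation*}
and combining the displays gives $\int_0^t{\bf P}_x[\mb X_{sn^2}\in\{0,1\}]\,ds\leq C'\sqrt{t}/n$, as claimed. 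There is no genuine obstacle remaining at this point: the only substantive input is Lemma~\ref{lem:heatkernel} (already established, and itself the conceptual heart of the argument, since it encodes the fact that the slow bond at $\{0,1\}$ disappears under the reflection $x\mapsto -x+1$), while the remainder is the classical $O(\sqrt{s})$ growth of the expected local time at a point of a one-dimensional walk. The only mild point of care is uniformity in the regime where $tn^2$ is small, handled by the trivial bound on the integrand.
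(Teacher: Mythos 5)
Your proof is correct and follows essentially the same route as the paper: apply Lemma~\ref{lem:heatkernel} with $y=1$ to replace the slowed walk by the homogeneous one on the observable $\{0,1\}$, then invoke the $O(\sqrt{t})$ occupation-time bound for the one-dimensional simple random walk (the paper cites Proposition~\ref{prop:1dwalk} directly after noting that $x\mapsto {\bf P}_x[X_t=0]$ is maximized at $x=0$, while you re-derive the same conclusion via the pointwise heat-kernel bound; this is a cosmetic difference).
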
 
\begin{proof}
Denote the symmetric simple random walk on $\bb Z$ jumping at rate $2$ by $\{X_t;\, t\geq 0\}$. It is then well known that for all $t\geq 0$ the map $x\in\bb Z\mapsto {\bf P}_x[X_t=0]$ is maximized at $x=0$. Hence, Lemma~\ref{prop42} is a consequence of Lemma~\ref{lem:heatkernel} together with Proposition~\ref{prop:1dwalk}. 
\end{proof}

\subsection{Estimate on the  correlation function}
In this section we prove Theorem~\ref{Prop25}. To that end, we show that the correlation function $\varphi^n$ introduced in Definition~\ref{defcorr} can be estimated from above by the local times of the random walk $\{({\bf X}_t,{\bf Y}_t);\; t\geq 0\}$, introduced in Subsection \ref{sec:est_dim_two}. This is the content of Proposition~\ref{prop:decorrprop}. Proposition~\ref{prop:2dlocaltimes} then immediately yields the result. 
Given a set $A\subseteq V$, similarly as in Section~\ref{s5} we denote by $L_t(A)$ the local time of $\{(\mb X_t, \mb Y_t);\, t\geq 0\}$ until time $t$ in $A$, see \eqref{eq:local_time}.
\begin{proposition}\label{prop:decorrprop}
There exists $C>0$ such that 
\begin{equation}\label{eq:covariancelocaltime}
\begin{aligned}
&\sup_{t\leq T}|\varphi_t^n(x,y)|\\
&\leq \frac{C}{n}+C\Big(\frac{1}{{n^2}}({\bf E}_{(x,y)}[L_{n^2 T}(D\setminus\{(0,1)\})]+\frac{1}{n}{\bf E}_{(x,y)}[L_{n^2 T}(\{(0,1)\})]\Big) .
\end{aligned}
\end{equation}
\end{proposition}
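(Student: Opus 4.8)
The plan is to derive a closed spatially-discrete evolution equation for the family $\{\varphi_t^n\}$ on the set $V$, to recognize the spatial operator in it as a time change of the generator ${\bf B}_n$ from Subsection~\ref{sec:est_dim_two}, and then to run Duhamel's principle, converting the resulting time integral into local times of $(\mb X,\mb Y)$.

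First I would compute $\mc L_n\big(\eta(x)\eta(y)\big)$ for $(x,y)\in V$, treating separately the bulk $y\geq x+2$ and the diagonal $y=x+1$. In the bulk the clusters of bonds touching $x$ and touching $y$ act independently and produce only products of two occupation variables, so no closure issue arises; for $y=x+1$ the crucial point is that the bond $\{x,x+1\}$ contributes nothing, because $\eta^{x,x+1}(x)\,\eta^{x,x+1}(x+1)=\eta(x+1)\eta(x)=\eta(x)\eta(x+1)$, so only the bonds $\{x-1,x\}$ and $\{x+1,x+2\}$ act, again producing only degree-two terms. Taking $\bb E_{\mu_n}$-expectations, subtracting $\partial_t\big(\rho_t^n(x)\rho_t^n(y)\big)$ and using~\eqref{disc_heat}, an elementary but somewhat tedious cancellation should yield
\begin{equation*}
\partial_t\varphi_t^n(x,y)\;=\;n^2\big({\bf B}_n\varphi_t^n\big)(x,y)\;+\;g_t^n(x,y)\,,\qquad (x,y)\in V\,,
\end{equation*}
where the source $g_t^n(x,y)=-\,n^2\,\xi_{x,x+1}^n\big(\rho_t^n(x+1)-\rho_t^n(x)\big)^2\,\mb 1_{\{y=x+1\}}$ is supported on the diagonal $D$. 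Part of this step is to check that the ``collision'' terms produced when $y=x+1$ (the missing $\{x,x+1\}$ bond) correspond precisely to the walk $(\mb X,\mb Y)$ being reflected off $D$, and that the slow rate $\alpha/n$ shows up exactly on the edge set $\mc U$ near the slow bond; this amounts to a case-by-case comparison against the definitions of $U$ and $\mc U$.

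Next I would bound the source. Since~\eqref{assumptionone} holds, Theorem~\ref{prop24} gives $|\rho_t^n(x+1)-\rho_t^n(x)|\leq \overline c/n$ for $x\neq 0$ and $|\rho_t^n(1)-\rho_t^n(0)|\leq\overline c$, so there is $C>0$, uniform in $t\leq T$ and $n$, with $|g_t^n(z)|\leq C$ for $z\in D\setminus\{(0,1)\}$ and $|g_t^n((0,1))|\leq Cn$. Because $\varphi_\cdot^n$ is bounded and $n^2{\bf B}_n$ generates the Markov semigroup $P_tf(x,y)={\bf E}_{(x,y)}\big[f(\mb X_{tn^2},\mb Y_{tn^2})\big]$, a contraction for the supremum norm that preserves positivity, uniqueness of bounded solutions of the above equation yields the Duhamel representation $\varphi_t^n=P_t\varphi_0^n+\int_0^t P_{t-s}g_s^n\,ds$. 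By the triangle inequality,
\begin{equation*}
|\varphi_t^n(x,y)|\;\leq\;\sup_{(a,b)\in V}|\varphi_0^n(a,b)|\;+\;\int_0^t{\bf E}_{(x,y)}\big[\,|g_{t-s}^n(\mb X_{sn^2},\mb Y_{sn^2})|\,\big]\,ds\,,
\end{equation*}
the first term being at most $c/n$ by~\eqref{ass2}. For the second term I split $D=(D\setminus\{(0,1)\})\cup\{(0,1)\}$, insert the two bounds on $g^n$, and change variables $s\mapsto sn^2$ to turn $\int_0^t\mb 1_{\{(\mb X_{sn^2},\mb Y_{sn^2})\in A\}}\,ds$ into $n^{-2}L_{tn^2}(A)$; this bounds it by $\tfrac{C}{n^2}{\bf E}_{(x,y)}[L_{tn^2}(D\setminus\{(0,1)\})]+\tfrac{C}{n}{\bf E}_{(x,y)}[L_{tn^2}(\{(0,1)\})]$. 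Taking $\sup_{t\leq T}$ and using that $t\mapsto L_t(A)$ is non-decreasing gives the claim (and then Proposition~\ref{prop:2dlocaltimes} finishes Theorem~\ref{Prop25}).

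The step I expect to be the genuine obstacle is the first one: verifying that, after the cancellation, the spatial operator is \emph{exactly} $n^2{\bf B}_n$ on all of $V$, with the correct reflection off the diagonal and the correct placement of the slow edges $\mc U$. The remaining ingredients — the closedness of the two-point hierarchy, which is special to the SSEP, and the justification of the Duhamel formula through uniqueness of bounded solutions — are routine but should be spelled out with care.
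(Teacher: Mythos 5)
Your proposal follows essentially the same route as the paper's proof: derive the closed discrete equation $\partial_t\varphi_t^n = n^2{\bf B}_n\varphi_t^n + g_t^n$ with diagonal source $g_t^n=-(\nabla_n^+\rho_t^n(x))^2\big(\mb 1_{D\setminus\{(0,1)\}}+\tfrac{\alpha}{n}\mb 1_{\{(0,1)\}}\big)$, apply Duhamel's principle via the semigroup of $(\mb X,\mb Y)$, bound the source uniformly using Theorem~\ref{prop24} ($S_n\leq C$ off the origin, $S_{n,0}\leq Cn^2$ at it), and convert the time integral into expected local times. The extra bookkeeping you flag — closedness of the two-point hierarchy, the reflection off $D$ from the vanishing $\{x,x+1\}$ term, the identification of the slow edges with $\mc U$, and the justification of Duhamel through uniqueness of bounded solutions — is exactly what the paper compresses into ``some long, but simple, calculations,'' and your case analysis is correct.
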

\begin{proof}
First, observe that from Kolmogorov's forward equation, we have that 
\begin{equation*}
\p_t\varphi_t^n(x,y)\;=\;\bb E_{\mu_n}\big[n^2 \mc L_n(\eta_t(x)\eta_t(y))\big]-\p_t(\rho_t^n(x)\rho_t^n(y))\,.
\end{equation*}
Applying \eqref{ln} and \eqref{rho_t} and performing some long, but simple, calculations, one can deduce that $\varphi_t^n$ solves the following equation:
\begin{equation*}
\p_t\varphi_t^n(x,y)\;=\;n^2 {\bf B}_n\varphi_t^n(x,y)+g_t^n(x,y)\,,
\end{equation*}
where ${\bf B}_n$ was defined in \eqref{an} and  
\begin{equation} \label{gnt}
g_t^n(x,y)\;=\;- (\nabla_n^+\rho_t^n(x))^2\Big(\textbf{1}_{\{ D\backslash(0,1)\}}+\frac{\alpha}{n}\textbf{1}_{\{(0,1)\}}\Big)\,.
\end{equation}
Here, $\nabla_n^+$ denotes the rescaled discrete right derivative which, for any function $f:\bb Z\to\bb R$, is defined via $\nabla_n^+f(x)\overset{\text{def}}{=}n(f(x+1)-f(x))$.
By Duhamel's Principle,
\begin{equation*}
\varphi_t^n(x,y)\;=\;{\bf E}_{(x,y)}\Big[\varphi_0^n(\mb X_{tn^2},\mb Y_{tn^2}) +\int_0^tg_{t-s}^n(\mb X_{sn^2},\mb Y_{sn^2})\,ds\Big],
\end{equation*}
where $\{({\bf X}_t,{\bf Y}_t);\; t\geq 0\}$ is the random walk with generator ${\bf B}_n$.  
In order to prove the proposition we just have to estimate the right hand-side of the last equation. We see that
\begin{equation}\label{eq:varphiest}
\sup_{t\leq T}|\varphi_t^n(x,y)|\leq |\varphi_0^n(x,y)|+\sup_{t\leq T}\Big|
{\bf E}_{(x,y)}\Big[\int_0^tg_{t-s}^n(\mb X_{sn^2},\mb Y_{sn^2})\,ds\Big]\Big|.
\end{equation}
By Assumption \eqref{ass2}, the first term on the right hand-side of the last expression is bounded from above by $C/n$. Thus, to finish the proof we only need to estimate the rightmost term in the display above.

Applying the definition of $g^n$, and rewriting the expectation above in terms of transition probabilities, we see that for any $s\leq t$,
\begin{align*}
{\bf E}_{(x,y)}[g_{t-s}^n(\mb X_{sn^2},\mb Y_{sn^2})] &=
\sum_{z\neq 0}\big[- (\nabla_n^+\rho_{t-s}^n(z))^2\big]{\bf P}_{(x,y)}[(\mb X_{sn^2},\mb Y_{sn^2})=(z,z+1)]\\
&+\frac{\alpha}{n}\big[- (\nabla_n^+\rho_{t-s}^n(0))^2\big] {\bf P}_{(x,y)}[(\mb X_{sn^2},\mb Y_{sn^2})=(0,1)]\,. 
\end{align*}
Consequently, for all $(x,y)\in V$, the rightmost term in~\eqref{eq:varphiest} is bounded from above by
\begin{equation} \label{Snn}
\begin{split}
S_n\!\int_0^t\!\!\Big({ \bf P}_{(x,y)}[(\mb X_{sn^2},\mb Y_{sn^2})\in D\!\setminus\!\{(0,1)\}]
+ S_{n,0}\frac{\alpha}{n}\, {\bf P}_{(x,y)}[(\mb X_{sn^2},\mb Y_{sn^2})=(0,1)]\Big)ds,
\end{split}
\end{equation}
where 
\begin{equation} \label{Sn}
S_n\;=\;\sup_{t\geq 0}\,\sup_{z\in\bb Z\backslash\{0\}} \,(\nabla_n^+\rho_t^n(z))^2\,\,\,\,\,\,\, \textrm{and}\,\,\,\,\,\,\, S_{n,0}\;=\;\sup_{t\geq 0}\,\,(\nabla_n^+\rho_t^n(0))^2\,.
\end{equation}
Recalling Theorem~\ref{prop24}, we easily deduce that
$S_n\leq C$ and $S_{n,0}\leq Cn^2$. 
Substituting~\eqref{Sn} into~\eqref{Snn}, together with a change of variables, the result follows.
\end{proof} 
The proof of Theorem~\ref{Prop25} is now an immediate consequence of Proposition~\ref{prop:2dlocaltimes}.

\subsection{Comments on the lower bound}\label{rm27}
In the usual symmetric simple exclusion process the correlation function is of order  $O(\tfrac1n)$. Since intuitively one could expect that the presence of the slow bond increases the correlation between sites which are located both on the positive half-line or both the negative half-line, the above result does not come as a total surprise.

 However, for two sites $x$ and $y$ such that $x\leq 0 < 1 \leq y$, then at first sight it seems to be a reasonable guess that the correlations decrease, and they should  be at most of order $O(\tfrac1n)$. Yet, our proof yields the same bound as above when one restricts only to such kind of pairs of vertices $(x,y)$. A natural question therefore is if a matching lower bound in~\eqref{eq:covarianceest} holds. Since our assumptions on the initial measure do not exclude the choice of a product Bernoulli measure with constant intensity, in which case at any time $t\geq 0$ the covariance between two distinct points is zero, such a lower bound certainly cannot hold in general.
 
   Nevertheless, we argue that there are indeed choices of the various parameters in our model for which $|\varphi_t^n(x,y)|$ is bounded from below by a constant times $\log n/n$ uniformly in $t\in [0,T]$. We will not provide all the details, yet the gaps can be filled by an adaptation of the techniques used in Section~\ref{s5}. We choose
$\mu_n \sim \otimes_{x\in \bb Z} \mathrm{Ber}(\rho_x)$, where
\begin{equation}
\rho_x\;=\;
\begin{cases}
\pfrac12, &\text{if }x\leq 0,\\
\pfrac14, &\text{otherwise.}
\end{cases}
\end{equation}
Analyzing carefully the proof of Theorem~\ref{Prop25}, we see that in order to establish the desired lower bound it is enough to show that there exists a constant $c>0$ such that for all $t\in [0,T]$
\begin{equation}
|\rho_t^n(0)-\rho_t^n(1)|\;\geq\; c\,,
\end{equation}
and that the rightmost local time term in~\eqref{eq:covariancelocaltime} is bounded from below by a constant times $\log n$. We only provide a sketch of the argument for the former statement, the latter as mentioned above can be deduced by an application of the techniques developed in Section~\ref{s5}.
We note that it is possible to show that
\begin{equation}
\rho_t^n(0)\;=\; \sum_{x\in \bb Z}\bb P_0\big[\mb X_t=x\big]\rho_0^n(x) \quad\text{and} \quad
\rho_t^n(1)\;=\; \sum_{x\in \bb Z}\bb P_1\big[\mb X_t=x\big]\rho_0^n(x),
\end{equation}
where $\mb X$ denotes a random walk with generator $n^2\mc A_n$, and for $z\in \bb Z$ we denoted by $\bb P_z$ the distribution of $\mb  X$ when started in $z$.
Using that by symmetry $\bb P_1[\mb X_t\geq 1]=\bb P_0[\mb X_t\leq 0]$ and $\bb P_1[\mb X_t\leq 0]=\bb P_0[\mb X_t\geq 1]$, as well as our choice of $\mu_n$, we see that
\begin{equation}\label{eq:strictlowerboundrho}
\rho_t^n(0)-\rho_t^n(1)\;=\; \pfrac14\Big(\bb P_0\big[\mb X_t\leq 0\big]-\bb P_0\big[\mb X_t\geq 1\big]\Big)\,.
\end{equation}
It is now possible to argue that a random walk that starts at zero, and that is reflected at zero has a local time of order $n$ up to times of order $n^2$ at the origin. Using a coupling argument one may then show that one can choose $\alpha$ small enough so that the probability that $\mb X$, when started at $0$, crosses the bond $(0,1)$ before time $Tn^2$ becomes arbitrarily small. This readily yields that~\eqref{eq:strictlowerboundrho} is indeed strictly bounded away from zero uniformly in $t\in[0,T]$, and consequently we obtain a lower bound that matches the order of the upper bound in~\eqref{eq:covarianceest}.

\begin{remark}\rm\label{rm28}
As argued above, at first sight it seems counterintuitive that $\varphi_t(x,y)$ is of order $\log n/n$ if $x\leq 0< 1\leq y$. Yet, an intuitive explanation for that phenomenon could be as follows: given an exclusion particle starting at $x\leq0$, then up to time say $\frac{t}{2} n^2$ there is a strictly positive probability that it will cross the bond $\{0,1\}$, and afterwards it will have interaction with  a particle started at $y\geq 1$ of same order as if it had started at a site $x\geq 1$.
\end{remark}

\section{Proof of density fluctuations}\label{s3}
In this section we prove Theorem~\ref{thm26}. We follow the usual procedure to establish such a result, i.e., first we establish tightness of the sequence of density fields $\{\mc Y_t^n\!:\!t\in[0,T]\}_{n\in\bb N}$  and afterwards we characterize the limit.
Before proceeding,  we introduce in the next subsection some martingales associated with the density fluctuation field defined in \eqref{density field}.

\subsection{Associated martingales}
Fix a test function $f\in\mc S_\alpha(\bb R)$. By Dynkin's formula, 
\begin{equation}\label{martingal}
\mc M^n_t(f)\;:=\;\mc Y^n_t(f)- \mc Y^n_0(f)-\int_{0}^{t}(n^{2}\mc L_{n}+\partial_s)\,\mc Y^n_s(f)\,ds
\end{equation}
is a martingale with respect to the natural filtration $\mathcal{F}_{t}=\sigma(\eta_{s},s\leq{t})$. Our aim is to write this martingale in a more suitable form.  Recall \eqref{xi}.  Performing simple calculations,\allowdisplaybreaks
\begin{align*}
& n^2\mc L_{n}\mc Y^n_s(f)=\\
&=n^2 \sum_{x\in{\bb Z}}\xi_{x, x+1}^n\Big[ \pfrac{1}{\sqrt{n}}\sum_{y\in\bb Z} f(\pfrac{y}{n})(\eta^{x,x+1}_s (y)-\rho_s^n(y) )-
 \pfrac{1}{\sqrt{n}}\sum_{y\in\bb Z} f(\pfrac{y}{n})(\eta_s (y)-\rho_s^n(y) )\Big]\\
&= \pfrac{1}{\sqrt{n}}\sum_{x\in{\bb Z}}n^2\xi_{x, x+1}^n\Big\{\eta_s (x)\Big[ f(\pfrac{x+1}{n})
- f\big(\pfrac{x}{n}\big) \Big]+\eta_s (x+1)\Big[f\big(\pfrac{x}{n}\big)
- f(\pfrac{x+1}{n}) \Big]\Big\}\\
&= \pfrac{1}{\sqrt{n}}\sum_{x\in{\bb Z}}n^2\Big\{\xi_{x, x+1}^n\Big[f(\pfrac{x+1}{n})
- f\big(\pfrac{x}{n}\big) \Big]+\xi_{x-1,x}^n\Big[ f(\pfrac{x-1}{n})
- f\big(\pfrac{x}{n}\big) \Big]\Big\}\eta_s (x)\\
&= \pfrac{1}{\sqrt{n}}\sum_{x\in{\bb Z}}n^2\mca_n f\big(\pfrac{x}{n}\big)\eta_s(x)\,,
\end{align*}
where  the operator $\mca_n$  has been defined in \eqref{op_B}.
Recalling \eqref{disc_heat} we get that
\begin{equation}\label{partial Y}
 \begin{split}
  \partial_s\,\mc Y^n_s(f)\;=\; - \pfrac{1}{\sqrt{n}}\sum_{x\in{\bb Z}}f\big(\pfrac{x}{n}\big)\partial_s\rho_s^n(x) \!=\!-\pfrac{1}{\sqrt{n}}\sum_{x\in{\bb Z}}n^2\mca_n f\big(\pfrac{x}{n}\big)\rho_s^n(x)\,.
 \end{split}
\end{equation}
Combining the previous equalities, we see that
\begin{equation}\label{eq:mart_decom}
\mc M^n_t(f)\;=\;\mc Y^n_t(f)- \mc Y^n_0(f)-\int_{0}^{t}\pfrac{1}{\sqrt{n}}\sum_{x\in{\bb Z}}n^2\mca_n f\big(\pfrac{x}{n}\big)\overline{\eta}_s(x)\,ds.
\end{equation}
 Adding and subtracting the term $\int_0^t \mc Y^n_s(\Delta_\alpha f)ds$, we can rewrite the martingale $\mc M^n_t(f)$ as
\begin{equation}\label{martdecomp3.4}
\mc M^n_t(f)\;=\;\mc Y^n_t(f)- \mc Y^n_0(f)-\int_{0}^{t}\mc Y^n_s(\Delta_\alpha f)ds-R_t^{n}(f)\,,
\end{equation}
where
\begin{equation*}
R_t^{n}(f)\;:=\;\int_{0}^{t}\frac{1}{\sqrt{n}}\sum_{x\in\bb Z}\Big\{n^2\mca_n f\big(\pfrac{x}{n}\big)-(\Delta_\alpha f)\big(\pfrac{x}{n}\big)\Big\}\overline{\eta}_{s}(x)\,ds\,.
\end{equation*}
The next lemma allows us to control the error term  $R_t^{n}(f)$ defined in the previous display, which is obtained by replacing the discrete operator $\mc A_n$ defined in \eqref{op_B} by the continuous Laplacian $\Delta_\alpha$ defined in \eqref{laplacian}.
\begin{lemma}\label{lemma31aa}
For any $f\in\mc S_\alpha (\bb R)$, almost surely there exists a constant $c>0$ such that  for all $t\in[0,T]$ and all $n\in \bb N$ the estimate $|R_t^{n}(f)|\leq \frac{ct}{\sqrt{n}}$ holds.
\end{lemma}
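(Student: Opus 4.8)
The plan is to strip away the randomness right away and reduce the statement to a deterministic analytic estimate on the test function. Since $|\overline\eta_s(x)|=|\eta_s(x)-\rho_s^n(x)|\le 1$ for every $s$, every $x$ and every configuration, moving the absolute value inside the time integral and the sum in the definition of $R_t^n(f)$ gives, surely,
\[|R_t^n(f)|\;\le\;\frac{t}{\sqrt n}\sum_{x\in\bb Z}\Big|n^2\mca_n f\big(\tfrac{x}{n}\big)-(\Delta_\alpha f)\big(\tfrac{x}{n}\big)\Big|\,,\]
where, with the same abuse of notation as in \eqref{eq:mart_decom}, $\mca_n$ is applied to the discrete function $x\mapsto f(x/n)$. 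It therefore suffices to prove that $\Sigma_n:=\sum_{x\in\bb Z}\big|n^2\mca_n f(\tfrac{x}{n})-(\Delta_\alpha f)(\tfrac{x}{n})\big|\le C$ with $C$ independent of $n$; since this bound does not see the configuration, the constant $c$ in the statement can be taken deterministic and the ``almost surely'' becomes automatic.

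To control $\Sigma_n$ I would split $\bb Z$ into the two \emph{boundary} sites $x\in\{0,1\}$, where the slow rate $\alpha/n$ appears, and the \emph{bulk} $x\le-1$ or $x\ge 2$. For a bulk site both rates $\xi^n_{x-1,x},\xi^n_{x,x+1}$ equal $1$, so $n^2\mca_n f(\tfrac{x}{n})=n^2\big(f(\tfrac{x+1}{n})-2f(\tfrac{x}{n})+f(\tfrac{x-1}{n})\big)$ is the ordinary discrete Laplacian; moreover the three points $\tfrac{x-1}{n},\tfrac{x}{n},\tfrac{x+1}{n}$ all lie strictly on one side of the origin, where $f$ is $C^\infty$ and $(\Delta_\alpha f)(\tfrac{x}{n})=f''(\tfrac{x}{n})$. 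A Taylor expansion with Lagrange remainder then yields $\big|n^2\mca_n f(\tfrac{x}{n})-f''(\tfrac{x}{n})\big|\le \tfrac{C}{n^2}\sup_{|u-x/n|\le 1/n}|f^{(4)}(u)|$. Summing, the $O(n)$ terms with $|x|\le n$ contribute $O(1/n)$ (bound $f^{(4)}$ by $\|f\|_{4,0}$), and the terms with $|x|>n$ contribute $O(1/n)$ as well, using the decay $|f^{(4)}(u)|\le \|f\|_{4,2}/(1+u^2)$ together with $|u|\ge |x|/(2n)$ on the relevant interval. Hence the bulk part of $\Sigma_n$ is $O(1/n)$.

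The substantive point is the two boundary terms, and this is exactly where the defining relation \eqref{eq:bound_beta_1} of $\mc S_\alpha(\bb R)$ enters. At $x=1$ one computes $n^2\mca_n f(\tfrac1n)=n^2\big(f(\tfrac2n)-f(\tfrac1n)\big)+\alpha n\big(f(0)-f(\tfrac1n)\big)$; expanding $f(\tfrac1n),f(\tfrac2n)$ about $0^+$ and using the left-continuity $f(0)=f(0^-)$ isolates an $O(n)$ contribution equal to $n\big(f'(0^+)-\alpha(f(0^+)-f(0^-))\big)$, which vanishes \emph{precisely} by the $k=0$ instance of \eqref{eq:bound_beta_1}. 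What remains is $\tfrac32 f''(0^+)-\alpha f'(0^+)+O(1/n)$, whereas $(\Delta_\alpha f)(\tfrac1n)=f''(\tfrac1n)=f''(0^+)+O(1/n)$, so the $x=1$ term of $\Sigma_n$ is bounded uniformly in $n$. The site $x=0$ is treated in the same way, now carrying out the Taylor expansions separately at $0^+$ and $0^-$, and again the $k=0$ case of \eqref{eq:bound_beta_1} cancels the $O(n)$ piece. Adding the two bounded boundary terms to the $O(1/n)$ bulk estimate gives $\Sigma_n\le C$, hence $|R_t^n(f)|\le Ct/\sqrt n$ for all $t\in[0,T]$ and all $n\in\bb N$.

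The only genuinely delicate step is the bookkeeping in the boundary Taylor expansions of the previous paragraph: one has to keep track of the one-sided derivatives $f^{(k)}(0^\pm)$, remember that $f(0)$ means $f(0^-)$, and pin down the exact linear combination that is annihilated by \eqref{eq:bound_beta_1}. The $\ell^1$ reduction via $|\overline\eta|\le1$ and the bulk discrete-Laplacian estimate are routine.
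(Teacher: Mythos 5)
Your proof is correct and follows essentially the same route as the paper: bound $|\overline{\eta}_s(x)|$ by a constant to reduce to a deterministic sum, split that sum into the two boundary sites $x\in\{0,1\}$ and the bulk $x\notin\{0,1\}$, then use Taylor expansion in the bulk and the Robin condition \eqref{eq:bound_beta_1} at $k=0$ to cancel the $O(n)$ term at the boundary. Your fourth-order Taylor expansion in the bulk is in fact slightly cleaner bookkeeping than the paper's third-order version (whose displayed equality drops a factor of $n^2$), but the plan and all the key ingredients are the same.
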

\begin{proof}
We begin by splitting $R_t^{n}(f)$ as the sum
\begin{align}
R_t^n(f)\;=\;&\int_{0}^{t}\frac{1}{\sqrt{n}}\sum_{x\neq 0,1}\Big\{ n^2\mca_{n}f\big(\pfrac{x}{n}\big)-(\Delta_\alpha f)\big(\pfrac{x}{n}\big)\Big\}\overline{\eta}_{s}(x)\,ds\label{34}\\
&+\int_{0}^{t}\frac{1}{\sqrt{n}}\Big\{ n^2\mca_{n}f\big(\pfrac{0}{n}\big)-(\Delta_\alpha f)\big(\pfrac{0}{n}\big)\Big\}\overline{\eta}_{s}(0)\,ds\label{35}\\
&+\int_{0}^{t}\frac{1}{\sqrt{n}}\Big\{ n^2\mca_{n}f\big(\pfrac{1}{n}\big)-(\Delta_\alpha f)\big(\pfrac{1}{n}\big)\Big\}\overline{\eta}_{s}(1)\,ds\,.\label{36}
\end{align}
We begin by dealing with~\eqref{34}.
Recall that $f\in \mc S_\alpha(\bb R)$ and note that $|\overline{\eta}_{s}(x)|\leq 2$. Thus, taking advantage of the fact that for $x\notin\{0,1\}$, the term $n^2\mca_n f\big(\pfrac{x}{n}\big)$ is  the discrete Laplacian, and  applying a Taylor expansion up to second order with the Lagrangian form of the remainder, we see that~\eqref{34} is bounded by
 \begin{equation*}
\begin{split}
&\frac{t}{\sqrt{n}}\sum_{x\neq0,1}\Big|n^2\Big\{\Big[\pfrac{1}{n}f'\big(\pfrac{x}{n}\big)+\pfrac{1}{2n^2}f''\big(\pfrac{x}{n}\big)+\pfrac{f'''\big(\vartheta^+(\frac{x}{n})\big)}{3!n^3}\Big]\Big.\\
&\hspace{2.5cm}\Big.-\Big[\pfrac{1}{n}f'\big(\pfrac{x}{n}\big)-\pfrac{1}{2n^2}f''\big(\pfrac{x}{n}\big)+\pfrac{f'''\big(\vartheta^-(\frac{x}{n})\big)}{3!n^3}\Big]\Big\}-\big(\Delta_\alpha f\big)\big(\pfrac{x}{n}\big)\Big|\\
&=\frac{t}{\sqrt{n}}\sum_{x\neq0,1}\Big|\pfrac{f'''\big(\vartheta^+(\frac{x}{n})\big)}{3!n^3}-\pfrac{f'''\big(\vartheta^-(\frac{x}{n})\big)}{3!n^3}\Big\}\Big|\,,
\end{split}
\end{equation*}
where $\vartheta^+(\frac{x}{n})\in[\frac{x}{n},\frac{x+1}{n}]$ and  $\vartheta^-(\frac{x}{n})\in[\frac{x-1}{n},\frac{x}{n}]$.
Since $f'''$ is  integrable, we conclude that \eqref{34} is of order $O(tn^{-5/2})$, and vanishes as  $n$ tends to infinity.  Since $\Delta_\alpha f$ is bounded, we can see  that the sum of \eqref{35} and \eqref{36} is equal to
\begin{equation*}
\int_{0}^{t}\frac{1}{\sqrt{n}}\Big\{n^2\mca_n f\big(\pfrac{0}{n}\big)\Big\}\overline{\eta}_{s}(0)\,ds+\int_{0}^{t}\frac{1}{\sqrt{n}}\Big\{n^2\mca_n f\big(\pfrac{1}{n}\big)\Big\}\overline{\eta}_{s}(1)\,ds\,
\end{equation*}
plus a term of order $O(\frac{t}{\sqrt{n}})$. Applying the definition of $\mca_n$, the  expression above is equal to
\begin{equation*} 
\begin{split}
&\int_{0}^{t}\frac{n^2}{\sqrt{n}}\Big\{\frac{\alpha}{n}\Big(f\big(\pfrac{1}{n}\big)-f\big(\pfrac{0}{n}\big)\Big)+\Big(f\big(\pfrac{-1}{n}\big)-f\big(\pfrac{0}{n}\big)\Big)\Big\}\overline{\eta}_{s}(0)\,ds\\
+&\int_{0}^{t}\frac{n^2}{\sqrt{n}}\Big\{\frac{\alpha}{n}\Big(f\big(\pfrac{0}{n}\big)-f\big(\pfrac{1}{n}\big)\Big)+\Big( f\big(\pfrac{2}{n}\big)-f\big(\pfrac{1}{n}\big)\Big)\Big\}\overline{\eta}_{s}(1)\,ds\,,
 \end{split}
\end{equation*}
and  we can see that the absolute value of  expression above is bounded by
\begin{equation}\label{eq3.7}
\begin{split}
&t\sqrt{n}\Big\{\,\Big|\alpha\Big(f\big(\pfrac{1}{n}\big)-f\big(\pfrac{0}{n}\big)\Big)+n\Big(f\big(\pfrac{-1}{n}\big)-f\big(\pfrac{0}{n}\big)\Big)\Big|\\
&\hspace{2cm}+\Big|\alpha\Big(f\big(\pfrac{0}{n}\big)-f\big(\pfrac{1}{n}\big)\Big)+n\Big( f\big(\pfrac{2}{n}\big)-f\big(\pfrac{1}{n}\big)\Big)\Big|\,\Big\}\,.
\end{split}
\end{equation}
 Since  $f\in\mc S_\alpha(\bb R)$, we have the boundary conditions 
$\alpha \big(f(0^+)-f(0^-)\big)=\p_u f(0^+)$ $=\p_u f(0^-)$ and also that $f$ is left continuous  at zero, hence
\begin{align*}
& f\big(\pfrac{1}{n}\big)-f\big(\pfrac{0}{n}\big)\;=\;\Big[f(0^+)-f(0^-)\Big]+O(1/n)\,,\\
&n\Big[f\big(\pfrac{-1}{n}\big)-f\big(\pfrac{0}{n}\big)\Big]\;=\;-\p_u f(0^-)+O(1/n)\,,\\
& f\big(\pfrac{0}{n}\big)-f\big(\pfrac{1}{n}\big)\;=\;-\Big[f(0^+)-f(0^-)\Big]+O(1/n)\,,\\
&n\Big[f\big(\pfrac{2}{n}\big)-f\big(\pfrac{1}{n}\big)\Big]\;=\;\p_u f(0^+)+O(1/n)\,,
\end{align*}
which permits to conclude that \eqref{eq3.7} is of order $O(\tfrac{t}{\sqrt{n}})$, finishing the proof.
\end{proof}
Now we study the convergence of the sequence of martingales $\{\mc M^n_t(f)\!:\!t\in[0,T]\}_{n\in\bb N}$. This is the content of the next lemma. 
\begin{lemma}\label{lemma32}
For any $f\in\mc S_\alpha(\bb R)$, the sequence of martingales $\{\mc M^n_t(f):t\in [0,T]\}_{n\in \bb N}$ converges in  distribution under the topology of $\mc D([0,T], \bb R)$, as $n\to\infty$, to a mean-zero Gaussian process $\{\mathcal M_t(f):t\in [0,T]\}$ of  quadratic variation given by 
\begin{equation}\label{quadvar}
\begin{split}
\<\mc M(f)\>_t\;=\;& \int_0^t \int_{\bb R}2\chi(\rho_s(u))(\nabla_\alpha f(u))^2\, du\,ds\\
 +&\int_0^t \Big[\rho_s(0^-)(1-\rho_s(0^+))+\rho_s(0^+)(1-\rho_s(0^-))\Big]\nabla_\alpha f(0^+)\,ds\,.
\end{split}
\end{equation}
\end{lemma}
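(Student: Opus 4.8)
The plan is to deduce the stated convergence from a classical martingale central limit theorem (see, e.g., the appendix of \cite{kl}), which applies as soon as one verifies: (a) the predictable quadratic variation $\langle\mc M^n(f)\rangle_t$ converges, in probability and for each fixed $t\in[0,T]$, to the deterministic function on the right-hand side of \eqref{quadvar}; and (b) the maximal jump size $\sup_{s\le T}|\mc M^n_s(f)-\mc M^n_{s-}(f)|$ tends to $0$. Condition (b) is immediate: the discontinuities of $t\mapsto\mc M^n_t(f)$ are exactly those of $\eta$, the remaining terms in \eqref{eq:mart_decom} being absolutely continuous, and a jump across the bond $\{x,x+1\}$ changes $\mc M^n_t(f)$ by $\tfrac{1}{\sqrt n}\big(f(\tfrac{x+1}{n})-f(\tfrac{x}{n})\big)(\eta_t(x)-\eta_t(x+1))$, whose modulus is $O(n^{-3/2})$ for $x\neq0$ (since $f\in\mc S_\alpha(\bb R)$ is Lipschitz off the origin) and $O(n^{-1/2})$ for $x=0$; so the maximal jump is $O(n^{-1/2})$. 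For (a), applying Dynkin's formula to the square of $\mc M^n(f)$, exactly as in the derivation of \eqref{eq:mart_decom}, shows that $\langle\mc M^n(f)\rangle_t=B^n_t(f):=\int_0^t V^n_s(f)\,ds$ with
\begin{equation*}
V^n_s(f)\;=\;n\sum_{x\in\bb Z}\xi_{x,x+1}^n\big(f(\tfrac{x+1}{n})-f(\tfrac{x}{n})\big)^2\big(\eta_s(x)-\eta_s(x+1)\big)^2 .
\end{equation*}
Splitting off the slow bond and using \eqref{xi}, $V^n_s(f)=\tfrac{1}{n}\sum_{x\neq0}\big(\nabla^+_n f(\tfrac{x}{n})\big)^2\big(\eta_s(x)-\eta_s(x+1)\big)^2+\alpha\big(f(\tfrac{1}{n})-f(0)\big)^2\big(\eta_s(0)-\eta_s(1)\big)^2$, where $\nabla^+_n f(\tfrac{x}{n})=n\big(f(\tfrac{x+1}{n})-f(\tfrac{x}{n})\big)$.

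I would first prove that $\bb E_{\mu_n}[B^n_t(f)]$ converges to the right-hand side of \eqref{quadvar}. Using $(\eta(x)-\eta(x+1))^2=\eta(x)+\eta(x+1)-2\eta(x)\eta(x+1)$, one gets
\begin{equation*}
\bb E_{\mu_n}\big[(\eta_s(x)-\eta_s(x+1))^2\big]\;=\;\rho^n_s(x)(1-\rho^n_s(x+1))+\rho^n_s(x+1)(1-\rho^n_s(x))-2\varphi^n_s(x,x+1)\,.
\end{equation*}
By Theorem~\ref{Prop25} the correlation term is $O(\log n/n)$, and by Theorem~\ref{prop24}---more precisely by the uniform bound $\sup_{s\le T}\sup_{x\neq0}|\rho^n_s(x)-\rho_s(\tfrac{x}{n})|\le C/n$ that comes out of its proof---the first two summands equal $2\chi(\rho_s(\tfrac{x}{n}))+O(1/n)$. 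Inserting this into the bulk part of $V^n_s(f)$, which involves $O(n)$ non-negligible sites and for which the boundary conditions \eqref{eq:bound_beta_1} make $(\partial_u f)^2$ continuous at the origin, one recognises a convergent Riemann sum and obtains $\tfrac{1}{n}\sum_{x\neq0}(\nabla^+_n f(\tfrac{x}{n}))^2\,\bb E_{\mu_n}[(\eta_s(x)-\eta_s(x+1))^2]\to\int_{\bb R}2\chi(\rho_s(u))(\nabla_\alpha f(u))^2\,du$. For the boundary term, left-continuity of $f$ at $0$ together with \eqref{eq:bound_beta_1} gives $\alpha\big(f(\tfrac{1}{n})-f(0)\big)^2\to\alpha\big(f(0^+)-f(0^-)\big)^2=\tfrac{1}{\alpha}\big(\nabla_\alpha f(0^+)\big)^2$, while (using $\rho^n_s(0)\to\rho_s(0^-)$, $\rho^n_s(1)\to\rho_s(0^+)$ and again Theorem~\ref{Prop25}) $\bb E_{\mu_n}[(\eta_s(0)-\eta_s(1))^2]\to\rho_s(0^-)(1-\rho_s(0^+))+\rho_s(0^+)(1-\rho_s(0^-))$; together these reproduce the Dirac contribution in \eqref{quadvar}. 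Integrating in $s$ and using bounded convergence then yields $\bb E_{\mu_n}[B^n_t(f)]\to$ the right-hand side of \eqref{quadvar}.

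The remaining---and by far the most delicate---point is the concentration $\mathrm{Var}_{\mu_n}(B^n_t(f))\to0$, which upgrades the previous step to convergence in probability and completes (a). Writing $\mathrm{Var}_{\mu_n}(B^n_t(f))=\int_0^t\!\!\int_0^t\mathrm{Cov}_{\mu_n}(V^n_s(f),V^n_{s'}(f))\,ds\,ds'$ and expanding $V^n$ into the nearest-neighbour functions $\eta(x)+\eta(x+1)-2\eta(x)\eta(x+1)$, the Markov property at $\min(s,s')$ together with the evolution equations for the one- and two-point functions (as in \eqref{disc_heat} and in the proof of Proposition~\ref{prop:decorrprop}) rewrites each time-displaced covariance as a combination of equal-time truncated two-, three- and four-point correlations weighted by single-particle transition probabilities $\bb P_x[\mb X_{rn^2}=\,\cdot\,]$. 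The bulk--bulk contributions carry coefficients of order $1/n$ on $O(n)$ sites, so Theorem~\ref{Prop25}---supplemented by $O(\log n/n)$ bounds for the higher truncated correlations, obtainable by an extension of the local-time scheme of Section~\ref{s5}---makes them $O(\log n/n)$. The boundary term has an $O(1)$ coefficient in front of the single function $(\eta_s(0)-\eta_s(1))^2$; here one uses that the occupation variables at $0$ and $1$ are refreshed through the fast bonds $\{-1,0\}$ and $\{1,2\}$, so that the heat-kernel identity of Lemma~\ref{lem:heatkernel} and the bound of Lemma~\ref{prop42} give $\big|\mathrm{Cov}_{\mu_n}\big((\eta_s(0)-\eta_s(1))^2,(\eta_{s'}(0)-\eta_{s'}(1))^2\big)\big|\le C\big(n^{-1}|s-s'|^{-1/2}+\log n/n\big)$, whose integral over $[0,t]^2$ is $O(n^{-1})+O(\log n/n)\to0$. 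With (a) and (b) verified, the martingale CLT yields $\mc M^n(f)\Rightarrow\mc M(f)$ in $\mc D([0,T],\bb R)$, with $\mc M(f)$ a continuous martingale vanishing at $0$ whose deterministic quadratic variation is \eqref{quadvar}; by L\'evy's characterisation $\mc M(f)$ is a deterministic time-change of Brownian motion, hence the mean-zero Gaussian process claimed. I expect the concentration of the boundary part of $B^n_t(f)$---which, unlike the bulk, is not covered by Theorem~\ref{Prop25} and genuinely requires the heat-kernel and local-time estimates of Section~\ref{s5}---to be the main obstacle.
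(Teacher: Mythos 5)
Your strategy shares the outer shell with the paper's proof (a martingale CLT with two inputs: vanishing maximal jump and convergence in probability of the predictable quadratic variation; your condition (b) and jump bound agree with what the paper does, up to the sharper $O(n^{-3/2})$ off the origin), but diverges on the key part, namely the proof of (a). The paper does not attempt a second-moment computation for $B^n_t(f)$: it invokes a \emph{Replacement Lemma} (a one-block estimate adapted to avoid the slow bond, citing \cite[Lemma~5.4]{fgn1}) together with the hydrodynamic limit Theorem~\ref{thm21} to conclude that the additive functional in~\eqref{eq311aaa} converges in distribution to the deterministic limit~\eqref{quadvar}. In other words, the paper replaces nearest-neighbour products such as $\eta_s(x)\eta_s(x+1)$ by functions of local density averages $\eta^{\ell}(x)$, and then lets the law of large numbers of Theorem~\ref{thm21} do the remaining work. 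Your route --- compute $\bb E_{\mu_n}[B^n_t(f)]$ from Theorems~\ref{prop24} and~\ref{Prop25}, then kill the variance --- is a genuinely different decomposition. Your first-moment step is correct and self-contained: it uses only the discrete-derivative bound $\sup_{s,x\neq0}|\rho^n_s(x)-\rho_s(\tfrac{x}{n})|\le C/n$ from the proof of Theorem~\ref{prop24}, the two-point bound of Theorem~\ref{Prop25}, and the boundary conditions~\eqref{eq:bound_beta_1}, and it does indeed reproduce both the bulk and the Dirac contributions of~\eqref{quadvar}.

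The genuine gap is in your variance step. You need $\mathrm{Var}_{\mu_n}(B^n_t(f))\to0$, and as you yourself note this forces you to control \emph{truncated three- and four-point correlations} at a rate that beats the $O(n)$ and $O(n^2)$ entropy of the corresponding site sums. Theorem~\ref{Prop25} gives only the two-point bound, and nothing in the paper provides the higher-order analogues; you gesture at ``an extension of the local-time scheme of Section~\ref{s5}'', but such an extension would require setting up and analysing a $d$-dimensional ($d=3,4$) inhomogeneous walk on a cone with several families of slow edges, a non-trivial piece of new work, not a routine adaptation. Likewise the boundary covariance bound $\big|\mathrm{Cov}_{\mu_n}\big((\eta_s(0)-\eta_s(1))^2,(\eta_{s'}(0)-\eta_{s'}(1))^2\big)\big|\le C\big(n^{-1}|s-s'|^{-1/2}+\log n/n\big)$ is asserted, not proved; Lemma~\ref{lem:heatkernel} and Lemma~\ref{prop42} as stated only give one-particle heat-kernel facts, and turning those into a two-time, four-point covariance bound at the slow bond still requires a genuine argument. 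So your proposal, as written, does not close. The paper sidesteps precisely these difficulties by using the Replacement Lemma (which trades pointwise correlation control for an entropy/Dirichlet-form estimate), at the cost of importing machinery from \cite{kl} and \cite{fgn1} and leaving the final step a sketch. If you want to keep your moment-method route, you would need to actually establish the higher-point truncated correlation bounds; otherwise you should follow the paper and prove (a) via a Replacement Lemma.
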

\begin{proof}
The proof of this lemma consists  on applying \cite[Theorem VIII.3.12, page 473]{js}. According to that theorem, we  have to check:
\begin{enumerate}[\bf i)]
\item condition (3.14), defined  in \cite[page 474]{js},
\item condition [$\hat\delta_5$-D], defined in \cite[3.4, page 470]{js},
\item condition [$\gamma_5$-D], defined in \cite[3.3, page 470]{js}.
\end{enumerate}

By~\cite[Assertion VIII.3.5, page 470]{js}, both 
conditions  [$\hat\delta_5$-D] and  (3.14) are a consequence of
\begin{equation}\label{eq3.9limit}
\lim_{n\to\infty} \bb E_{\mu_n} \Big[ \sup_{s\le t} \big| \mc M^n_s(f) -
\mc M^n_{s-}(f)\big| \Big]\;=\; 0.
\end{equation}
To show~\eqref{eq3.9limit},  note that only two sites of the configuration $\eta$  change its values when a jump occurs. Therefore, 
\begin{equation*}
\sup_{s\le t} \big| \mc M^n_s(f) -
\mc M^n_{s-}(f)\big| \;=\; \sup_{s\le t} \big| \mc Y^n_s(f) -
\mc Y^n_{s-}(f)\big|\; \leq \;\frac{2\|f\|_\infty}{\sqrt n}\,,
\end{equation*}
leading to \eqref{eq3.9limit}. It remains to check Condition 
[$\gamma_5$-D], i.e., the convergence in probability of the quadratic variation of $\mc M_t(f)$, which is given by 
\begin{equation*}
\langle \mathcal{M}^n(f) \rangle_t \;=\; \int_0^t n^2 \Big[\mathcal{L}_n\mathcal{Y}^n_s(f)^2-2\mathcal{Y}^n_s(f)\mc L_n \mathcal{Y}_s^n(f)\Big]\, ds\,.
\end{equation*}
After some elementary computations, the right hand-side of the display above can be rewritten as
\begin{equation}\label{eq311aaa}
\begin{split}
&\int_0^t \frac{1}{n}\sum_{x\neq0}(\eta_s(x)-\eta_s(x+1))^2\Big[n\Big(f\big(\pfrac{x+1}{n}\big)-f\big(\pfrac{x}{n}\big)\Big)\Big]^2\,ds\\
+&\alpha\int_0^t (\eta_s(0)-\eta_s(1))^2\Big(f\big(\pfrac{1}{n}\big)-f\big(\pfrac{0}{n}\big)\Big)^2\,ds\,.
\end{split}
\end{equation}
 which is an \textit{additive functional} of the exclusion process $\eta_t$.  
 It is almost folklore in the literature that~Theorem~\ref{thm21} together with a suitable \textit{Replacement Lemma} and standard computations yield that 
\eqref{eq311aaa} converges in distribution to the right hand-side of \eqref{quadvar} as $n\to\infty$. 
Since this is not the main issue of the proof, and since such a Replacement Lemma under the slow bond's presence has been studied in previous works (as in \cite[Lemma 5.4]{fgn1} for instance), we do not present the proof of this result with  full details, but only  a sketch instead.

By a \textit{Replacement Lemma} we mean a result allowing to replace the time integral 
of the occupation number $\eta_t(x)$ by an average on a box around $x$. The only difference with respect to the usual Replacement Lemma (see \cite{kl}), is the fact that we should avoid an intersection between { this box} and the slow bond in our setting. Hence, we define
 \begin{equation*}
 \eta^{\ell}(x) \;=\; \begin{cases}\displaystyle
 \frac{1}{\ell} \sum_{y=x}^{x+\ell-1}\eta(y)\,,& \quad \text{ for } x\geq 1\,,\vspace{5pt}\\
\displaystyle\frac{1}{\ell} \sum_{y=x-\ell+1}^{x}\eta(y)\,, &\quad \text{ for } x\leq 0\,,
 \end{cases}
  \end{equation*}
which is  related to the side limits appearing in \eqref{quadvar}. 
Taking into account these definitions, the fact that $\eta_t(x)^2=\eta_t(x)$, and the boundary condition of $f$ at zero, one can show that the limit in distribution of \eqref{eq311aaa} is in fact the right hand-side of \eqref{quadvar}.

Since the right hand-side of \eqref{quadvar} is deterministic, the convergence in distribution implies the convergence  in probability, and this  finishes  the proof of the lemma.
\end{proof}

\subsection{Tightness}\label{subsec3.5}
Let $\mc S$ be a Frech\'et space (see \cite{reedsimon} for a definition of a Frech\'et space) and denote by $\mc S'$ its topological dual. We cite here the following useful criterion:
\begin{proposition}[Mitoma's criterion,  \cite{Mitoma}] \label{mitoma}\quad 
A sequence  of processes $\{x_t;t \in [0,T]\}_{n \in \bb N}$  in $\mc D([0,T],\mc {S}')$ is tight with respect to the
Skorohod topology if, and only if, the sequence $\{x_t(f);t \in [0,T]\}_{n \in \bb N}$ of real-valued processes is tight with
respect to the Skorohod topology of $\mc D([0,T], \bb R)$, for any $f \in \mc {S}$.
\end{proposition}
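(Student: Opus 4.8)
The plan is to prove the two implications separately, the forward one being soft and the converse one carrying the content. For ``only if'', I would observe that for each fixed $f\in\mc S$ the evaluation map $\pi_f\colon\mc D([0,T],\mc S')\to\mc D([0,T],\bb R)$ sending $\omega$ to $(\omega_t(f))_{t\in[0,T]}$ is continuous for the respective Skorohod topologies: if $\omega^k\to\omega$ via time changes $\lambda_k$, then $\sup_t|\omega^k_{\lambda_k(t)}(f)-\omega_t(f)|\to0$ since the trajectories stay in a bounded subset of $\mc S'$ and the pairing with the fixed $f$ is continuous on that set. Consequently tightness of the $n$-indexed family $\{x^n\}$ in $\mc D([0,T],\mc S')$ pushes forward under $\pi_f$ to tightness of $\{x^n(f)\}$ in $\mc D([0,T],\bb R)$.

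For the converse, I would use that $\mc S$ is a nuclear Fr\'echet space, so its topology is generated by an increasing sequence of Hilbertian seminorms $\|\cdot\|_0\le\|\cdot\|_1\le\cdots$ such that for each $p$ there is $q>p$ with the canonical inclusion $\mc S_q\hookrightarrow\mc S_p$ of completions Hilbert--Schmidt; dualizing gives $\mc S'=\bigcup_p\mc S_{-p}$ with $\mc S_{-p}\hookrightarrow\mc S_{-q}$ Hilbert--Schmidt for $q>p$. Fixing $\varepsilon>0$, I would pick such a pair $p<q$ and an orthonormal basis $(\phi_j)_{j\ge1}$ of $\mc S_q$ consisting of elements of $\mc S$. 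For each $j$, one-dimensional tightness of $\{x^n(\phi_j)\}_n$, via Prokhorov's theorem and the standard characterization of relative compactness in $\mc D([0,T],\bb R)$, yields a radius $R_j$ and a modulus $\eta_j(\delta)\downarrow0$ such that, off an event of probability at most $\varepsilon2^{-j}$ and uniformly in $n$, $\sup_t|x^n_t(\phi_j)|\le R_j$ and $w'_\delta(x^n(\phi_j))\le\eta_j(\delta)$. Summing against the square-summable weights furnished by the Hilbert--Schmidt embedding $\mc S_p\hookrightarrow\mc S_q$ would give, off an event of probability at most $\varepsilon$ and uniformly in $n$, a uniform bound on $\sup_t\|x^n_t\|_{-q}$ and a uniformly vanishing $\|\cdot\|_{-q}$-modulus; since the unit ball of $\mc S_{-p}$ is relatively compact in $\mc S_{-q}$, the characterization of compact subsets of $\mc D([0,T],\mc S_{-q})\subseteq\mc D([0,T],\mc S')$ confines the trajectories of $x^n$, up to probability $\varepsilon$, to a fixed compact set, which by Prokhorov is tightness of $\{x^n\}$ in $\mc D([0,T],\mc S')$.

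The hard part will be the path-space bookkeeping rather than the functional analysis: the modified Skorohod modulus $w'_\delta$ is not linear in the test function, so the per-coordinate estimates cannot simply be summed under a common time change. The remedy I would follow is to first restrict to the event on which every coordinate process has only finitely many jumps above a small threshold, control the ordinary modulus $w_\delta$ there (which behaves well under summation), and only at the end recover the $w'$-statement and hence $\mc D$-compactness; doing this while keeping track of the nuclearity indices $p,q$, the Hilbert--Schmidt norms, and the uniformity in $n$ is the delicate step. Everything else reduces to Prokhorov's theorem.
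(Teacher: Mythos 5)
The paper does not prove this proposition; it is imported wholesale from Mitoma's 1983 article and used as a black box, so there is no in-paper proof to compare against. I will therefore assess your proposal on its own.

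Your forward direction is sound. The converse is in the right neighbourhood --- nuclearity and the Hilbert--Schmidt scale $\{\mc S_p\}$ are exactly the tools Mitoma uses --- and you correctly identify the crux, namely that the Skorohod modulus $w'_\delta$ is not linear in the test function. But there are two concrete gaps. First, the summation step does not close: with $(\phi_j)$ an orthonormal basis of $\mc S_q$, Parseval gives $\|x^n_t\|_{-q}^2=\sum_j|x^n_t(\phi_j)|^2$, so bounding $\sup_t\|x^n_t\|_{-q}$ off a small event would require $\sum_j R_j^2<\infty$; yet each $R_j$ is whatever Prokhorov's theorem produces for the one-dimensional process $x^n(\phi_j)$ at tolerance $\varepsilon 2^{-j}$, and nothing forces these radii to decay (typically they grow with $j$). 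Mitoma closes this gap by a genuinely different mechanism: a Banach--Steinhaus/Montel step exploiting the barrelled and Montel properties of the nuclear space to show that the laws concentrate on a \emph{bounded}, hence relatively compact, subset of a fixed $\mc S_{-p}$, and only then moves to oscillation estimates in the Hilbertian norm. Coordinate-wise summation of radii is not a substitute for that lemma. Second, the proposed cure for the modulus --- restrict to an event with finitely many large jumps, control the ordinary modulus $w_\delta$, recover $w'_\delta$ afterwards --- is not a viable plan as stated: on any path with a jump, $w_\delta$ does not tend to $0$ as $\delta\to0$, no matter how few large jumps there are, so ``control $w_\delta$ there'' cannot be achieved; the entire raison d'\^etre of $w'_\delta$ is to accommodate a bounded number of jumps, and propagating $w'_\delta$-control from the scalar projections $x^n(\phi_j)$ to an $\mc S_{-q}$-valued modulus is precisely the nontrivial content of Mitoma's argument. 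Until both of these steps are carried out rigorously --- the uniform-boundedness reduction to a single $\mc S_{-p}$, and a careful Aldous/Skorohod estimate in that Hilbert space using the one-dimensional tightness --- the converse direction remains incomplete.
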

Since $\mc S_\alpha(\bb R)$ is a Frech\'et space (see \cite{fgn2}), tightness of the density field is reduced to showing tightness of a family of real-valued processes. For that purpose, let $f\in{\mc S}_\alpha(\bb R)$. Since the sum of tight processes is also tight, in order to prove tightness of  $\{\mc Y_t^n(f): t \in [0,T]\}_{n \in \bb N}$ it is enough to prove tightness of the remaining processes  appearing in \eqref{martdecomp3.4}, namely
$\{\mc Y_0^n(f)\}_{n \in
\bb N}$, $\{ \int_{0}^t\mc Y_s^n(\Delta_\alpha f)\, ds: t \in [0,T]\}_{n \in \bb N}$, $\{\mc M_t^n(f
): t \in [0,T]\}_{n \in \bb N}$ and  $\{R_t^n(f
): t \in [0,T]\}_{n \in \bb N}$. We deal with all of them separately.

Observe that
\begin{equation*}
\begin{split}
\bb E_{\nu_{\rho_0}^n(\cdot)}\Big[\Big(\mathcal Y_0^n(f)\Big)^2\Big]&\;=\;\frac{1}{n}\sum_{x\in\bb Z}f^2\Big(\frac xn\Big)\chi(\rho^n_0(x))+\frac 2n\sum_{x<y}f\Big(\frac xn\Big)f\Big(\frac yn\Big)\varphi^n_0(x,y)
\end{split}
\end{equation*}
is bounded. As a consequence of  Assumption~\textbf{(B)} in Theorem~\ref{thm26} the sequence of initial conditions $\mc Y_0^n$ converges, therefore it is also tight.

By Lemma~\ref{lemma31aa}, the sequence of  processes $\{R_t^n(f
): t \in [0,T]\}_{n \in \bb N}$ is negligible, thus it is  tight.

By Lemma \ref{lemma32} the sequence of martingales $\{\mc M_t^n(f
): t \in [0,T]\}_{n \in \bb N}$  converges, hence it is  tight as well.

It remains to prove tightness of the integral terms $\{ \int_{0}^t\mc Y_s^n(\Delta_\alpha f)\, ds: t \in [0,T]\}_{n \in \bb N}$. At this point we invoke \textit{Aldous' criterion}:
\begin{proposition}[Aldous' criterion]
 A sequence $\{x_t^n: t\in [0,T]\}_{n \in \bb N}$ of real-valued processes is tight with respect to the Skorohod topology of $\mc
D([0,T],\bb R)$ if:
\begin{enumerate}[\bf i)]
\item
$\displaystyle\lim_{A\rightarrow{+\infty}}\;\limsup_{n\rightarrow{+\infty}}\;\mathbb{P}\Big(\sup_{0\leq{t}\leq{T}}|x_{t
}^n |>A\Big)\;=\;0\,,$

\item for any $\varepsilon >0\,,$
 $\displaystyle\lim_{\delta \to 0} \;\limsup_{n \to {+\infty}} \;\sup_{\lambda \leq \delta} \;\sup_{\tau \in \mc T_T}\;
\mathbb{P}(|
x_{\tau+\lambda}^n- x_{\tau}^n| >\varepsilon)\; =\;0\,,$
\end{enumerate}
where $\mc T_T$ is the set of stopping times bounded by $T$.
\end{proposition}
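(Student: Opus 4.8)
The plan is to derive the statement from the classical characterization of tightness in $\mc D([0,T],\bb R)$ endowed with the Skorohod topology: a sequence $\{x^n\}_{n\in\bb N}$ of c\`adl\`ag processes is tight if and only if \textbf{(a)} for every $\eta>0$ there is $A>0$ with $\limsup_{n}\mathbb{P}(\sup_{0\le t\le T}|x^n_t|>A)\le\eta$, and \textbf{(b)} for every $\varepsilon>0$ one has $\lim_{\delta\to0}\limsup_{n}\mathbb{P}(w'_\delta(x^n)\ge\varepsilon)=0$, where $w'_\delta(x)$ denotes the c\`adl\`ag modulus of continuity on $[0,T]$, that is the infimum of $\max_i\sup_{t_{i-1}\le s,t<t_i}|x(t)-x(s)|$ over all partitions $0=t_0<\cdots<t_r=T$ with $\min_i(t_i-t_{i-1})>\delta$. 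Condition \textbf{(a)} is exactly hypothesis \textbf{i)}, so the entire content of the proof is to deduce \textbf{(b)} from hypothesis \textbf{ii)}.

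Towards \textbf{(b)}, I would first invoke the classical fact (due to Aldous; see \cite{kl,js}) that hypothesis \textbf{ii)} is equivalent to its reformulation with two stopping times, namely $\lim_{\delta\to0}\limsup_{n}\sup\{\mathbb{P}(|x^n_\tau-x^n_\sigma|\ge\varepsilon):\,\sigma,\tau\in\mc T_T,\ \sigma\le\tau\le\sigma+\delta\}=0$ for every $\varepsilon>0$; the passage between the two is an optional-stopping argument which I would simply transcribe. With this at hand, fix $\varepsilon>0$ and, for each path, define recursively the stopping times $S_0=0$ and $S_{k+1}=\inf\{t>S_k:\,|x^n_t-x^n_{S_k}|\ge\varepsilon\}\wedge T$. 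Since a c\`adl\`ag path on a compact interval performs only finitely many $\varepsilon$-oscillations, $S_k=T$ for all large $k$, almost surely. On the event that all consecutive gaps $S_{k+1}-S_k$ (over indices with $S_k<T$) exceed $\delta$, the $S_k$'s form an admissible partition in the definition of $w'_\delta$ and the oscillation of $x^n$ on each resulting block is a bounded multiple of $\varepsilon$ (using hypothesis \textbf{i)} to control jump sizes near the terminal block); hence $w'_\delta(x^n)$ is $O(\varepsilon)$ on that event, and $\{w'_\delta(x^n)>C\varepsilon\}\subseteq\{\exists\,k:\,S_k<T,\ S_{k+1}-S_k\le\delta\}$ for a suitable constant $C$.

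It then remains to bound $\mathbb{P}(\exists\,k:\,S_k<T,\ S_{k+1}-S_k\le\delta)$ uniformly over large $n$, and here the key device is a pigeonhole estimate: on $\{S_m<T\}$ one has $\sum_{j<m}(S_{j+1}-S_j)<T$, so at least $m/2$ of these $m$ gaps are $\le 2T/m$, whence $\frac{m}{2}\,\mathbb{P}(S_m<T)\le\sum_{j<m}\mathbb{P}(S_j<T,\ S_{j+1}-S_j\le 2T/m)$, each summand being controlled by the two-stopping-time quantity above; this yields $\mathbb{P}(S_m<T)\to0$ as $m\to\infty$, uniformly over large $n$. Splitting $\{\exists\,k:\,S_k<T,\ S_{k+1}-S_k\le\delta\}$ according to whether there are at least $m$ oscillations before $T$ or fewer than $m$ (a union over $k<m$, each term again bounded by the two-stopping-time quantity at scale $\delta$), and choosing $m$ large first and then $\delta$ small, gives \textbf{(b)}; combined with \textbf{(a)} this proves tightness. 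The main obstacle is precisely this passage from \textbf{ii)} to \textbf{(b)}: converting the control of a single stopping-time-evaluated increment into control of the full Skorohod modulus. The two delicate (but standard) points are the equivalence of the reformulations of \textbf{ii)} — which is genuinely measure-theoretic and cannot be obtained by a crude grid argument without losing a factor $1/\delta$ — and the accumulation over the infinitely many oscillation times $S_k$, which is precisely what the pigeonhole estimate resolves.
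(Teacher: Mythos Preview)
The paper does not prove this proposition at all: Aldous' criterion is stated as a known tool and used immediately, with no accompanying argument. So there is nothing in the paper to compare your proof against.

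That said, your sketch is the standard route (essentially Billingsley's or Jacod--Shiryaev's proof) and the architecture is sound: reduce to the characterization of tightness via the c\`adl\`ag modulus $w'_\delta$, introduce the $\varepsilon$-oscillation stopping times $S_k$, show that uniformly many oscillations are unlikely via the pigeonhole bound $\tfrac{m}{2}\,\mathbb P(S_m<T)\le\sum_{j<m}\mathbb P(S_j<T,\ S_{j+1}-S_j\le 2T/m)$, and then split on $\{S_m<T\}$ versus $\{S_m=T\}$. Two small points to tighten if you write this out in full: first, the appeal to hypothesis \textbf{i)} ``to control jump sizes near the terminal block'' is not quite the right lever---\textbf{i)} bounds suprema, not jump sizes; the terminal block is handled instead by noting that on $\{S_{j+1}=T\}$ either the oscillation on $[S_j,T)$ is already $<\varepsilon$ or one applies the stopping-time condition once more at $\sigma=S_j$, $\tau=T$. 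Second, the passage from condition \textbf{ii)} with deterministic $\lambda$ to the two-stopping-time version is indeed the only genuinely measure-theoretic step; it goes through a discretization of $\lambda$ and is short, but should be written out rather than merely cited if the goal is a self-contained proof.
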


We first check the first item of Aldous' criterion. By the Cauchy-Schwarz inequality,
\begin{align*}
&\mathbb{E}_{\mu^n}\Big[\sup_{t\leq {T}}\Big(\int_{0}^t\mc Y_s^n(\Delta_\alpha f)\, ds\Big)^2\Big]\\
&\leq\; T \int_{0}^T \mathbb{E}_{\mu^n}\Big[\Big(\frac{1}{\sqrt{n}}\sum_{x\in \bb Z}\Delta_\alpha f(\tfrac{x}{n})(\eta_{s}(x)-\rho^n_s(x))\Big)^2\Big]\, ds\,.
\end{align*}
Observe that the  right hand-side of the display  above is bounded  by $T^2$ times
\begin{equation}\label{estimate}
\frac{1}{{n}}\sum_{x\in \bb Z}\big(\Delta_\alpha f(\tfrac{x}{n})\Big)^2\sup_{t\leq{T}}\chi(\rho^n_t(x))
+\frac{2}{{n}}\sum_{x<y}\Delta_\alpha f(\tfrac{x}{n})\Delta_\alpha f(\tfrac{y}{n})\sup_{t\leq{T}}\varphi^n_t(x,y)\,,
\end{equation}
where $\chi(\rho^n_t(x))$ was defined above \eqref{Lambda} and $\varphi^n_t(x,y)$ is given in Definition~\ref{defcorr}. Since $f\in \mc S_\alpha(\bb R)$, the first term in~\eqref{estimate} may be easily shown to be bounded in $n$. As for the second term the estimate provided by Theorem~\ref{Prop25} is unfortunately not quite enough. Yet, Proposition~\ref{prop:decorrprop} in combination with Proposition~\ref{prop:2dlocaltimes} show that for some constants $c_1,c_2>0$ that do not depend on $t$, and $(x,y)$ we have that for all $n\in\bb N$,
\begin{equation}\label{eq:estofvarphi}
 \varphi^n_t(x,y)\;\leq\; \frac{c_1}{n}+\frac{c_2}{n}\int_0^{Tn^2} {\bf P}_{(x,y)}\Big[(\mb X_{s},\mb Y_{s})= (0,1)\Big]  \, ds\,,
\end{equation}
where $\{({\bf X}_t,{\bf Y}_t);\; t\geq 0\}$ is defined in  Subsection \ref{sec:est_dim_two}.
Plugging the first term on the right hand-side of the display above  into the second term in~\eqref{estimate} gives the desired estimate. To deal with the second term on the right hand-side of \eqref{eq:estofvarphi} we use the fact that by~\eqref{eq:estin01} we can estimate the integral term from above by
\begin{equation}
c \sum_{z\in \mc A}\int_0^{2Tn^2} \bb P_{(x,y)}\Big[(X_{s},Y_{s})= z\Big]\, ds\,,
\end{equation}
where $(X,Y)$ denotes simple random walk on $\bb Z^2$ jumping at total rate $2$, 
$\mc A$ denotes the set $\{(0,1), (1,1), (0,0), (1,0)\}$, and $c\in(0,+\infty)$ is some constant. Plugging this into the second term in~\eqref{estimate}, and using the reversibility of $(X,Y)$ we see that we obtain a term that is bounded from above by a constant times
\begin{equation}
\frac{1}{n^2}\sum_{z\in\mc A}\int_0^{2Tn^2}\bb E_z\big[|\Delta_\alpha f(\tfrac{X_s}{n})\Delta_\alpha f(\tfrac{Y_s}{n})|\big]\, ds\,.
\end{equation}
Since $|\Delta_\alpha f(\tfrac{x}{n})\Delta_\alpha f(\tfrac{y}{n})|$ is uniformly bounded in $x$ and $y$ we finally obtain that \eqref{estimate} is bounded by a constant, which implies condition~\textbf{i)} of Aldous' criterion via Chebychev's inequality.

We now check \textbf{ii)}. For this purpose,
fix a stopping time $\tau \in \mc T_T$. By Chebychev's inequality and repeating the argument above, we have that
 \begin{equation*}
\mathbb{P}_{\mu^n}\Big(\Big|  \int_{\tau}^{\tau+\lambda}\mc Y_s^n(\Delta_\alpha f)\, ds\;\Big| >\varepsilon\Big)
	\;\leq\; \frac{1}{\varepsilon^2} \mathbb{E}_{\mu^n}\Big[ \Big(  \int_{\tau}^{\tau+\lambda}\mc Y_s^n(\Delta_\alpha f)\, ds \;\Big)^2\Big]
	\;\leq\; \frac{\delta^2 c}{\varepsilon^2}\,,
\end{equation*}
which vanishes as $\delta\rightarrow{0}$, and yields tightness of the integral term, and concludes therefore the proof.

\subsection{Uniqueness of the  Ornstein-Uhlenbeck process}

The existence of the Ornstein-Uhlenbeck process solution of \eqref{eq Ou} is a consequence of tightness proved  in  Subsection \ref{subsec3.5}. This subsection is devoted to the proof of uniqueness of this process, as stated in Proposition~\ref{prop26}. The guideline is mainly inspired by \cite{HolleyStroock,kl}.

In the proof of Proposition~\ref{prop26} we make use of the following result, which is a standard fact about local martingales.
\begin{proposition}\label{A2}
If $M_t$ is a  local martingale with respect to a filtration $\mc F_t$ and 
\begin{align}\label{dominated}
\bb E \Big[\sup_{0 \leq s \leq t} |M_s|\,\Big]\;<\;+\infty
\end{align}
for any $t\geq 0$, then $M_t$ is a martingale.
\end{proposition}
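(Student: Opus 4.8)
The plan is to exploit the localizing sequence that comes with any local martingale together with the conditional dominated convergence theorem, the hypothesis \eqref{dominated} supplying exactly the uniformly integrable dominating random variable that the argument needs.

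First, since $M_t$ is a local martingale there is a nondecreasing sequence of stopping times $(T_k)_{k\geq 1}$ with $T_k\uparrow +\infty$ almost surely such that, for each $k$, the stopped process $(M_{t\wedge T_k})_{t\geq 0}$ is a (true) martingale with respect to $(\mc F_t)_{t\geq 0}$. In particular, for every $0\leq s\leq t$ and every $k$,
\begin{equation*}
\bb E\big[M_{t\wedge T_k}\,\big|\,\mc F_s\big]\;=\;M_{s\wedge T_k}\,.
\end{equation*}
Next I would let $k\to+\infty$ on both sides. On the right-hand side, because $T_k\uparrow +\infty$ almost surely, for almost every $\omega$ there is $k_0(\omega)$ with $T_k(\omega)>s$ for all $k\geq k_0(\omega)$, whence $M_{s\wedge T_k}\to M_s$ almost surely; the same reasoning gives $M_{t\wedge T_k}\to M_t$ almost surely. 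On the left-hand side, note that $|M_{t\wedge T_k}|\leq \sup_{0\leq r\leq t}|M_r|$ for all $k$, and the latter random variable is integrable by \eqref{dominated}; hence the conditional dominated convergence theorem yields $\bb E[M_{t\wedge T_k}\mid\mc F_s]\to \bb E[M_t\mid\mc F_s]$ almost surely (and in $L^1$). Combining the two limits, $\bb E[M_t\mid\mc F_s]=M_s$ almost surely. Since $M_t$ is adapted and, by \eqref{dominated} again, $\bb E|M_t|\leq \bb E\big[\sup_{0\leq r\leq t}|M_r|\big]<+\infty$ for every $t\geq 0$, this shows that $(M_t)_{t\geq 0}$ is a martingale.

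The only step that genuinely requires attention — and the precise place where hypothesis \eqref{dominated} enters — is the interchange of the limit in $k$ with the conditional expectation: without an integrable dominating random variable this passage can fail, which is exactly why a local martingale need not be a martingale in general. Here the uniform bound $|M_{t\wedge T_k}|\leq \sup_{0\leq r\leq t}|M_r|$ makes conditional dominated convergence directly applicable, so no real obstacle remains and the proof is short.
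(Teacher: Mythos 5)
Your proof is correct and follows essentially the same route as the paper: both arguments take a localizing sequence, invoke the martingale property of the stopped processes, and use the hypothesis \eqref{dominated} as an integrable dominating function to pass to the limit. The only cosmetic difference is that you apply conditional dominated convergence to $\bb E[M_{t\wedge T_k}\mid\mc F_s]$ directly, while the paper tests against indicator functions $\mathbf{1}_A$ with $A\in\mc F_s$ and uses the ordinary dominated convergence theorem; these are equivalent formulations of the same step.
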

\begin{proof}
Let $\tau_n$ be a sequence of stopping times such that $\tau_n\to\infty$ as $n\to \infty$ and such that the stopped process $(M_{t\wedge \tau_n})_{t\geq 0}$ is a martingale for each $n$. Let $s<t$, it then follows that for any $A\in \mc F_s$,
\begin{align*}
\bb E\big[ M_{t\wedge \tau_n}{\bf{1}}_A\big]\;=\; \bb E\big[ M_{s\wedge \tau_n}{\bf{1}}_A\big]\,.
\end{align*}
Letting $n\to\infty$, using \eqref{dominated} and the Dominated Convergence Theorem, we conclude that
\begin{align*}
\bb E\big[ M_{t}{\bf{1}}_A\big]\;=\; \bb E\big[ M_{s}{\bf{1}}_A\big]\,,
\end{align*}
 thus finishing the proof.
\end{proof}

\begin{proof}[Proof of Proposition~\ref{prop26}]
Fix $f\in\mc S_\alpha(\bb R)$ and $s>0$. Recall the definition of the martingales $\mc M_t(f)$ and $\mc N_t(f)$
given in \eqref{eq2.13} and \eqref{eq2.14}, respectively.

We claim that the process $\{X_t^s(f)\,:\,t\geq s\}$ defined by
\begin{equation*}
 X_t^s(f)\;=\;\exp\Bigg\{\frac{1}{2}\int_s^t \| \nabla_\alpha f\|_{\rho_r(\cdot)}^2\,dr + i\Big( \Y_t(f) - \Y_s(f) -\int_s^t\Y_r(\A
f)\,dr\Big)\Bigg\}
\end{equation*}
is a (complex) martingale. By \cite[pp. 148, Proposition 3.4]{ry} it is immediate that $X^s_t(f)$ is a local martingale. Therefore, if we show that
\begin{align}\label{eq39}
\bb E \Big[\sup_{s \leq u \leq t} |X_u^s(f)|\,\Big]\;<\;+\infty\,,
\end{align}
then, by Proposition~\ref{A2}, we conclude that $X^s_t(f)$ is a  martingale. But \eqref{eq39} is a simple consequence of the fact that the function $t\mapsto$ $\frac{1}{2}\int_0^t \| \nabla_\alpha f\|_{\rho_s(\cdot)}^2ds$ is continuous, hence bounded on compact sets. 
 Therefore, the claim is proved.

Fix $S>0$. We claim now that the process $\{Z_t\,:\, 0\leq t\leq S\}$  defined by
\begin{equation*}
 Z_t(f)\;=\;\exp\Big\{ \frac{1}{2}\int_0^t\Vert \B T^\alpha_{S-r} f\Vert^2_{\rho_r(\cdot)}\,dr +i\,\Y_t(T^\alpha_{S-t} f)\Big\}
\end{equation*}
is also a martingale.
To prove this second claim, consider two times $0\leq t_1<t_2\leq S$ and a partition of the interval $[t_1,t_2]$ in $n$
intervals of equal size, that is, $t_1=s_0<s_1<\cdots<s_n=t_2\,,$
with $s_{j+1}- s_j=(t_2-t_1)/n$. Observe now that
\begin{equation*}
\begin{split}
&\prod_{j=0}^{n-1} X_{s_{j+1}}^{s_j}(T^\alpha_{S-s_j}f)\;=\;\exp\Bigg\{  \sum_{j=0}^{n-1} \frac{1}{2}\int_{s_j}^{s_{j+1}} \| \nabla_\alpha T^\alpha_{S-s_j}f\|_{\rho_s(\cdot)}^2\,ds
\\
& +i\,\sum_{j=0}^{n-1}
\Big( \Y_{s_{j+1}}(T^\alpha_{S-s_j}f) - \Y_{s_j}(T^\alpha_{S-s_j}f) -\int_{s_j}^{s_{j+1}}\Y_r(\A
T^\alpha_{S-s_j}f)\,dr\Big)\Bigg\}\,.\\
\end{split}
\end{equation*}
Due to smoothness of $T^\alpha_t f$, the first sum in the exponential above converges to
\begin{equation*}
\frac{1}{2}\int_{t_1}^{t_2}\Vert \B T^\alpha_{S-r}f\Vert^2_{\rho_s(\cdot)} \,dr,
\end{equation*}
as $n\to +\infty$. The second sum inside the exponential is the same as
\begin{align*}
 &\Y_{t_2}(T^\alpha_{S-t_2+\frac{1}{n}}f)-\Y_{t_1}(T^\alpha_{S-t_1}f)\\
 &+ \sum_{j=1}^{n-1}
\Big( \Y_{s_{j}}(T^\alpha_{S-s_{j-1}}f-T^\alpha_{S-s_j}f) -\int_{s_j}^{s_{j+1}}\Y_r(\A
T^\alpha_{S-s_j}f)\,dr\Big).
\end{align*}
Since $\Y\in \mc C([0,T],\mc S_\alpha'(\bb R))$, since $T^\alpha_t f$ is  continuous in time and applying the expansion
$T_{t+\eps}^\alpha f-T^\alpha_t f=\eps \A T_t^\alpha f+o(\eps)$,
one can show  that the almost sure limit of the previous expression is 
$\Y_{t_2}(T^\alpha_{S-t_2}f)-\Y_{t_1}(T^\alpha_{S-t_1}f)$, see \cite{fgn2,fgn2corrigendum} for more details.
We have henceforth deduced that
\begin{align*}
 &\lim_{n\to {+\infty}} \prod_{j=0}^{n-1} X_{s_{j+1}}^{s_j}(T^\alpha_{S-s_j}f)\\
 &=\;\exp\Bigg\{
 \frac{1}{2}\int_{t_1}^{t_2}\Vert \B T^\alpha_{S-r}f\Vert^2_{\rho_s(\cdot)} \,dr+
 i\Big(\Y_{t_2}(T^\alpha_{S-t_2}f)-\Y_{t_1}(T^\alpha_{S-t_1}f)\Big)\Bigg\}\;=\; \frac{ Z_{t_2}}{Z_{t_1}}\,.
\end{align*}
Since the complex exponential is bounded, the Dominated Convergence
Theorem ensures also the
convergence in $L^1$. Thus,
\begin{equation*}
 \bb E\Big[g\,\frac{Z_{t_2}}{Z_{t_1}} \Big] \;=\;\lim_{n\to {+\infty}} \bb E\Big[g\,\prod_{j=0}^{n-1}
X_{s_{j+1}}^{s_j}(T^\alpha_{S-s_j}f) \Big]\,,
\end{equation*}
for any bounded function $g$. Take $g$ bounded and  $\mc F_{t_1}$-measurable. For any $f\in \mc
S_\alpha(\bb R)$, the process $X_t^s(f)$ is a martingale. Thus, taking the
conditional expectation with respect to
$\mc F_{s_{n-1}}$, we get
\begin{equation*}
 \bb E\Big[g\,\prod_{j=0}^{n-1}
X_{s_{j+1}}^{s_j}(T^\alpha_{S-s_j}f) \Big]\;=\;\bb E\Big[g\,\prod_{j=0}^{n-2}
X_{s_{j+1}}^{s_j}(T^\alpha_{S-s_j}f) \Big]\,.
\end{equation*}
By induction, we conclude that
\begin{equation*}
  \bb E\Big[g\,\frac{Z_{t_2}}{Z_{t_1}} \Big]\; =\; \bb E\big[\,g \,\big]\,,
\end{equation*}
for any  bounded and $\mc F_{t_1}$-measurable function $g$. This assures that $\{Z_t\,:\,t\geq 0\}$ is a martingale. From
$\bb E[Z_{t} | \mc F_s] = Z_{s}$, we get
\begin{equation*}
\begin{split}
 &\bb E\Big[\exp\Big\{ \frac{1}{2}\int_0^t\Vert \B T^\alpha_{S-r} f\Vert^2_{\rho_r(\cdot)}\,dr
+i\,\Y_t(T^\alpha_{S-t} f)\Big\}\Big\vert \mc F_s\Big]\\
&= \;\exp\Big\{ \frac{1}{2}\int_0^s\Vert \B T^\alpha_{S-r} f\Vert^2_{\rho_r(\cdot)}\,dr +i\,\Y_s(T^\alpha_{S-s} f)\Big\}\,,
\end{split}
\end{equation*}
which leads to
\begin{equation*}
\bb E\Big[\exp\Big\{ i\,\Y_t(T^\alpha_{S-t}
f)\Big\}\Big\vert \mc F_s\Big] \;=\; \exp\Big\{ -\frac{1}{2}\int_s^t\Vert \B T^\alpha_{S-r} f\Vert^2_{\rho_r(\cdot)}\,dr
+i\,\Y_s(T^\alpha_{S-s} f)\Big\}\,.
\end{equation*}
Choosing $S=t$ and replacing $f$ by $\lambda f$, we achieve
\begin{equation*}
\bb E\Big[\exp\Big\{ i\,\lambda\,\Y_t(f)\Big\}\Big\vert \mc F_s\Big] \;=\; \exp\Big\{ -\frac{\lambda^2}{2}\int_s^{t}\Vert \B
T^\alpha_{t-r} f\Vert^2_{\rho_r(\cdot)}\,dr
+i\,\lambda\,\Y_s(T^\alpha_{t-s} f)\Big\}\,,
\end{equation*}
meaning that, conditionally to $\mc F_s$, the random variable $\Y_t(f)$ has  Gaussian
distribution of mean $\Y_s(T^\alpha_{t-s}f)$ and variance $\int_s^{t}\Vert \B T^\alpha_{r} f\Vert^2_{\rho_s(\cdot)}\,dr$.

 We claim now that 
this last result implies the uniqueness of the finite dimensional distributions of  the process $\{\Y_t(f)\,:\,t\in{[0,T]}\}$. For the sake of clarity, consider only two times, $t_0=0$ and $t_1>0$, two test functions $f_0,f_1\in \mc S_\alpha(\bb R)$ and two Lebesgue measurable sets $A_0$ and $A_1$. By conditioning, 
\begin{align*}
\bb P\Big[\mc Y_{t_1}(f_1)\in A_1,\mc Y_{t_0}(f_0)\in A_0\Big]
\;=\;
\bb E\Big[\,\bb E\big[\mb 1_{[\mc Y_{t_1}(f_1)\in A_1]}\big|\mc F_{0}\big]\cdot \big[\mb 1_{[\mc Y_{t_0}(f_0)\in A_0]}\big]\Big]\,.
\end{align*}
Since the conditional expectation $\bb E\big[\mb 1_{[\mc Y_{t_1}(f_1)\in A_1]}\big|\mc F_{0}\big]$ is a function of $\mc Y_{t_0}(f_1)$ and $\mc Y_{t_0}$  is uniquely distributed as a random element of $\mc S_\alpha'(\bb R)$ (by assumption \textbf{ii)} of Proposition~\ref{prop26}), we get that the distribution of the vector $(\mc Y_{t_1}(f_1),\mc Y_{t_0}(f_0))$ is also uniquely distributed. The generalization for a general finite number of times is straightforward.

 This proves the claim, implying the uniqueness in law of the random element $\Y$ and hence finishing the proof.

\end{proof}

\subsection{Characterization of limit points}
From the results of the previous subsection we know that the sequence $\{\mc Y_t^n:t\in [0,T]\}_{n\in \bb N}$ has limit points.
Let $\{\mc Y_t:t\in [0,T]\}$ be the limit in distribution of   $\{\mc Y_t^n:t\in [0,T]\}_{n\in \bb N}$ along some subsequence $n_k$ considering the uniform topology of $\mathcal{D}([0,T],\mathcal{S}'_{\alpha}(\bb R))$. Abusing of notation, we denote this subsequence simply by $n$. Our goal here is to prove that $\{\mc Y_t:t\in [0,T]\}$ satisfies  the conditions \textbf{i)} and \textbf{ii)} of Proposition~\ref{prop26}. Since Proposition~\ref{convergence at time zero} gives us  condition \textbf{ii)}, it only remains  to prove condition \textbf{i)}.

For  $f\in \mc S_\alpha(\bb R)$, let $\mc M_t$ and $\mc N_t$ be the processes \textit{defined} by
\begin{align*}
\mc M_t(f)&\;=\; \mc Y_t(f) -\mc Y_0(f) -  \int_0^t \mc Y_s(\A f)ds\,,\\
\mc N_t(f)&\;=\;\big(\mc M_t(f)\big)^2 - \int_0^t \| \nabla_\alpha f\|_{\rho_s(\cdot)}^2\,ds\,.
\end{align*}
 Since $\mc Y_t^n$  is assumed to converge in distribution to $\mc Y_t$ as $n\to+\infty$, by \eqref{martdecomp3.4} and\break Lemma~\ref{lemma31aa}, we conclude that $\mc M_t(f)$ defined above coincides with the limit of $\mc M^n_t(f)$ as in Lemma~\ref{lemma32}, which was denoted by $\mc M_t(f)$ as well.

By  Lemma~\ref{lemma32}, we already know that  $\mc M_t(f)$ has  quadratic variation given by $\int_0^t \| \nabla_\alpha f\|_{\rho_s(\cdot)}^2\,ds$. Therefore, if we show that $\mc M_t(f)$ is a martingale, then we will  immediately get that $\mc N_t(f)$ is also a martingale.

Hence, we claim that $\mc M_t(f)$ is a martingale. First of all, we fix the filtration, which will be the natural one: $\mc F_t=\{\sigma(\mc Y_s(g)):s\leq t \text{ and } g\in \mc S_\alpha(\bb R)\}$. Thus, $\mc M_t(f)$ is $\mc F_t$-measurable.
The fact that $\mc M_t(f)$ is in $L^1$ for any time $t\in[0,T]$ is a consequence that $\mc M_t(f)$ is a Gaussian process, which was proved in Lemma~\ref{lemma32}. Thus, if we prove that 
\begin{equation}\label{eq315}
\bb E\big[\mc M_t(f) {\bf 1}_U\big]\;=\; \bb E\big[\mc M_s(f) {\bf 1}_U\big]\,,\quad\forall\, U \in \mc F_s\,,
\end{equation}
we will conclude that $\mc M_t(f)$ is a martingale. To assure \eqref{eq315} it is enough to verify it for sets $U$ of the form 
\begin{align*}
U\;=\; \bigcap_{i=1}^k\big[\mc Y_{s_i}(f_i)\in A_i\big]
\end{align*}
for $0\leq s_1\leq \cdots \leq s_k\leq s$, $f_i\in \mc S_\alpha(\bb R)$ and $A_i$ measurable sets of $\bb R$.
Since $\mc M^n_t(f)$ is a martingale, 
\begin{equation}\label{eq316}
\bb E\big[\mc M_t^n(f) {\bf 1}_{U_n}\big]\;=\; \bb E\big[\mc M_s^n(f) {\bf 1}_{U_n}\big]\,,\quad\forall\, U \in \mc F_s\,,
\end{equation}
where 
\begin{align*}
U_n\;=\; \bigcap_{i=1}^k\big[\mc Y_{s_i}^n(f_i)\in A_i\big]
\end{align*}
for $0\leq s_1\leq \cdots \leq s_k\leq s$, $f_i\in \mc S_\alpha(\bb R)$ and $A_i$ are measurable sets of $\bb R$. Therefore, in order to show \eqref{eq315} it is enough to prove the claim that the expectations in \eqref{eq316} converge to the respective expectations in \eqref{eq315}. 

Since $\mc Y_t^n(f)$ converges to $\mc Y_t(f)$ as $n\to+\infty$, which is concentrated on continuous paths,  then $\mc M_t^n(f) {\bf 1}_{U_n}$ converges in distribution to $\mc M_t(f) {\bf 1}_{U}$. Thus, by
\cite[pp 32, Theorem 5.4]{Bill} in order to get convergence of expectations, it is enough to assure that $\{\mc M_t^n(f) {\bf 1}_{U_n}\}_{n\in\bb N}$ is a uniformly integrable sequence. In its hand, the uniform integrability can be guaranteed by showing that the $L^2$ norm of $\mc M_t^n(f) {\bf 1}_{U_n}$ is uniformly bounded in $n\in\bb N$. Since the indicator function  is bounded by one, we can deal only with the $L^2$ norm of the martingale $\mc M_t^n(f)$. Now, applying the Minkowksi inequality to \eqref{eq:mart_decom}, we get
\begin{equation}\label{eq319}
\begin{split}
\bb E_{\mu_n}\big[\big(\mc M_t^n(f)\big)^2 \big]^{1/2} \;\leq\;& \bb E_{\mu_n}\big[\big(\mc Y_t^n(f)\big)^2 \big]^{1/2}+
\bb E_{\mu_n}\big[\big(\mc Y_0^n(f)\big)^2 \big]^{1/2}\\
& +\bb E_{\mu_n}\Big[\Big(\int_{0}^{t}\pfrac{1}{\sqrt{n}}\sum_{x\in{\bb Z}}n^2\mca_n f\big(\pfrac{x}{n}\big)\overline{\eta}_s(x)\,ds\Big)^2 \Big]^{1/2}\,.
\end{split}
\end{equation}
The first term on the right hand-side of \eqref{eq319} is bounded by
\begin{align*}
\frac{1}{{n}}\sum_{x\in \bb Z}\big(f(\tfrac{x}{n})\Big)^2\chi(\rho^n_t(x))
+\frac{2}{{n}}\sum_{x<y} f(\tfrac{x}{n}) f(\tfrac{y}{n})\varphi^n_t(x,y)\,.
\end{align*}
Since $|\rho_t^n(x)|\leq 1$, the first parcel in the display above is uniformly bounded in $n$. To treat the second term of the last display, we use a similar argument to the one used below \eqref{estimate}.  The second term on the RHS of \eqref{eq319} is bounded by
\begin{align*}
\frac{1}{{n}}\sum_{x\in \bb Z}\big(f(\tfrac{x}{n})\big)^2\chi(\rho^n_0(x))
+\frac{2}{{n}}\sum_{x<y} f(\tfrac{x}{n}) f(\tfrac{y}{n})\varphi^n_0(x,y)\,,
\end{align*}
which is uniformly bounded on $n\in \bb N$ due to conditions \eqref{assumptionone} and \eqref{ass2}.
Again by a similar argument to the one presented for tightness below \eqref{estimate}, the third term on the  right hand-side of \eqref{eq319} is  bounded by $t^2$ times
\begin{equation*}
\frac{1}{{n}}\sum_{x\in \bb Z}\big( f(\tfrac{x}{n})\big)^2\sup_{t\leq{T}}\chi(\rho^n_t(x))
+\frac{2}{{n}}\sum_{x<y} f(\tfrac{x}{n}) f(\tfrac{y}{n})\sup_{t\leq{T}}\varphi^n_t(x,y)\,,
\end{equation*}
thus concluding the characterization of limit points.

\appendix

\section{Auxiliary results on random walks}\label{A1}
The next result is quite classical, but hard to find in the literature. It is  included  here for sake of completeness. 
\begin{proposition}\label{prop:1dwalk}
Let ${X}$ be the symmetric simple one-dimensional continuous time random walk. Then,
\begin{align*}
\int_0^t \bb P\big[\, X_s=0\,\big]\,ds\; \leq\; c\sqrt{t}\,,
\end{align*}
where $c>0$ is a constant which does not depend on $t$.
\end{proposition}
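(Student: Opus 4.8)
The plan is to compute the integral directly using the local central limit theorem (or an explicit formula) for the continuous-time symmetric simple random walk on $\bb Z$. Let $X$ jump at total rate $2$, so that each nearest-neighbour edge is traversed at rate $1$; then $X_t = N^+_t - N^-_t$ where $N^{\pm}$ are independent rate-$1$ Poisson processes, and the one-step distribution after $k$ jumps is that of a sum of $k$ i.i.d. $\pm 1$ variables. First I would recall the well-known heat-kernel bound $\bb P[X_s = 0] \le C/\sqrt{1+s}$, valid for all $s \ge 0$ with a universal constant $C$. This follows either from the local CLT (for large $s$, $\bb P[X_s=0]\sim (4\pi s)^{-1/2}$ by the Gaussian approximation, since $X_s$ has variance $2s$) together with the trivial bound $\bb P[X_s=0]\le 1$ for small $s$; or, even more cleanly, from the Fourier/characteristic-function representation
\begin{equation*}
\bb P[X_s = 0] \;=\; \frac{1}{2\pi}\int_{-\pi}^{\pi} e^{-s(1-\cos\theta)}\cdot 2 \, d\theta \cdot \tfrac12 \;=\;\frac{1}{2\pi}\int_{-\pi}^{\pi} e^{-2s(1-\cos\theta)}\, d\theta\,,
\end{equation*}
which one bounds by splitting near $\theta = 0$ and using $1-\cos\theta \ge c\theta^2$ on $[-\pi,\pi]$, giving $\bb P[X_s=0] \le C/\sqrt{1+s}$.

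Granted this pointwise bound, the proof is immediate: by Tonelli's theorem,
\begin{equation*}
\int_0^t \bb P[X_s = 0]\, ds \;\le\; \int_0^t \frac{C}{\sqrt{1+s}}\, ds \;=\; 2C\big(\sqrt{1+t} - 1\big) \;\le\; 2C\sqrt{t}\,,
\end{equation*}
where the last inequality uses $\sqrt{1+t}-1 \le \sqrt{t}$ for $t \ge 0$ (equivalently $\sqrt{1+t}\le 1+\sqrt t$). Setting $c := 2C$ gives the claim with a constant independent of $t$.

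There is no real obstacle here; the only point requiring a little care is to make the pointwise heat-kernel bound fully rigorous and uniform down to $s=0$, and I would phrase it as the combination ``$\bb P[X_s=0]\le 1$ always, and $\bb P[X_s=0]\le C/\sqrt s$ for $s\ge 1$ by the local CLT or the Fourier estimate,'' which together yield $\bb P[X_s=0]\le C'/\sqrt{1+s}$. If one prefers to avoid invoking the local CLT, the characteristic-function computation above is entirely self-contained and is the route I would actually write out, since the paper elsewhere already appeals to \cite{LawlerLimic2010} for the local CLT and this keeps the appendix short.
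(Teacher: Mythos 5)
Your argument is correct, but it takes a genuinely different route from the paper's. The paper conditions on the (Poisson, parameter $2s$) number of jumps $N$, applies Stirling's formula to bound $2^{-k}\binom{k}{k/2}\le (\pi k)^{-1/2}$, and handles the regime of atypically few jumps ($N\le\lfloor s\rfloor$) with a Chernoff bound $\bb P[N\le\lfloor s\rfloor]\le e^{s(\log 2-1)}$; summing these pieces gives $\bb P[X_s=0]\le e^{-2s}+e^{s(\log 2-1)}+c_1/\sqrt{s}$, which is then integrated. Your approach instead writes the return probability via Fourier inversion, $\bb P[X_s=0]=\frac{1}{2\pi}\int_{-\pi}^{\pi}e^{-2s(1-\cos\theta)}\,d\theta$, bounds $1-\cos\theta\ge c\theta^2$ on $[-\pi,\pi]$, and extends to a Gaussian integral to get $\bb P[X_s=0]\le C/\sqrt{s}$, combining with the trivial bound $\le 1$ near $s=0$. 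Both yield the same pointwise estimate and then integrate; your Fourier route is shorter and self-contained, while the paper's Poissonization route avoids any appeal to Fourier analysis and uses only Stirling and a Poisson tail bound, which is perhaps more in keeping with the elementary tone of the appendix. One small slip: your intermediate expression $\frac{1}{2\pi}\int_{-\pi}^{\pi}e^{-s(1-\cos\theta)}\cdot 2\,d\theta\cdot\frac12$ does not simplify to $\frac{1}{2\pi}\int_{-\pi}^{\pi}e^{-2s(1-\cos\theta)}\,d\theta$; however, the latter is the correct inversion formula for a walk jumping at total rate $2$ (generator $f(x+1)+f(x-1)-2f(x)$, characteristic function $e^{-2s(1-\cos\theta)}$), so the subsequent estimates go through unchanged.
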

\begin{proof}
Let $N:=N_{2s}$ a Poisson distribution with parameter $2s$.
\begin{equation} \label{rw}
\begin{split}
\bb P\big[ X_s=0\big]&=\sum_{k=0}^\infty \bb P\big[ X_k=0 |N=k\big]\cdot \bb P\big[ N=k\big] \\
&=\sum_{k=0}^\infty \textbf{1}_{[k \, \textrm{is even}]}\frac{1}{2^k}\left(
\begin{array}{c}
k\\
k/2
\end{array}
\right)\bb P[N=k]\\
&=e^{-2s}+\sum_{k=1}^{\left\lfloor s\right\rfloor} \textbf{1}_{[k \, \textrm{is even}]}\frac{1}{2^k}\left(
\begin{array}{c}
k\\
k/2
\end{array}
\right)\bb P[N=k]\\
&+\sum_{k=\left\lfloor s\right\rfloor+1}^\infty \textbf{1}_{[k \, \textrm{is even}]}\frac{1}{2^k}\left(
\begin{array}{c}
k\\
k/2
\end{array}
\right)\bb P[N=k]\,.
\end{split}
\end{equation}
Using the Stirling Formula (seefor example Feller, Vol I.), it is easy to check that 
\begin{equation}\label{stirling}
\frac{1}{2^k}\binom{k}{k/2}\;\leq\;  \frac{1}{\sqrt{\pi k}} \;\leq\; 1\,.
\end{equation}
Applying the second inequality of \eqref{stirling} in the first sum of \eqref{rw} and the first inequality of \eqref{stirling} in the second sum in \eqref{rw}, we obtain that $\bb P\big[ X_s=0\big]$ is  bounded from above by
\begin{align} \label{rw1}
& e^{-2s}+ \bb P\big[N\leq \left\lfloor s\right\rfloor\big]+ \frac{c_1}{\sqrt{s}}\sum_{k=\left\lfloor s\right\rfloor+1}^\infty \bb P[N=k]\; \leq\;  e^{-2s}+ \bb P\big[N\leq \left\lfloor s\right\rfloor\big]+ \frac{c_1}{\sqrt{s}}\,.
\end{align}
In the sequel, we will get an exponential bound  $\bb P\big[\,N\leq \left\lfloor s\right\rfloor\,\big]$ by a standard large deviations technique. In this way, note that, for any  $\theta>0$,
\begin{align*}
\bb P\big[\,N\leq \left\lfloor s\right\rfloor\,\big]& \;=\;\bb E\big[\,\textbf{1}_{[N\leq s]}e^{\theta N}e^{-\theta N}\,\big]\;\leq\; e^{\theta s}\,\bb E\big[\,\textbf{1}_{[N\leq s]}e^{-\theta N}\,\big]\\
&\;\leq\; e^{\theta s} \,\bb E\big[\,e^{-\theta N}\,\big]\;=\;e^{\theta s}e^{2s(e^{-\theta} -1)}\;=\; e^{s(2e^{-\theta} -2+ \theta)}\,.
\end{align*}
Denote $f(\theta)=2e^{-\theta} -2 + \theta$ and note that  $f$ assumes its minimum at $\theta_0=\log 2>0$, and $f(\theta_0)=\log 2-1<0$. Therefore, choosing  $\theta=\theta_0$, we get 
$$\bb P\big[\,N\leq\left\lfloor s\right\rfloor\,\big] \;\leq\; e^{s(\log 2 -1)}\,.$$
Looking at \eqref{rw1} and then to \eqref{rw}, we conclude that 
$$\bb P\big[\, X_s=0\,\big]\;\leq\; e^{-2s}+e^{s(\log 2 -1)} + \frac{c_1}{\sqrt{s}}\,.$$
Integrating, we get
\begin{equation*}
\int_0^t \bb P\big[\, X_s=0\,\big]\,ds \;\leq\; \int_0^t \Big(e^{-2s}+e^{s(\log 2 -1)} + \frac{c_1}{\sqrt{s}}\Big)\,ds \;\leq\; c_2\sqrt{t}\,,
\end{equation*}
for some constant $c_2$  not depending on $t$. 

\end{proof}

\begin{proposition}\label{prop:2dwalk} 
Let ${(X,Y)}$ be the symmetric simple two-dimensional continuous time random walk. Then,
\begin{align*}
\int_0^t \bb P\big[\, (X_s,Y_s)=(0,0)\,\big]\,ds\; \leq\; c\log{t}\,,
\end{align*}
where $c>0$ is a constant which does not depend on $t$.
\end{proposition}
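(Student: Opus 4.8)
The plan is to reduce everything to the one-dimensional estimate of Proposition~\ref{prop:1dwalk} via the product structure of the planar walk. A continuous-time symmetric simple random walk on $\bb Z^2$ (say, jumping at total rate $2$, as in the applications) decomposes as $(X_s,Y_s)$, where $X$ and $Y$ are two independent one-dimensional continuous-time symmetric simple random walks, each of which is, up to a harmless time-rescaling, the walk appearing in Proposition~\ref{prop:1dwalk}. Consequently
\begin{equation*}
\bb P\big[(X_s,Y_s)=(0,0)\big]\;=\;\bb P\big[X_s=0\big]\,\bb P\big[Y_s=0\big]\;=\;\big(\bb P[X_s=0]\big)^2\,,
\end{equation*}
so that it suffices to control $\int_0^t \big(\bb P[X_s=0]\big)^2\,ds$.

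Next I would use the pointwise heat-kernel bound already obtained in the proof of Proposition~\ref{prop:1dwalk}: there is a constant $c_1>0$ such that, for all $s>0$,
\begin{equation*}
\bb P[X_s=0]\;\leq\; e^{-2s}+\bb P\big[N\leq\lfloor s\rfloor\big]+\frac{c_1}{\sqrt s}\;\leq\; e^{-2s}+e^{s(\log 2-1)}+\frac{c_1}{\sqrt s}\,,
\end{equation*}
see \eqref{rw1}. Since $\log 2-1<0$, the first two terms decay exponentially in $s$, hence there is a constant $c_2>0$ with $\bb P[X_s=0]\leq c_2/\sqrt s$ for every $s\geq 1$, while trivially $\bb P[X_s=0]\leq 1$ for every $s$. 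Squaring, $(\bb P[X_s=0])^2\leq 1$ on $[0,1]$ and $(\bb P[X_s=0])^2\leq c_2^2/s$ on $[1,\infty)$.

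Finally, splitting the integral,
\begin{equation*}
\int_0^t \bb P\big[(X_s,Y_s)=(0,0)\big]\,ds\;\leq\;\int_0^1 1\,ds+\int_1^t \frac{c_2^2}{s}\,ds\;=\;1+c_2^2\log t\;\leq\; c\log t
\end{equation*}
for all $t$ bounded away from $0$, which is the only regime of interest (in the applications the argument is $tn^2\to\infty$); for $t$ close to $0$ the left-hand side is at most $t$, and the statement is read with this understanding, or with $\log t$ replaced by $\log(2+t)$. There is essentially no serious obstacle here; the only point requiring a little care is the uniformity in $s$ of the $O(s^{-1/2})$ one-dimensional bound together with the elementary bookkeeping of the constants near $s=0$ and $t=0$. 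Alternatively, the whole statement follows at once from the two-dimensional local central limit theorem, exactly as indicated after \eqref{eq:estin01}.
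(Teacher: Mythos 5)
Your proof is correct, and it is a clean way to fill in what the paper leaves as a one-line hint. The paper's appendix says only that the statement "can be adapted from" Proposition~\ref{prop:1dwalk}; the most literal adaptation would be to redo the Poisson-conditioning argument in two dimensions (condition on the number of jumps $N\sim\mathrm{Poisson}(2s)$, use that the discrete-time planar SRW has return probability $\binom{2m}{m}^2/4^{2m}\sim 1/(\pi m)$ by Stirling, split at $N\le\lfloor s\rfloor$, and integrate the resulting $O(1/s)$ pointwise bound). You instead exploit the coordinate decomposition of the planar continuous-time walk into two independent one-dimensional walks, so that $\bb P[(X_s,Y_s)=(0,0)]=(\bb P[X_s=0])^2$, and then square the $O(s^{-1/2})$ pointwise bound that already appears in the proof of Proposition~\ref{prop:1dwalk}. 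This is essentially the same underlying computation, reached by a shorter route; note that the coordinate-decomposition fact is exactly what the paper itself invokes in the main text when pointing to the one-dimensional local CLT of Lawler--Limic as an alternative. Two small remarks: first, you correctly observe that the coordinate walks run at half the rate of the walk in Proposition~\ref{prop:1dwalk}, and that this only changes constants; second, your observation that $c\log t$ cannot literally hold for $t$ near $1$ and should be read as $c\log(2+t)$ (or for $t$ bounded below, as in the applications where the time argument is $tn^2\to\infty$) is a fair point about an imprecision in the statement as written, and your resolution is the intended one.
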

The proof of the statement above can be  adapted from the one of Proposition~\ref{prop:1dwalk}.

\section{Fluctuations  at the initial time}
\begin{proposition} \label{convergence at time zero}Let 
$\nu^n_{\rho_0(\cdot)}$ be the slowly varying Bernoulli product measure  associated with a smooth profile $\rho_0$.
Then, $\mc Y^n_0$ converges in distribution to $\mc Y_0$,
where $\mc Y_0$
 is a mean zero Gaussian field  of  covariance given by \begin{equation}\label{eq:covar1}
\mathbb{E}\Big[ \mc Y_0(g) \mc Y_0(f)\Big] \;=\;  \int_{\mathbb{R}} \chi\big(\rho_0(u)\big)\, g(u) \,f(u)\,  du\,,
\end{equation}
for any $f,g\in\mc S_\alpha(\bb R)$.
\end{proposition}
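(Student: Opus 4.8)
The plan is to exploit that under the slowly varying product measure $\nu^n_{\rho_0(\cdot)}$ the occupation variables $\{\eta_0(x)\}_{x\in\bb Z}$ are independent, so that
\begin{equation*}
\mc Y^n_0(f)\;=\;\frac{1}{\sqrt n}\sum_{x\in\bb Z}f\big(\tfrac xn\big)\,\overline{\eta}_0(x)\,,\qquad \overline{\eta}_0(x)=\eta_0(x)-\rho_0\big(\tfrac xn\big)\,,
\end{equation*}
is a sum of independent, centered random variables, each bounded in absolute value by $2\Vert f\Vert_\infty/\sqrt n$. The strategy is then: (i) prove a central limit theorem for $\mc Y^n_0(f)$ for each fixed $f\in\mc S_\alpha(\bb R)$; (ii) promote this to convergence of all finite-dimensional distributions by the Cram\'er--Wold device, using linearity of $f\mapsto\mc Y^n_0(f)$; (iii) combine with tightness in $\mc S'_\alpha(\bb R)$ to conclude.

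For step (i), first note that by independence $\bb E_{\nu^n_{\rho_0(\cdot)}}[\overline{\eta}_0(x)\overline{\eta}_0(y)]$ equals $\chi(\rho_0(\tfrac xn))$ if $x=y$ and vanishes otherwise, so that $\bb E_{\nu^n_{\rho_0(\cdot)}}[(\mc Y^n_0(f))^2]=\frac1n\sum_{x\in\bb Z}f^2(\tfrac xn)\chi(\rho_0(\tfrac xn))$. This Riemann sum converges to $\int_{\bb R}f^2(u)\chi(\rho_0(u))\,du$; the convergence is justified by continuity and boundedness of $\chi\circ\rho_0$ together with the bound $|f(u)|\le\Vert f\Vert_{0,2}/(1+u^2)$ coming from \eqref{eq2.2}, which controls the tail of the sum and makes the single discontinuity of $f$ at $0$ irrelevant. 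The Lindeberg condition is immediate from the uniform bound $2\Vert f\Vert_\infty/\sqrt n\to 0$ on the summands, so the Lindeberg--Feller theorem yields that $\mc Y^n_0(f)$ converges in distribution to a centered Gaussian with variance $\int_{\bb R}f^2\chi(\rho_0)\,du$. To be fully rigorous one first truncates the sum to $|x|\le Mn$ and checks that the discarded part is negligible in $L^2$ uniformly in $n$, again using the Schwartz-type decay of $f$.

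For step (ii), fix $f_1,\dots,f_k\in\mc S_\alpha(\bb R)$ and $a_1,\dots,a_k\in\bb R$; since $\sum_i a_if_i\in\mc S_\alpha(\bb R)$ and $\mc Y^n_0(\sum_i a_if_i)=\sum_i a_i\mc Y^n_0(f_i)$, step (i) applied to $\sum_i a_if_i$ together with the Cram\'er--Wold theorem shows that $(\mc Y^n_0(f_1),\dots,\mc Y^n_0(f_k))$ converges in distribution to a centered Gaussian vector whose covariance, read off by polarizing the limiting variance, is exactly $\bb E[\mc Y_0(f_i)\mc Y_0(f_j)]=\int_{\bb R}\chi(\rho_0(u))f_i(u)f_j(u)\,du$, i.e.~\eqref{eq:covar1}. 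This simultaneously pins down the finite-dimensional distributions of the candidate limit $\mc Y_0$ and exhibits it as a genuine $\mc S'_\alpha(\bb R)$-valued Gaussian field, the covariance functional $f\mapsto\int_{\bb R}\chi(\rho_0)f^2\,du$ being continuous for the seminorm $\Vert\cdot\Vert_{0,2}$.

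For step (iii), tightness of $\{\mc Y^n_0\}_{n\in\bb N}$ in $\mc S'_\alpha(\bb R)$ follows from the same Mitoma-type argument used in Subsection~\ref{subsec3.5}, specialized to a single time: by Proposition~\ref{mitoma} it suffices to have tightness of the real sequences $\{\mc Y^n_0(f)\}_n$ for each $f\in\mc S_\alpha(\bb R)$, and this is immediate from Chebyshev's inequality and the uniform bound on $\bb E_{\nu^n_{\rho_0(\cdot)}}[(\mc Y^n_0(f))^2]$ established in (i). Convergence of finite-dimensional distributions plus tightness give $\mc Y^n_0\Rightarrow\mc Y_0$ in $\mc S'_\alpha(\bb R)$, which is the claim. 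The only slightly delicate point is bookkeeping rather than conceptual — interchanging the infinite lattice sum with the $n\to\infty$ limit in the Riemann-sum and Lindeberg-truncation steps — and it is handled uniformly by the decay encoded in the seminorms $\Vert f\Vert_{k,\ell}$; everything else is a textbook triangular-array central limit theorem.
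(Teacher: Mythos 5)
Your proof is correct and follows essentially the same route as the paper: both exploit the product structure of $\nu^n_{\rho_0(\cdot)}$ to reduce to a CLT for a triangular array of independent bounded summands, identify the limiting variance via a Riemann-sum argument controlled by the decay seminorms $\Vert\cdot\Vert_{k,\ell}$, pass to finite-dimensional distributions by Cram\'er--Wold, and close via Mitoma's criterion for tightness. The only presentational difference is that you invoke the Lindeberg--Feller theorem as a black box, whereas the paper carries out the corresponding computation by hand, writing $\log\bb E[\exp\{i\theta\mc Y^n_0(f)\}]$ as a sum over $x$ of logarithms of the explicit Bernoulli characteristic functions and Taylor-expanding to extract $-\tfrac{\theta^2}{2n}\sum_x f^2(\tfrac xn)\chi(\rho_0(\tfrac xn))+O(n^{-1/2})$.
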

\begin{proof} As argued  in Subsection~\ref{subsec3.5}, for each $f\in \mc S_\alpha(\bb R)$, the sequence\break $\big\{\mc Y_0(f)\big\}_{n\in \bb N}$ is tight, hence $\big\{\mc Y_0\big\}_{n\in \bb N}$ is  tight due to Mitoma's criterion (Proposition~\ref{mitoma}). Thus, it remains only to characterize the joint limit in distribution for the vectors of the form $\big(\mc Y_0(f_1),\ldots,\mc Y_0(f_k)\big)$, with $f_i\in\mc S_\alpha(\bb R)$, for $i=1,\cdots,k$.
 Since $\nu_{\rho_0}^n$ is a  product measure,
\begin{equation*}
\begin{split}
&\log\mathbb{E}_{\nu_{\rho_0}^n(\cdot)}\Big[\exp\Big\{i\theta\mathcal{Y}^n_{0}(f)\Big\}\Big]
\;=\;\sum_{x\in{\bb Z}}\log\mathbb{E}_{\nu_{\rho_0}^n(\cdot)}\Big[\exp\Big\{\frac{i\theta}{\sqrt{n}}\;
\bar{\eta}_0(x)f\Big(\frac{x}{n}\Big)\Big\}\Big]\\
& =\! \sum_{x\in \bb Z}\log \Big[\rho_0(\pfrac{x}{n})\exp\big\{\pfrac{i\theta}{\sqrt{n}}f(\pfrac{x}{n})\big(1-\rho_0(\pfrac{x}{n})\big)\big\}+\big(1-\rho_0(\pfrac{x}{n})\big)\exp\big\{-\pfrac{i\theta}{\sqrt{n}}f(\pfrac{x}{n})\rho_0(\pfrac{x}{n})\big\}\Big]\!.
\end{split}
\end{equation*}
Since $f\in \mc S_\alpha(\bb R)$, we have smoothness of $f$ except possibly at $x=0$, together with fast decaying. Keeping this in mind,   Taylor's expansion on the exponential function permits  to conclude that the expression above is equal to
\begin{equation*}
-\frac{\theta^2}{2n}\sum_{x\in{\bb Z }}f^{2}\Big(\frac{x}{n}\Big)\chi(\rho_0(\pfrac{x}{n}))+O(\tfrac {1}{\sqrt n})\,,
\end{equation*}
which gives us that
\begin{equation*}\label{eq23aaa}
\lim_{n\rightarrow{{+\infty}}}\log\mathbb{E}_{\nu_{\rho_0}^n(\cdot)}\Big[\exp\Big\{i\theta{\mathcal{Y}^n_{0}(f)}\Big\}\Big]\;=\;-\frac{\theta^2}{2}
\int_{\mathbb{R}}\chi\big(\rho_0(u)\big)\, f^2(u)\,du\,.
\end{equation*}
Replacing $f$ by a
linear combination of functions and then applying the\break Cr\'amer-Wold device, the proof ends.
\end{proof}

\section*{Acknowledgements}

A. N. thanks ``L'OR\' EAL - ABC - UNESCO Para Mulheres na Ci\^encia''. T.F.  and P.G. would like to thank the hospitality of the  Center of Mathematics of the University of Minho in Portugal, where this work was initiated.  This project has received funding from the European Research Council (ERC) under  the European Union's Horizon 2020 research and innovative programme (grant agreement   No 715734). T.F. was supported through a project Jovem Cientista-9922/2015, FAPESB-Brazil. M.T. would like to thank CAPES for a PDSE scholarship, which supported her studies when visiting P.G. in IST, Portugal.

\bibliographystyle{plain}
\bibliography{bibliography}
\end{document}